\theoremstyle{plain}
\newtheorem{theorem}{\bf Theorem}[section]
\newtheorem*{theorem*}{Theorem}
\newtheorem{proposition}[theorem]{\bf Proposition}
\newtheorem{lemma}[theorem]{\bf Lemma}
\newtheorem{corollary}[theorem]{\bf Corollary}
\newtheorem{conjecture}[theorem]{\bf Conjecture}
\newtheorem*{conjecture*}{\bf Conjecture}
\theoremstyle{definition}
\newtheorem{definition}[theorem]{\bf Def\mbox{}inition}
\newtheorem{remark}[theorem]{\bf Remark}
\newtheorem{notation}[theorem]{\bf Notation}
\theoremstyle{remark}
\newtheorem{example}[theorem]{\bf Example}
\theoremstyle{example}
\def\subclassname{{\bfseries Mathematics Subject Classification
(2010)}\enspace}
\def\subclass#1{\par\addvspace\medskipamount{\rightskip=0pt plus1cm
\def\and{\ifhmode\unskip\nobreak\fi\ $\cdot$
}\noindent\subclassname\ignorespaces#1\par}}
\def \HF{{\operatorname{HF}}}
\def \HS{{\operatorname{HS}}}
\def \HV{{\operatorname{h}}}
\def \type{{\operatorname{type}}}
\def \Tor{{\operatorname{Tor}}}
\def \link{{\operatorname{link}}}
\def \lk{{\operatorname{link}}}
\def \lk{{\operatorname{link}}}
\def \G{{\Gamma}}
\def \NN{\mathbb N}
\def \M{\mathcal M}
\def \uno{{\bf 1}}
\def \aaa{{\bf a}}
\def \bbb{{\bf b}}
\def \ZZ{\mathbb Z}
\def \D{\Delta}
\def \DM{\Delta_{\operatorname{max}}}
\def \Dm{\Delta_{\operatorname{min}}}
\def \Ds{^{\operatorname{si\!}}\Delta}%{\!\downarrow}}%
\def \Dp{\Delta^{\prime}} 
\def \F{\mathcal F}
\def \FD{{\mathcal F}(\Delta)}
\def \P{\mathcal P}
\def\AL{\ensuremath{c}}
\begin{document}

\title{$h$-vectors of matroid complexes}
\author{Alexandru Constantinescu}
\address{Institut de Mat\'ematiques, Universit\'e de Neuch\^atel,
 Emile-Argand 1, 
  Neuch\^atel, Switzerland}
\email{\url{alexandru.constantinescu@unine.ch}}
\urladdr{\href{http://math.unibas.ch/institut/personen/profil/profil/person/constantinescu/}{http://math.unibas.ch/institut/personen/profil/profil/person/constantinescu/}} 
\author{ Matteo Varbaro }
\address{ Dipartimento di Matematica,
 Universit\`a  di Genova, Via Dodecaneso 35, Genova 16146, Italy}
\email{\url{varbaro@dima.unige.it}}
\urladdr{\href{http://www.dima.unige.it/~varbaro/}{www.dima.unige.it/~varbaro/}} 
\date{{\small \today}}

 \subjclass[2010]{Primary: 52B05, 05E40; Secondary: 13D40, 13E10, 13F55}

\keywords{Stanley's Conjecture\and Matroid complex\and $h$-vector \and pure O-sequence}
\maketitle
\begin{abstract}
\noindent 
In this paper we partition in classes the set of matroids of fixed dimension on a fixed vertex set. In each class we identify two special matroids, respectively with minimal and maximal $h$-vector in that class. Such extremal matroids also satisfy a long-standing conjecture of Stanley. As a byproduct of this theory we establish Stanley's conjecture in various cases, for example the case of Cohen-Macaulay type less than or equal to $3$.
\end{abstract}

\section*{Introduction}
In 1977 Stanley conjectured that the $h$-vectors of matroids are pure {\it O}-sequences \cite[p.59]{St1},  that is they are $h$-vectors of Artinian monomial level algebras or, equivalently, $f$-vectors of pure order ideals.
Ever since, the $h$-vectors of matroids have been in  the focus of many researchers (see \cite{Hi1,Hi2,Ch, MSSVWZ97,Sw04,Sp09}). 
Pure {\it O}-sequences themselves have attracted a lot of attention as well,  quite a few conjectures being made regarding their shape (\cite{BMMNZ} gives an overview of the topic). 
Although several researchers have approached Stanley's conjecture, to our knowledge  only very specific cases  have been proven. The case of cographic matroids was proven 
in \cite{Ch,Me}, that of lattice path matroids in \cite{Sc} and more generally the one of  cotransversal matroids in \cite{Oh12}.
Low rank and degree situations were recently investigated in  \cite{Sto, Sto09, HSZ}.

In the present paper we prove Stanley's conjecture in several cases, which appear in every rank and codimension. As a particular case, we obtain the conjecture for  all matroid complexes of Cohen-Macaulay type less than or equal to $3$. 
For any positive integers $n$ and $d$, we divide  the $(d-1)$-dimensional matroids on $n$ vertices  in different classes, which are   indexed by the partitions of $n$ with length at least $d$. 
For each class we build the set of all possible $h$-vectors of the duals of the matroids in the respective class. We then 
 identify two special matroids whose duals have minimal, respectively  maximal $h$-vectors in that set.  For all these extremal matroids  we prove in  a constructive way that Stanley's conjecture holds. \\

Our approach passes via an equivalent phrasing of Stanley's conjecture. The $h$-vector of a matroid $\D$ is defined as  the $h$-vector of the corresponding Stanley-Reisner ring and we will denote it by $h_\D$. To a simplicial complex in general, apart from the Stanley-Reisner ideal $I_\D$, one can associate its vertex cover ideal $J(\D)$. 
We will denote the $h$-vector of the quotient ring of $J(\D)$ by $h^{\D}$.
If we denote by $\D^c$ the dual of $\D$ (that is the simplicial complex generated by the complements of the facets in the vertex set), we have that 
\[ J(\D^c) = I_{\D}\quad\quad\textrm{and}\quad\quad  h^{\D^c}=h_\D.\]
A classical theorem of matroid theory says that $\D$ is a matroid if and only if $\D^c$ is a matroid. This implies the following equivalent formulation of  Stanley's conjecture:
\begin{conjecture*}[Stanley]
For any matroid $\D$, the vector $\HV^{\D}$ is a pure $O$-sequence.
\end{conjecture*}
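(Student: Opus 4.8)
The plan is to prove the statement for an arbitrary matroid $\D$ by combining the duality reformulation above with the class decomposition, using the extremal matroids as anchors and an interpolation argument to reach the general member of a class. Throughout I work with $\HV^{\D}$, which by the reformulation equals the ordinary $h$-vector of the dual matroid complex $\D^c$. A useful first move is to make the target concrete via activities: by the standard theory of internal and external activities, the $i$-th entry of $\HV^{\D}$ counts the bases of $\D$ having internal activity $i$ with respect to a fixed linear order on the vertices. Thus proving that $\HV^{\D}$ is a pure $O$-sequence amounts to attaching to each basis $B$ of $\D$ a monomial $m_B$ of degree equal to the internal activity of $B$, in such a way that the $m_B$ together with all their divisors form a pure order ideal (multicomplex), i.e.\ a downward-closed set of monomials whose maximal elements all share the same degree.

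Fix $n$, $d$ and a partition $\lambda\vdash n$ with $\ell(\lambda)\ge d$, and let $\C(\lambda)$ be the associated class. The base case is the two extremal matroids: once the ``staircase'' combinatorial shape of $\Dm(\lambda)$ and $\DM(\lambda)$ is identified, their dual vertex cover ideals are monomial of a very structured type, and one can write down explicit pure order ideals $\M_{\min}$, $\M_{\max}$ realizing $\HV^{\Dm(\lambda)}$ and $\HV^{\DM(\lambda)}$ by a direct computation. The inductive step is to connect an arbitrary $\D\in\C(\lambda)$, whose dual $h$-vector lies componentwise between these two extremes, to one of the extremal matroids by a chain of elementary moves within $\C(\lambda)$ — single basis exchanges that shift one unit of $\HV^{\D}$ between two consecutive coordinates. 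Along this chain one carries not merely the vector but an honest witnessing multicomplex $\M_{\D}$, updating the monomials $m_B$ at each move so that downward-closedness and purity are preserved.

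The hard part is exactly this propagation of purity along a single move. Sandwiching does not suffice, since pure $O$-sequences are not order-convex: being squeezed componentwise between $\HV^{\Dm(\lambda)}$ and $\HV^{\DM(\lambda)}$ gives no information on its own, so the witnessing multicomplex must be maintained by hand. I expect the obstacle to be that the naive re-labelling of the monomials after an elementary exchange destroys either the downward-closed property or the equidimensionality of the top, and that repairing one breaks the other; controlling this is precisely where the matroid exchange axiom, rather than the structure of a general complex, has to enter. The technical heart of a complete proof is therefore a lemma guaranteeing that inside a matroid class every elementary move lifts to a move on pure multicomplexes. This is also the precise point at which the currently available techniques halt, delivering the conjecture for the extremal matroids $\Dm(\lambda)$, $\DM(\lambda)$ and, through a separate direct analysis of the small set of possible dual $h$-vectors, for Cohen--Macaulay type $\type(\D)\le 3$.
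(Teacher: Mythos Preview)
The statement is a conjecture, and the paper does not prove it in general; your proposal is likewise not a proof, as you yourself concede in the final paragraph. So there is no ``paper's own proof'' to compare against for the full statement. What can be compared is the route to the partial results that are actually established.

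Your outline diverges from the paper in two substantive ways. First, the paper never uses internal or external activities, nor any monomial labelling of bases. Its recursive engine is the cover-ideal identity of Lemma~\ref{linkage} and the resulting relation $h^{\D}(k)=h^{\D\setminus v}(k-1)+h^{\lk_{\D}v}(k)$; this lets one compute $h^{\D_t(d,p,\aaa)}$ by induction on $d$ and $p$ and write down an explicit pure order ideal $\Gamma_t(d,p,\aaa)$ witnessing it. (A side remark: $h^{\D}=h_{\D^c}$, so an activity interpretation would naturally live on the bases of $\D^c$, not of $\D$; your phrasing needs care here.)

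Second, and more importantly, the paper does \emph{not} obtain the type $\leq 3$ case by any interpolation or chain-of-moves argument, nor by ``a separate direct analysis of the small set of possible dual $h$-vectors''. The mechanism is structural rigidity. The minimality theorem yields $\type(S/J(\D))\geq p-d+1$, so type $\leq 3$ forces $p\leq d+2$. For $p=d$ and $p=d+1$ every matroid in the class already is some $\D_t$. For $p=d+2$ with type exactly $3$ one passes to the simplified matroid $\Ds$, whose dual is a rank-$2$ matroid, hence a complete multipartite graph $G$; the condition $h_2(G)=3$ forces $G=K_4$, which pins down $\Ds=\D_{d-2}(d,d+2,\mathbf{1})$ and therefore $\D=\D_{d-2}(d,d+2,\aaa)$. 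Thus $\D$ \emph{is} one of the matroids for which the conjecture was already proved --- there is nothing to interpolate.

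Your interpolation scheme is a reasonable alternative programme, and your diagnosis that the sandwich inequality alone is useless (pure $O$-sequences are not order-convex; see Remark~\ref{icp}) is exactly right. But the step you flag as the obstacle --- lifting a single basis exchange to a move of pure multicomplexes --- is not a technical lemma the paper omits; the paper makes no move in that direction, and nothing in the present toolkit suggests such a lemma holds.
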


Let us summarize  the contents of the paper. Section \ref{prelsec} is mainly devoted to preliminary results and  establishing the notation. Nevertheless, we show in Corollary \ref{linkage} an equality involving the cover ideals of certain matroid complexes. This equality supplies the exact sequence \eqref{H}, which will be a crucial tool throughout the paper. 
The existence of this exact sequence depends heavily on the properties of  matroids. 
 In Remark \ref{icp} we also present a counterexample to the Interval Conjecture for Pure {\it O}-sequences formulated  by Boij et al. in \cite{BMMNZ}. 

In Section \ref{sec2},  we first provide some structural results for matroid complexes. We  show that the 1-skeleton of a matroid is a complete $p$-partite graph. 
The division of the matroids into classes will be done in correspondence with these partitions of the vertex set. In each class we then define $d-1$ matroids: $\D_t(d,p,\aaa)$, for $t=0,\ldots ,d-2$, where $\aaa$ is the partition of $n$. All these matroids are representable over fields with ``enough'' elements, and in most cases they are neither graphic nor transversal. 
We will call  $\D_0(d,p,\aaa)$  complete $p$-partite matroids. These  are a simultaneous generalization of both uniform and partition matroids.

Later on in this section, we attach to each matroid $\D$ another matroid $\Ds$, named simplified matroid, of the same dimension but on less vertices. The simplified matroid reflects many properties of the original matroid. For example, the total Betti numbers of $J(\D)$ and $J(\Ds)$ are the same (Proposition \ref{betti}). In Proposition \ref{characterizationcodimension2} we provide  a formula which computes $\HV^{\D}$ for $1$-dimensional matroids. It turns out that the set of $h$-vectors of matroid complexes of the type $(1,2,h_2,\ldots ,h_s)$ coincides with the set of pure $O$-sequences of the form $(1,2,h_2,\ldots ,h_s)$.

In Section \ref{sec3} we prove the conjecture of Stanley in various instances. In  Theorem \ref{stanleyforcomplete} we show that $\HV^{\D}$ is a pure $O$-sequence whenever $\D$ is a $(d-1)$-dimensional complete $p$-partite matroid for some $p\geq d$. 
Using Theorem \ref{stanleyforcomplete}, we prove the more general statement that  $\HV^{\D_t(d,p,\aaa)}$ is a pure $O$-sequence for all $t = 0,\ldots,d-2$ (Theorem \ref{thmdeltat}).

In Section \ref{sec4}, for any partition $\aaa$ of $n$ with $p\geq d$ parts, we denote by $\M(d,p,\aaa)$ the set of $(d-1)$-matroids on $n$ vertices, whose $1$-skeleton is $p$-partite and the cardinalities of the partition sets correspond to $\aaa$. By the results  of Section \ref{sec2}, every matroid belongs to exactly one of these sets. 
In Theorems \ref{minimality} and \ref{maximality}, we show that
\[\HV^{\D_{d-2}(d,p,\aaa)}\leq \HV^{\D}\leq \HV^{\D_0(d,p,\aaa)}, \ \ \forall \ \D\in \M(d,p,\aaa).\]
A priori the existence of a matroid in $\M(d,p,\aaa)$ with minimal $h$-vector is not clear at all. Indeed, a striking consequence is the validity of Stanley's conjecture whenever the Cohen-Macaulay type of $S/I_{\D}$ is less than or equal to three. In other words, we establish Stanley conjecture for all the $h$-vectors of type $(h_0,h_1,\ldots ,h_s)$ with $h_s\leq 3$. 

\vspace{2mm}

\subsection*{Acknowledgement} We would like to thank Thomas Kahle for helpful discussions that led to improvements of the paper.

\section{Preliminaries}\label{prelsec}

In this section we will recall most of the algebraic and combinatorial  notions that we will use throughout the paper. For general aspects on the topics presented below we refer the reader to the books of Stanley \cite{St}, of Bruns and Herzog \cite{BH} and of Oxley \cite{Ox11}.

For a positive integer $n$ denote by $[n]$ the set $\{1,\ldots,n\}$. 
A simplicial complex $\Delta$ on $[n]$ is a collection of subsets of $[n]$ such that $ F \in \Delta$ and $ F ' \subset  F$ imply $F ' \in \Delta$. 
Notice that we are not requiring that $\bigcup_{F\in\D}F=[n]$, therefore $\D$ can be viewed as a simplicial complex on any overset of $\bigcup_{F\in\D}F$. Each element $F\in\D$ is called a \emph{face} of $\D$. 
The dimension of a face $F$ is $|F| -1$ and the \emph{dimension of $\D$} is $\max\{\dim F ~:~ F \in \D\}$. 
A maximal face of $\D$ with respect to inclusion is called a $\emph{facet}$ and  we will denote by $\mathcal{F}(\Delta)$ the set of facets of $\Delta$.  
A simplicial complex is called \emph{pure} if all facets have the same cardinality.
We call a vertex $v$ a \emph{cone point} of $\D$ if $v \in F$ for any $F \in \mathcal{F}(\D)$. If $F_1,\ldots ,F_m$ are subsets of $[n]$, then we denote by $\langle F_1,\ldots ,F_m\rangle$ the smallest simplicial complex on $[n]$ containing them. Explicitly:
\[\langle F_1,\ldots ,F_m\rangle =\{F\subset [n] \ : \ \exists \ i \in\{1,\ldots ,m\}:F\subset F_i\}.\]
We say that $F_1,\ldots ,F_m$ generate the simplicial complex $\langle F_1,\ldots ,F_m\rangle$. Clearly every simplicial complex is generated by its set of facets. For any face $F$  the \emph{link} of $F$ in $\D$ is the following simplicial complex: 
\[\link_{\D}F = \{ F' \in \D ~:~ F' \cup F \in \D \textrm{~and~} F' \cap F = \emptyset\}.\]
For a set of vertices $W\subset [n]$, the \emph{restriction} of $\D$ to $W$ is the following subcomplex of $\D$:
\[ \D|_W = \{F\in\D~:~F\subset W \}.\]
The subcomplex $\D|_W$ is also  called the subcomplex of $\D$ \emph{induced by} the vertex set $W$.
If $F$ is a face of $\D$, then the  \emph{face deletion} of $F$ in $\D$ is $\D \setminus F = \{F' \in\D~:~F\nsubseteq F'\}$. Whenever $F$ is a 0-dimensional face $\{v\}$ we will just write $\D\setminus v$ for the face deletion of $\{v\}$ and $\lk_\D v$ for the link of $\{v\}$. Notice that $\D\setminus v =\D|_{[n]\setminus \{v\}}$ for all $v\in[n]$.
The \emph{dual complex} of $\D$ is the simplicial complex $\D^c$ on $[n]$ with facets
\[ \F(\D^c) = \{[n]\setminus F~:~ F\in \FD\}.\]
For any integer $0 \le k \le \dim\D$, the \emph{$k$-skeleton} of $\D$ is defined as the simplicial complex with facet set $\{F\in\D~:~\dim F = k\}$.

We will now associate to a simplicial complex two square-free monomial ideals. We will then see how these ideals are related via the dual complex. 
Denote by $S=\Bbbk[x_1,\ldots,x_n]$ the polynomial ring in $n$ variables over a field $\Bbbk$. For each subset $F\subset[n]$ define the monomial $\texttt{x}_{F}$ and the prime ideal $\P_F$ as follows:
\begin{eqnarray*}
\texttt{x}_{F} &=& \prod_{i\in F} x_i, \\
\P_F &=& (x_i ~:~ i\in F).
\end{eqnarray*}
The \emph{Stanley-Reisner ideal} of $\D$ is the ideal $I_{\D}$ of $S$  generated by the square-free monomials $\texttt{x}_{F},$ with $F \notin\D$. In particular we have
\[I_\D = (\texttt{x}_{F}~:~F \textrm{~is a minimal nonface of~}\D).\]
The second square-free monomial ideal we can associate to $\D$ is the \emph{cover ideal} of $\D$, namely
\[J(\D) = \bigcap_{F\in\FD}\P_F.\]
The name \lq\lq cover ideal\rq\rq~ comes from the following fact. A collection of vertices $A\subset [n]$ is called a \emph{vertex cover} of $\D$ if $A\cap F \neq \emptyset$ for any $F\in\FD$. A vertex cover $A$ is called \emph{basic} if no proper subset of $A$ is again a vertex cover. It is easy to check that we have
\[J(\D) = (\texttt{x}_A~:~ A \textup{~ is a basic vertex cover of~}\D).\]
It is a well known fact that the prime decomposition of the Stanley-Reisner ideal is $$I_\D = \bigcap_{F\in\FD} \P_{[n]\setminus F}.$$ 
The following equality, which follows directly from the definition, will be very important for the approach of this paper:
  $$J(\D)=I_{\D^c}.$$

We denote by $\Bbbk[\D] = S/I_\D$ the \emph{Stanley-Reisner ring} of $\D$. Let $h_{\Bbbk[\D]}=(h_0,h_1,\ldots,h_s)$ be  its $h$-vector.  If $\HS_{\Bbbk[\D]}(t)$ is the Hilbert series of $\Bbbk[\D]$, then we have
\[ \HS_{\Bbbk[\D]}(t) = \frac{h_0+h_1t + \ldots + h_st^s}{(1-t)^d},\]
where  $h_s\neq 0$ and   $ d = \dim \Bbbk[\D] = \dim\D +1.$

In the classical terminology, the $h$-vector of a simplicial complex is the $h$-vector of its Stanley-Reisner ring. 
As we will mainly deal with cover ideals, in order to avoid the over-use of the word dual, we will fix the following notation and terminology.

\begin{notation} Let $\D$ be any simplicial complex.
\begin{itemize}
\item[1.] We denote the $h$-vector of $\Bbbk[\D]$ by $h_\D$.
\item[2.] We denote the $h$-vector of $S/J(\D)$ by $h^\D$.
\item[3.] We will refer throughout this paper to $h^\D$ as \emph{the $h$-vector of $\D$}.
\end{itemize}
\end{notation}
\noindent Notice that we have the correspondence $h^\D$ = $h_{\D^c}$.

For a $(d-1)$-dimensional simplicial complex $\D$ on $[n]$ such that $S/J(\D)$ is Cohen-Macaulay, we denote by $\type(\D)$ the last total Betti number in the minimal free resolution of $S/J(\D)$, namely 
$$\type(\D)=\beta_d(S/J(\D))=\dim_{\Bbbk}\Tor_d^S(S/J(\D),\Bbbk).$$

Matroid theory was born out of the need to study the concept of dependence in an abstract way. In this paper we will view  matroids as simplicial complexes whose faces correspond to the independent sets. A characteristic of matroids is that they admit many different but equivalent definitions (see \cite{Ox11} and \cite[Chapter III.3]{St}). 
We present here three of them. 
\begin{definition}
A simplicial complex $\D$ is called a \emph{matroid complex} (or just matroid) if one of the following equivalent properties hold:
\begin{itemize}
\item[1.] \emph{The augmentation axiom:} For any two faces $F, G \in \D$ with $|F| < |G|$ there exists $i \in G$ such that $F \cup \{i\} \in \D$.
\item[2.] \emph{The exchange property:} For any two facets $F, G \in \FD$ and for any $i \in F$ there exists a $j \in G$ such that $(F\setminus \{i\}) \cup \{j\} \in \D$.
\item[3.] For any subset $W\subset [n]$ the restriction $\D|_W$ is pure.
\end{itemize}
\end{definition}
\noindent A basic result in matroid theory that we will exploit a lot is the following:
\begin{theorem}[\cite{Ox11}{Theorem 2.1.1}]\label{duality}
A simplicial complex $\D$ on $[n]$ is a matroid if and only if $\D^c$ is a matroid.
\end{theorem}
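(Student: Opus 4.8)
The plan is to prove only the implication \lq\lq $\D$ is a matroid $\Rightarrow$ $\D^c$ is a matroid\rq\rq; the converse then follows at once, since $F\mapsto[n]\setminus F$ is an inclusion-reversing involution and any simplicial complex is recovered from its set of facets, so $(\D^c)^c=\D$. Assume $\D$ is a matroid; in particular it is pure, say of dimension $d-1$, so every facet of $\D^c$ has cardinality $n-d$ and $\D^c$ is pure as well. It thus suffices to verify for $\D^c$ the exchange property (property~2 of the definition).

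I would carry this out by translating the exchange property for $\D^c$ into a statement about $\D$. Fix facets $B^c=[n]\setminus B$ and $C^c=[n]\setminus C$ of $\D^c$, with $B,C\in\FD$, and an element $i\in B^c$. If $i\in C^c$ one takes $j=i$ and there is nothing to prove, so assume $i\in C$, i.e. $i\in C\setminus B$. A direct computation with complements shows that, for $j\in B$, the set $(B^c\setminus\{i\})\cup\{j\}$ is a facet of $\D^c$ exactly when $(B\cup\{i\})\setminus\{j\}\in\FD$, while the condition $j\in C^c$ becomes $j\notin C$. Hence the whole statement reduces to the following \lq\lq co-exchange\rq\rq\ property of $\D$: \emph{for all $B,C\in\FD$ and all $i\in C\setminus B$ there is $j\in B\setminus C$ with $(B\cup\{i\})\setminus\{j\}\in\FD$.}

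To prove this I would restrict $\D$ to $W=B\cup\{i\}$, a vertex set of size $d+1$. The face $B$ is a facet of $\D|_W$ (a strictly larger face of $\D|_W$ would be a strictly larger face of $\D$ containing $B$), so by property~3 of the definition $\D|_W$ is pure of dimension $d-1$. Now $(B\cap C)\cup\{i\}$ lies in $C$ and in $W$, so it is a face of $\D|_W$, and it extends to a facet $F$ of $\D|_W$ with $|F|=d$ and $F\subseteq B\cup\{i\}$. Since $i\notin B$ we get $i\in F$ and $F\setminus\{i\}\subseteq B$ with $|F\setminus\{i\}|=d-1$, hence $F\setminus\{i\}=B\setminus\{j\}$ for a unique $j\in B$, and $j\notin C$ because $B\cap C\subseteq F\setminus\{i\}$. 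Then $F=(B\cup\{i\})\setminus\{j\}\in\D|_W\subseteq\D$ has cardinality $d=\dim\D+1$, so $F\in\FD$ by purity; this $j$ proves the co-exchange property, and unwinding the reduction gives $(B^c\setminus\{i\})\cup\{j\}\in\F(\D^c)$ with $j\in C^c$. The only delicate point is the bookkeeping across complementation — the reformulation of the axiom and the disposal of the trivial case $i\in C^c$; once the co-exchange property is isolated, passing to the restriction $\D|_{B\cup\{i\}}$ and invoking purity (property~3) finishes the argument, with no need for circuits or rank functions.
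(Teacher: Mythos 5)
Your proof is correct, but it is not ``the same approach as the paper'' for the simple reason that the paper offers no proof at all: Theorem \ref{duality} is quoted from Oxley \cite{Ox11}, where the standard argument is run inside matroid theory proper (bases, rank, and the dual rank function), not in the simplicial-complex language used here. Your argument is a genuinely self-contained alternative: you check purity of $\D^c$, translate its exchange property through complementation into the ``co-exchange'' statement for $\D$ (for $B,C\in\mathcal{F}(\D)$ and $i\in C\setminus B$ there is $j\in B\setminus C$ with $(B\cup\{i\})\setminus\{j\}\in\mathcal{F}(\D)$), and prove that by restricting to $W=B\cup\{i\}$ and invoking the restriction-purity characterization (property 3 of the paper's definition). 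All the steps check out: the complements of the facets of $\D$ form an antichain, so they are exactly the facets of $\D^c$ and $(\D^c)^c=\D$, which legitimizes deducing the converse from the forward implication; the complementation bookkeeping $[n]\setminus\bigl((B^c\setminus\{i\})\cup\{j\}\bigr)=(B\cup\{i\})\setminus\{j\}$ is right; and in the restriction $\D|_W$ the face $(B\cap C)\cup\{i\}$ does extend to a $d$-element facet forced to be $(B\cup\{i\})\setminus\{j\}$ with $j\in B\setminus C$. Two cosmetic remarks: the phrase ``the whole statement reduces to'' the co-exchange property is an overstatement (the dual exchange could also be satisfied trivially with $j\in C^c\cap B^c$); all you need, and all you use, is that co-exchange suffices. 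Likewise, $i\in F$ follows directly from $F\supseteq(B\cap C)\cup\{i\}$, not from $i\notin B$. What your route buys is an elementary, purely combinatorial verification entirely within the paper's own definitions; what the citation buys is brevity and access to the surrounding dual-matroid machinery (e.g.\ the dual rank function) that the paper does not otherwise need.
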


An algebraic characterization of matroid complexes has been given in \cite{MT} and \cite{Va}, namely a simplicial complex $\D$ is a matroid if\mbox{f} all the symbolic powers of $I_\D$  are Cohen-Macaulay. 
Another algebraic property that will be important for us (even if it does not characterize matroids) is the following:
the Stanley-Reisner ring of a matroid is level (\cite[Chapter III, Theorem 3.4]{St}). This means  that $\Bbbk[\D]$ is Cohen-Macaulay and the socle of its Artinian reduction lies in exactly one degree. A prototype of level algebras are the Gorenstein algebras, which correspond to socle dimension $1$.
An important consequence of  $S/J(\D)$ being level is that the type can be expressed only in terms of the last entry of the $h$-vector, namely  $$\type(\D)=\HV^{\D}(s) \mbox{ \ \ \ where \ }s=\max\{i~:~\HV^{\D}(i)\neq 0\}.$$

The following lemma is important because it provides  a recursive formula for the $h$-vectors. This formula will be a main  ingredient in many of our proofs. The lemma itself can also be interpreted from a liaison-theoretical point of view, namely it is easy to check that the ideal relation provides a basic double link. 

\begin{lemma}\label{linkage}
If $\D$ is a matroid on $[n]$ and  $v\in\D$ is a vertex that is not a cone point, then
\[J(\D)=x_vJ(\D\setminus v)+J(\lk_\D v).\]
\end{lemma}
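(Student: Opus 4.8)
The plan is to prove the set-theoretic equality of ideals directly, inclusion by inclusion, using the description of $J(\D)$ as the intersection of the prime ideals $\P_F$ over the facets $F$ of $\D$, together with the combinatorial behaviour of facets under link and deletion in a matroid. First I would recall the two basic facts: the facets of $\lk_\D v$ are exactly the sets $G$ with $v\notin G$ and $G\cup\{v\}\in\FD$, and — crucially, using the matroid hypothesis — that every facet of $\D\setminus v$ is a facet of $\D$. The latter holds because in a matroid all facets have the same cardinality $d$, and $\D\setminus v = \D|_{[n]\setminus\{v\}}$; a facet of $\D\setminus v$ is an independent set of maximal size not containing $v$, and by the augmentation axiom applied inside the restriction it must already have size $d$, hence be a facet of $\D$. (Here we use that $v$ is not a cone point, so such facets exist.) Thus $\FD$ splits as the disjoint union of $\F(\D\setminus v)$ and the set of facets containing $v$, and the latter are precisely $\{G\cup\{v\} : G\in\F(\lk_\D v)\}$.

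Given this, the containment $\supseteq$ is the easy direction. For $x_vJ(\D\setminus v)$: if $g\in J(\D\setminus v)=\bigcap_{F\in\F(\D\setminus v)}\P_F$, then for every facet $F$ of $\D$ not containing $v$ we have $g\in\P_F$, and for every facet containing $v$ we have $x_v\in\P_F$; hence $x_vg\in\bigcap_{F\in\FD}\P_F=J(\D)$. For $J(\lk_\D v)=\bigcap_{G\in\F(\lk_\D v)}\P_G$: an element here lies in $\P_G\subseteq\P_{G\cup\{v\}}$ for every facet of $\D$ containing $v$, and it lies in $\P_F$ for every facet $F$ not containing $v$ as well, since such an $F$ can be extended (augmentation axiom again, moving $v$ in) to meet... actually the clean way is: $\lk_\D v$ as a complex on $[n]\setminus\{v\}$ has the property that every facet of $\D\setminus v$ contains a facet of $\lk_\D v$? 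That is not literally true, so instead I would argue $J(\lk_\D v)\subseteq\P_F$ for $F\in\F(\D\setminus v)$ by noting $J(\lk_\D v)$ is generated by monomials $\texttt{x}_A$ for basic vertex covers $A$ of $\lk_\D v$, and such an $A$ meets every facet of $\D$ through $v$; one checks $A$ (or $A\cup\{v\}$) meets every facet of $\D$, giving $\texttt{x}_A\in J(\D)$.

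The containment $\subseteq$ is the main point and where the matroid structure does real work. Take $f\in J(\D)=\bigcap_{F\in\FD}\P_F$; I want to write $f=x_vh+k$ with $h\in J(\D\setminus v)$ and $k\in J(\lk_\D v)$. Write $f=x_vf_1+f_0$ where $f_0$ is the part of $f$ with no $x_v$. Since $f\in\P_G$ for every facet of $\D$ not containing $v$ — and such $\P_G$ does not involve $x_v$ — we get $f_0\in\P_G$ for all $G\in\F(\D\setminus v)$, i.e. $f_0\in J(\D\setminus v)$; but $f_0$ also needs to sit in $J(\lk_\D v)$... this is where one has to be careful, and the genuinely nontrivial input is an equality relating the facets: I would use that for a matroid, $\F(\D\setminus v)$ and $\F(\lk_\D v)$ are linked by the exchange property, so that the minimal primes of $J(\D\setminus v)$ not already "seen" by $J(\lk_\D v)$ are controlled. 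Concretely the cleanest route is to prove the reverse inclusion of radical ideals / primary-free monomial ideals by comparing associated primes: both sides are radical monomial ideals, so it suffices to show they have the same minimal primes, which by the intersection formula reduces to the disjoint-union decomposition $\FD=\F(\D\setminus v)\sqcup\{G\cup\{v\}:G\in\F(\lk_\D v)\}$ established in the first paragraph, plus the elementary identity $(x_v)+\bigcap_i\P_{G_i}=\bigcap_i((x_v)+\P_{G_i})=\bigcap_i\P_{G_i\cup\{v\}}$ when no $G_i$ contains $v$, and the fact that $x_v J(\D\setminus v)+J(\lk_\D v)$ and $J(\D\setminus v)\cap(x_v)+J(\lk_\D v)\cap(\dots)$ agree here because $v$ is not a cone point guarantees neither summand is the zero ideal and the colon relations $J(\D):x_v = J(\D\setminus v)$ hold.

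The step I expect to be the real obstacle is making the last inclusion rigorous without circularity: showing $J(\D)\subseteq x_vJ(\D\setminus v)+J(\lk_\D v)$, i.e. that an arbitrary element of the intersection decomposes. The honest argument, which I would carry out, is: (i) prove $\F(\D)=\F(\D\setminus v)\sqcup\{\{v\}\cup G: G\in\F(\lk_\D v)\}$ from the augmentation axiom and $v$ not being a cone point; (ii) deduce $J(\D)=J(\D\setminus v)\cap\bigl(\bigcap_{G\in\F(\lk_\D v)}\P_{\{v\}\cup G}\bigr) = J(\D\setminus v)\cap\bigl((x_v)+J(\lk_\D v)\bigr)$ using that intersection of primes distributes over the sum with $(x_v)$ since none of the $G$ contain $v$; (iii) finally invoke the monomial-ideal identity $A\cap((x_v)+B)=x_v(A:x_v)+ (A\cap B)$ valid for monomial ideals $A,B$ not involving... and here one uses $A:x_v = J(\D\setminus v)$ (immediate from the decomposition in (i)) and $A\cap B = B = J(\lk_\D v)$ because $J(\lk_\D v)\subseteq J(\D\setminus v)$, which in turn follows from every facet of $\D\setminus v$ being a facet of $\D$ hence meeting... — so the chain closes, and the only place matroid-ness enters essentially is step (i).
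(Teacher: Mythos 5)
Your route is genuinely different from the paper's: the paper proves the identity by duality, showing $(\D\setminus v)^c=\lk_{\D^c}v$ and $(\lk_{\D}v)^c=\D^c\setminus v$ and then transferring the standard Stanley--Reisner identity $I_{\Gamma}=x_vI_{\lk_{\Gamma}v}+I_{\Gamma\setminus v}$ to $\Gamma=\D^c$, whereas you manipulate the prime decomposition of $J(\D)$ directly. The skeleton of your argument is viable: step (i), the decomposition $\F(\D)=\F(\D\setminus v)\sqcup\{G\cup\{v\}:G\in\F(\lk_\D v)\}$, is correct (purity of $\D\setminus v$ in the same dimension as $\D$ is exactly what makes every facet of $\D\setminus v$ a facet of $\D$); step (ii), $J(\D)=J(\D\setminus v)\cap\bigl((x_v)+J(\lk_\D v)\bigr)$, follows because no facet of $\lk_\D v$ contains $v$; and the monomial-ideal identity in step (iii) does reduce everything to the two facts $J(\D\setminus v):x_v=J(\D\setminus v)$ (clear, since $x_v$ appears in no generator) and $J(\lk_\D v)\subseteq J(\D\setminus v)$.

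The genuine gap is that last containment. You assert it ``follows from every facet of $\D\setminus v$ being a facet of $\D$,'' but that is not enough: for a facet $F$ of $\D\setminus v$, the inclusion $J(\lk_\D v)=\bigcap_{G\in\F(\lk_\D v)}\P_G\subseteq\P_F$ holds if and only if some facet $G$ of $\lk_\D v$ is contained in $F$ --- which is exactly the statement you dismissed earlier with ``that is not literally true.'' It is false for general complexes but true for matroids, and it is a second, independent place where the matroid axiom must be invoked: apply augmentation repeatedly to the faces $\{v\}$ and $F$ to extend $\{v\}$ to a $d$-element face $\{v\}\cup G$ with $G\subseteq F$; then $\{v\}\cup G$ is a facet, so $G\in\F(\lk_\D v)$ and $\P_G\subseteq\P_F$. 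Your closing claim that ``the only place matroid-ness enters essentially is step (i)'' is therefore wrong, and the paper's own counterexample shows it: for the path on vertices $1,2,3,4$ with edges $\{1,2\},\{2,3\},\{3,4\}$ and $v=1$, your decomposition (i) holds ($\F(\D\setminus 1)=\{\{2,3\},\{3,4\}\}$ and $\F(\lk_\D 1)=\{\{2\}\}$), yet $J(\lk_\D 1)=(x_2)\not\subseteq\P_{\{3,4\}}$ and the lemma fails. Note also that your ``easy direction'' $J(\lk_\D v)\subseteq J(\D)$ is this same containment in disguise, so the hand-wave there about basic covers needs to be replaced by the same augmentation argument. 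With that one fact supplied, your proof closes.
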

\begin{proof} We will first make a general observation. Let $\Gamma$ be a simplicial complex on $[n]$ and  consider $\Gamma\setminus n$ and $\lk_{\Gamma}n$ as simplicial complexes on $[n-1]$. 
It is straightforward to show that, if  $\Gamma\setminus n$ is pure and  of the same dimension as $\Gamma$, then $(\Gamma\setminus n)^c=\lk_{\Gamma^c} n$.

We may obviously assume that $v = n$. 
Since $\D$ is a matroid, $\D\setminus n$ is pure. Since $n$ is not a cone point,   it has the same dimension as $\D$.  Therefore, by the above observation we have
\[(\D\setminus n)^c=\lk_{\D^c} n.\]
As we assumed that $n\in\D$, we have that $n$ is not a cone point of $\D^c$. By Theorem \ref{duality} and the general observation we obtain also that
\[\D^c\setminus n = ((\D^c\setminus n)^c)^c = (\lk_{(\D^c)^c} n)^c=(\lk_{\D} n)^c.\]

For all simplicial complexes $\Gamma$ on $[n]$ we have the following  equality for Stanley-Reisner ideals:
\[I_{\Gamma}=x_nI_{\lk_{\Gamma}n}+I_{\Gamma\setminus n}.\]
Exploiting it for $\Gamma=\D^c$,  together with $I_{\D^c}=J(\D)$ and  all the above observations, we conclude.
\end{proof}
\begin{remark}Whereas the equality $I_{\D}=x_nI_{\lk_{\D}n}+I_{\D\setminus n}$ holds true for any simplicial complex, the equality $J(\D)=x_vJ(\D\setminus v)+J(\lk_\D v)$ depends strongly on the fact that $\D$ is a matroid. 
For instance,  Lemma \ref{linkage}  already fails for any vertex of a path of length three.
\end{remark}
\begin{remark}
From the point of view of Gorenstein liaison, Lemma \ref{linkage}  implies that the ideals $J(\D)$ can be linked to a complete intersection. A more general statement in this direction has been proven  by Nagel and R\"omer in \cite{NaRo}. 
\end{remark}

\begin{remark}
Lemma \ref{linkage} gives rise to the exact sequence
\[0\longrightarrow J(\lk_{\D}v)(-1)\longrightarrow J(\D\setminus v)(-1)\oplus J(\lk_{\D}v) \longrightarrow J(\D) \longrightarrow 0.\]
The above exact sequence yields the following relation for the Hilbert functions:
\[\HF_{J(\D)}(k)=\HF_{J(\D\setminus v)}(k-1)+\HF_{J(\lk_{\D}v)}(k)-\HF_{J(\lk_{\D}v)}(k-1) \ \ \ \forall \ k\in \ZZ,\]
which in turn yields     
\[\HF_{S/J(\D)}(k)=\HF_{S/J(\D\setminus v)}(k-1)+\HF_{S/J(\lk_{\D}v)}(k)-\HF_{S/J(\lk_{\D}v)}(k-1) \ \ \ \forall \ k\in \ZZ.\]
Eventually, taking differences, for every matroid $\D$ and every $v\in \D$ that is not a cone point we obtain
\begin{equation}\label{H}
\HV^\D(k)=\HV^{\D\setminus v}(k-1)+\HV^{\lk_{\D}v}(k)\ \ \ \forall \ k\in \ZZ.
\end{equation}
Formula \eqref{H} will be crucial throughout the paper.
\end{remark}

An \emph{order ideal} is a finite collection $\G$ of monomials of some standard graded polynomial ring, such that $M\in\G$ and $N$ divides $M$ imply $N\in\G$. The partial order given by the divisibility of monomials gives $\G$ a poset structure. An order ideal is called \emph{pure} if all maximal monomials have the same degree. To every order ideal $\G$ we   associate its $f$-vector $f(\G) = (f_0(\G),\ldots, f_s(\G))$, where for every $i=0,\ldots,d$ we have
\[f_i(\G)= |\{ M \in \G~:~ \deg(M)=i\}|.\]
A \emph{pure $O$-sequence} is a vector $h=(h_0,\ldots,h_s)$ that can be obtained as the $f$-vector of some pure order ideal.

\begin{remark}
Pure $O$-sequences can also be presented as the  $h$-vectors of Artinian monomial level algebras, i.e. Artinian level algebras $A$ which are isomorphic to $R/I$ for some polynomial ring $R$ and some monomial ideal $I\subset R$. 
It is very easy to see that, in this situation, if $A$ is Gorenstein then $I$ is forced to be a complete intersection. So the pure $O$-sequences of type $(h_0,h_1,\ldots ,h_{s-1},1)$ are well understood: they are $h$-vectors of complete intersections. In particular,  it emerges that pure $O$-sequences are much more special than $h$-vectors of level algebras in general. 

However, already a characterization of pure $O$-sequences of the type $(h_0,h_1,\ldots ,h_{s-1},2)$, i.e. when the Artinian monomial level algebra $A$ has  Cohen-Macaulay type $2$, is not known (see  \cite{BMMNZ}).
\end{remark}

In \cite{St1} Stanley phrased his conjecture in terms of the $h$-vector of the Stanley-Reisner ring. By Theorem \ref{duality}  an equivalent statement is the following:
\begin{conjecture}[Stanley]\label{conjd}
If $\D$ is a matroid, then the $h$-vector of $S/J(\D)$ is a pure $O$-sequence.
\end{conjecture}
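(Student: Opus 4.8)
The plan is to establish Conjecture \ref{conjd} by induction on the number of vertices, with the recursion \eqref{H} furnishing the inductive step. First I would reduce to connected matroids. If $\D$ decomposes as a join $\D_{1}\ast\D_{2}$ on disjoint vertex sets, then $\D^{c}=\D_{1}^{c}\ast\D_{2}^{c}$, so $S/J(\D)$ is a tensor product and $\HV^{\D}$ is the coefficientwise product of $\HV^{\D_{1}}$ and $\HV^{\D_{2}}$. The product of two pure $O$-sequences is again one — multiply together pure order ideals placed in disjoint sets of variables — so it suffices to treat connected matroids. For a connected matroid on at least two vertices there are no coloops and hence no cone points, so \eqref{H} is available at every vertex $v$, and $\D\setminus v$ and $\lk_{\D}v$ are matroids on fewer vertices to which the inductive hypothesis applies.

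Fix such a $v$. By induction $\HV^{\D\setminus v}$ and $\HV^{\lk_{\D}v}$ are pure $O$-sequences, and \eqref{H} reads
\[\HV^{\D}(k)=\HV^{\D\setminus v}(k-1)+\HV^{\lk_{\D}v}(k).\]
The goal is to produce a single pure order ideal $O$ realizing $\HV^{\D}$, organized so that its recursive structure mirrors \eqref{H}: writing $A=\{M\in O:x_{0}\nmid M\}$ for the slice of $O$ in a distinguished variable $x_{0}$ and $B=\{M:x_{0}M\in O\}$ for its $x_{0}$-derivative, both $A$ and $B$ are order ideals with $B\subseteq O$, and one would want $f(A)=\HV^{\lk_{\D}v}$ and $f(B)=\HV^{\D\setminus v}$. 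The tension surfaces at once at the level of cardinalities: since $S/J(\D)=\Bbbk[\D^{c}]$ is Cohen--Macaulay, $\sum_{k}\HV^{\D}(k)$ equals the number $b(\D)$ of bases of $\D$, and summing \eqref{H} recovers the matroid identity $b(\D)=b(\D\setminus v)+b(\lk_{\D}v)$. Thus $|A|$ and $|B|$ are these two summands, with no inclusion between them in general; were $x_{0}$ to occur only to the first power one would be forced to have $B\subseteq A$, which fails precisely when the deletion has more bases than the contraction. Hence the naive guess $O=x_{0}\cdot(\text{deletion ideal})\sqcup(\text{contraction ideal})$ is \emph{not} an order ideal, and the construction cannot be carried out abstractly: the class of pure $O$-sequences is not closed under the operation $(a,b)\mapsto c$ with $c(k)=a(k-1)+b(k)$, so the matroid structure must enter in an essential way, exactly as the remark after Lemma \ref{linkage} anticipates.

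To build $O$ genuinely I would abandon the free new variable and instead realize $\HV^{\D}$ directly from the combinatorics of $\D^{c}$, indexing the monomials of $O$ by the bases of $\D^{c}$ through their external/internal activity (the Björner interpretation of the matroid $h$-vector), and then choosing for each basis a monomial in its passive elements so that the whole collection is downward closed under divisibility with all maximal monomials in the single top degree forced by the levelness of $\Bbbk[\D^{c}]$. The recursion \eqref{H} would then be used to propagate such a choice from $\D\setminus v$ and $\lk_{\D}v$ up to $\D$, the point being to select the two smaller order ideals \emph{coherently} — the contraction ideal sitting inside the deletion ideal along the distinguished direction — rather than merely to know that they exist. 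The hard part, and the reason the conjecture is genuinely difficult, is precisely this coherence: reconciling the opposing cardinalities of deletion and contraction inside one downward-closed, pure monomial family. I expect that pushing the induction through will require strengthening the inductive statement to carry a canonical, activity-based order ideal (and possibly invoking a representation of $\D$ over a sufficiently large field, as the extremal matroids of this paper suggest), so that the gluing dictated by \eqref{H} automatically preserves both downward-closure and purity.
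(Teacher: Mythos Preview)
The statement you are attempting to prove is \emph{Conjecture} \ref{conjd}; the paper does not prove it in general. It establishes only special cases (the matroids $\D_t(d,p,\aaa)$ in Theorems \ref{stanleyforcomplete} and \ref{thmdeltat}, and the consequence for Cohen--Macaulay type at most $3$). So there is no ``paper's own proof'' to compare against, and any complete argument would resolve a long-standing open problem.

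Your proposal is not a proof but a programme, and you are candid about this. The genuine gap is exactly where you locate it: after correctly observing that the class of pure $O$-sequences is not closed under $(a,b)\mapsto c$ with $c(k)=a(k-1)+b(k)$, you propose to repair the induction by constructing the order ideals for $\D\setminus v$ and $\lk_{\D}v$ \emph{coherently}, via some activity-based labelling of bases, so that they glue along a distinguished variable. But you give no argument that such coherent choices exist, let alone that they propagate through the recursion; the sentences beginning ``I expect that pushing the induction through will require\ldots'' are an acknowledgement that the key step is missing. Strengthening the inductive hypothesis to carry a canonical order ideal is precisely the difficulty: no such canonical construction is known, and the activity interpretation of the $h$-vector, while it does index the bases by degree correctly, has not been shown to yield a pure order ideal in general.

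For comparison, the paper's successful special cases also rest on \eqref{H}, but they avoid the coherence problem by working only with the explicit families $\D_t(d,p,\aaa)$, for which the links and deletions stay inside the same family and an explicit order ideal $\Gamma_t(d,p,\aaa)$ can be written down and verified to satisfy the same recursion. That is, the paper succeeds by restricting to matroids where the ``coherent gluing'' can be done by hand, not by finding a mechanism that works uniformly.
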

Conjecture \ref{conjd} is known for some families; we list here the most general of them. 
\begin{itemize}
\item[1.] When $S/J(\D)$ is Gorenstein, see \cite[Theorem 4.4.10]{Sto}. 
\item[2.] When $\HV^{\D}=(1,h_1,h_2,h_3)$, see \cite{Sto} and \cite{HSZ}. 
\item[3.] When $\D$ is a graphic matroid, see \cite{Me}.
\item[4.] When $\D$ is a transversal matroid, see \cite{Oh12}.
\item[5.] When the dual of $\D$ is a paving matroid, see \cite{MNRV}. This corresponds to the case in which $h_i=\displaystyle \binom{v+i-1}{i}$ for all $i<s$, where $v$ is the number of $0$-dimensional faces of $\D^c$.
\item[6.] When $\HV^{\D}=(1,2,h_2,\ldots ,h_s)$. Indeed, one can see by the Hilbert-Burch theorem that, in the height $2$ case, pure $O$-sequences coincide with $h$-vectors of level algebras (see \cite[Proposition 4.5]{BMMNZ} for the precise proof), so one can deduce the validity of the conjecture in this case by \cite[Chapter III, Theorem 3.4]{St}.
\end{itemize}

Computational experiments  using the computer algebra system CoCoa \cite{cocoa} were an important part in the preparation of this work.
In our investigation, we  found a counterexample to the Interval  Conjecture for Pure {\it O}-sequences (see \cite{BMMNZ}):
\begin{remark}\label{icp}
One can check that the vectors $(1, 4, 10, 13, 12, 9, 3)$ and $(1, 4, 10, 13, 14, 9, 3)$ are pure $O$-sequences. Indeed, the order ideals are generated by $\{x^3y^2z, \ x^3yt^s2, \ x^3z^2t\}$,  respectively by $\{x^4y^2, \ x^3yzt, \ x^2z^2t^2\}$. Looking at all possible choices of three monomials of degree $6$ in $4$ variables, 
it is possible to compute all the pure $O$-sequences of the form $(1,4,h_2,\ldots ,$ $h_5,3)$. 
Checking the obtained list,  one can realize that $(1, 4, 10, 13, 13, 9, 3)$ does not appear among the pure $O$-sequences, a contradiction to the above-mentioned conjecture.
\end{remark}

\section{The structure of matroids}\label{sec2}

In  this paper we will stratify the set of matroids of fixed dimension and on a fixed vertex set in terms of partitions of the vertex set. To this aim, in this section we will prove some technical facts. 
Most of these are  well known facts for  matroid theory specialists, however we consider it convenient to provide  proofs as well.
We will then present  the simplified  matroid associated to any given matroid. This matroid has only trivial parallel classes, but  important information, such as the total Betti numbers $\beta_i(S/J(\D))$, is preserved. 
We conclude the section presenting a formula that computes the $h$-vector of a codimension two Stanley-Reisner ring of a matroid. \\

From now on, unless otherwise stated, we will consider simplicial complexes $\D$ on $[n]$ with the property that $v\in\D$ for all $v \in[n]$. Notice that  the number of vertices not belonging to $\D$ does not influence $h^\D$, so this is no restriction in terms of our goals. This assumption can be also expressed as $[n]=\bigcup_{F\in\D}F$ and  if $\D$ is a $(d-1)$-dimensional matroid on $[n]$, a remark in \cite[p. 94]{St} implies that
\[n-d=\max\{i~:~\HV^{\D}(i)\neq 0\}.\]

The following easy remark is the starting point  for many of  the following technical results.
\begin{remark}\label{matr1}
If $\D$ is a 1-dimensional simplicial complex on $[n]$, then $\D$ is a matroid if and only if for any $v,w \in [n]$ with $\{v,w\} \notin \D$ we have that $\link_{\D}(v)=\link_{\D}(w)$.
\end{remark}
One-dimensional simplicial complexes can be viewed as graphs on the same vertex set; the edges are the faces of dimension one. For this reason we will switch between graph and simplicial complex whenever we find ourselves in this case. Let us recall that a graph is called a \emph{complete $p$-partite} graph if and only if its vertex set can be partitioned into $p$ disjoint nonempty  sets $A_1,\ldots,A_p$ such that $\{v,w\}$ is an edge if and only if $v$ and $w$ lie in different sets of the partition. The following proposition shows that one-dimensional matroids and complete $p$-partite graphs are actually the same thing.  

\begin{proposition}\label{1dim}
If $\D$ is a 1-dimensional matroid, then $\D$ is a complete $p$-partite graph, for some integer $p\ge2$.
\end{proposition}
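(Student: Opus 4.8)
The plan is to use Remark \ref{matr1} as the sole engine, translating it into the statement that ``non-adjacency'' is an equivalence relation on $[n]$. First I would define a relation $\sim$ on the vertex set $[n]$ by declaring $v\sim w$ if either $v=w$ or $\{v,w\}\notin\D$. Reflexivity and symmetry are immediate, so the only real content is transitivity, and this is exactly where Remark \ref{matr1} does the work: if $v\sim w$ and $w\sim u$ with all three distinct, then $\{v,w\}\notin\D$ gives $\link_\D(v)=\link_\D(w)$, and $\{w,u\}\notin\D$ gives $\link_\D(w)=\link_\D(u)$, hence $\link_\D(v)=\link_\D(u)$; since $u\notin\link_\D(w)=\link_\D(u)$ trivially (a vertex is never in its own link), we get $u\notin\link_\D(v)$, i.e. $\{v,u\}\notin\D$, so $v\sim u$. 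A symmetric argument handles the degenerate cases where two of the three indices coincide. (One should double-check the edge case $\{w,u\}\notin\D$ is genuinely used: we need $u\notin\link_\D(w)$, which holds because $\link_\D(w)$ consists of vertices $x$ with $\{x,w\}\in\D$ and $x\neq w$, and $\{u,w\}\notin\D$.)

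Next I would let $A_1,\dots,A_p$ be the equivalence classes of $\sim$; these are nonempty and partition $[n]$, and $p\ge 1$. By construction, two distinct vertices $v,w$ lie in the same class precisely when $\{v,w\}\notin\D$, equivalently $\{v,w\}\in\D$ precisely when $v$ and $w$ lie in different classes. This is exactly the definition of a complete $p$-partite graph on the partition $A_1,\dots,A_p$, provided we also know $\D$ has no edges inside a class and that all cross-class pairs are edges — both of which are immediate from the displayed equivalence. It remains only to rule out $p=1$: if $p=1$ then $\D$ has no edges at all, contradicting that $\D$ is $1$-dimensional (it has at least one face of dimension $1$). Hence $p\ge 2$, which is the asserted bound.

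The only genuinely delicate point is the transitivity verification, and even that is short once one keeps careful track of the convention that a vertex does not belong to its own link; everything else is bookkeeping about equivalence classes. I do not anticipate a real obstacle here — the proposition is essentially a reformulation of Remark \ref{matr1}, and the proof should be a single short paragraph in the final text. If one wanted to be maximally careful, one could instead argue the contrapositive directly: suppose $\D$ is not complete $p$-partite for the partition into $\sim$-classes; then some cross-class pair fails to be an edge or some within-class pair is an edge, and either possibility contradicts the definition of $\sim$ — but the equivalence-relation packaging above is cleaner and is the route I would take.
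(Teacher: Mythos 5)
Your proof is correct. The transitivity check is the only real content, and you handle it properly: from $\{v,w\}\notin\D$ and $\{w,u\}\notin\D$, Remark \ref{matr1} gives $\lk_\D v=\lk_\D w=\lk_\D u$, and since $u\notin\lk_\D w$ (as $\{w,u\}$ is not an edge), also $u\notin\lk_\D v$, so $\{v,u\}\notin\D$. The passage from ``non-adjacency is an equivalence relation'' to ``complete $p$-partite'' is immediate, and the exclusion of $p=1$ from $1$-dimensionality is right.

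The route differs from the paper's in organization, though the engine is the same. The paper argues by induction on the number of vertices: it fixes a vertex $v$, uses Remark \ref{matr1} to show that the set $A_v$ of non-neighbours of $v$ is independent, observes that the restriction to $[n]\setminus A_v$ is again a matroid, and peels off one partition class per inductive step. You instead extract all the classes at once by showing the non-adjacency relation is transitive, which removes the induction and the (mildly delicate) verification that the restricted complex is a $1$- or $0$-dimensional matroid. Your version is the more self-contained of the two; what the paper's inductive phrasing buys is a template that it reuses elsewhere (restricting to subsets of parallel classes and inducting on their number), so the two proofs are best seen as the same observation packaged for different downstream use.
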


\begin{proof}
We will proceed by induction on $n$, the number of vertices. Assume that $n\geq 2$, choose $v$ a vertex of $\D$ and consider the set $A_v = \{w \in \D ~:~ \{v,w\}\notin\D\}$. As $\D$ is a matroid, we have by Remark \ref{matr1} that $\lk_{\D}v = \lk_{\D}w$ for any $w\in A_v$. This implies that $A_v$ is an independent set of vertices. Clearly $\lk_{\D}v$ is a $0$-dimensional simplicial complex whose faces correspond to the elements in $[n]\setminus A_v$. Moreover, as one can check by definition, the restriction $\D|_{[n]\setminus A_v}$ is also a matroid.

If $\dim \D|_{[n]\setminus A_v} = 1$, we have by induction that $\D|_{[n]\setminus A_v}$ is a complete $p$-partite graph, with $p$-partition of the vertex set $A_1\cup\ldots\cup A_p$. In this case it follows that $\D$ is a complete $(p+1)$-partite graph with partition $[n]=A_v \cup A_1\cup\ldots\cup A_p$.

If $\dim \D|_{[n]\setminus A_v} = 0$ then $[n]\setminus A_v$ is an independent set of vertices, so $\D$ is a complete bipartite graph with bipartition $[n] = A_v \cup ([n]\setminus A_v)$.
\end{proof}
The next corollary  gives a stratification of  the set of all $(d-1)$-dimensional matroids on $[n]$ that will be crucial throughout this work.
Clearly, the $k$-skeleton of a matroid is again a  matroid, so we have the following. 
\begin{corollary}\label{structurematroids}
Let  $\D$ be a simplicial complex. If $\D$ is  a matroid, then there exists a positive integer $p\ge2$ such that  the 1-skeleton of $\D$ is a complete $p$-partite graph.
\end{corollary}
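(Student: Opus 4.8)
The plan is to deduce Corollary~\ref{structurematroids} directly from Proposition~\ref{1dim} together with the observation that every skeleton of a matroid is again a matroid. First I would recall why the $1$-skeleton of a matroid $\D$ is itself a matroid: using characterization~3 in the definition of matroid complex, the restriction of the $1$-skeleton to any vertex set $W$ is simply the $1$-skeleton of $\D|_W$, which is pure of dimension $\min\{1,\dim\D|_W\}$ because $\D|_W$ is pure; hence the $1$-skeleton satisfies the purity-of-restrictions axiom. (Alternatively one can cite the general fact, stated just before the corollary, that the $k$-skeleton of a matroid is a matroid.)

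Next I would apply Proposition~\ref{1dim} to the $1$-skeleton $\G$ of $\D$. Here one must be slightly careful about dimension: if $\dim\D\ge 1$ then $\G$ is a genuine $1$-dimensional matroid, and Proposition~\ref{1dim} yields that $\G$ is a complete $p$-partite graph for some $p\ge 2$, which is exactly the claim. If $\dim\D=0$, then under the standing assumption that $v\in\D$ for every $v\in[n]$, the complex $\D$ consists of $n$ isolated vertices and no edges; this is the complete $n$-partite graph with each partition block a singleton (and $p=n\ge 2$ provided $n\ge 2$). The degenerate case $n=1$ can be absorbed by recalling that the statement is only interesting when there are at least two vertices, or simply noting that $p=2$ is not attainable there and adjusting the hypothesis accordingly; in the body of the paper this edge case is harmless since all the constructions assume $p\ge d\ge 1$ and $n\ge 2$.

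There is essentially no obstacle here: the corollary is a one-line consequence of the proposition once the skeleton-of-a-matroid fact is in hand. The only thing requiring a moment of thought — and the part I would state explicitly in the proof — is the reconciliation of the two possible dimensions of the $1$-skeleton with the convention $\dim\D|_W=\max\{\dim F: F\in\D|_W\}$, so that "$1$-dimensional matroid" in Proposition~\ref{1dim} is being applied to the correct object. Concretely, the proof would read: by the remark that skeletons of matroids are matroids, the $1$-skeleton of $\D$ is a matroid of dimension at most $1$; if it has dimension $1$, Proposition~\ref{1dim} gives the complete $p$-partite structure with $p\ge 2$; if it has dimension $0$, it is the complete $|[n]|$-partite graph on singletons. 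In all cases the $1$-skeleton is complete $p$-partite for some $p\ge 2$, as claimed.
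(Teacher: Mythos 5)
Your proposal is correct and follows exactly the route the paper takes: it observes that the $k$-skeleton of a matroid is again a matroid and then invokes Proposition~\ref{1dim}. The extra care you take with the $0$-dimensional degenerate case is a harmless refinement of the same argument.
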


Before showing the next  technical lemmas let us fix more notation. From now on, exploiting Corollary \ref{structurematroids}, $\D$ will be a $(d-1)$-dimensional matroid on $[n]$, with $p$-partition of its $1$-skeleton $A_1,\ldots,A_p$. We will call the sets of independent vertices given by the $p$-partition \emph{parallel classes}. Whenever necessary we will denote the vertices of a given parallel class as follows 
$$ A_i = \{v_{i,1},v_{i,2},\ldots,v_{i,a_i}\}.$$ 
For any integer $r \in \{1,\ldots,p\}$ and any indices $1\le i_1<\ldots<i_r\le p$ we denote by $\D_{i_1,\ldots,i_r}$ the restriction of $\D$ to the vertex set $A_{i_1}\cup\ldots \cup A_{i_r}$. We call $\D_{i_1,\ldots,i_r}$ the restriction of $\D$ to the parallel classes  $A_{i_1},\ldots,A_{i_r}$.

\begin{lemma}\label{lemma1} If for $r\le d$ parallel classes $A_{i_1},\ldots,A_{i_r}$, with $1\le i_1 < \ldots <i_r \le p,$ there exist $r$ vertices $v_{i_j}\in A_{i_j}$ such that $\{v_{i_1},\ldots,v_{i_r}\} \in \D$, then for any $r$ vertices $u_{i_j}\in A_{i_j}$ we have that $\{u_{i_1},\ldots,u_{i_r}\} \in \D$. 
\end{lemma}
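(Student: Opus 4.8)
The plan is to prove the statement by induction on $r$, using the exchange property of the matroid one parallel class at a time. Fix parallel classes $A_{i_1},\ldots,A_{i_r}$ and suppose $\{v_{i_1},\ldots,v_{i_r}\}\in\D$. Given target vertices $u_{i_j}\in A_{i_j}$, I would show that we may swap $v_{i_1}$ for $u_{i_1}$ (if they differ) while keeping the rest of the set, i.e.\ $\{u_{i_1},v_{i_2},\ldots,v_{i_r}\}\in\D$, and then iterate. So the whole statement reduces to the following single-swap claim: if $\{v_{i_1},v_{i_2},\ldots,v_{i_r}\}\in\D$ and $u_{i_1}\in A_{i_1}$, then $\{u_{i_1},v_{i_2},\ldots,v_{i_r}\}\in\D$.

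To prove the single-swap claim, set $F=\{v_{i_1},v_{i_2},\ldots,v_{i_r}\}$, a face of $\D$, and assume $u_{i_1}\neq v_{i_1}$ (otherwise there is nothing to do). Since $u_{i_1}$ and $v_{i_1}$ lie in the same parallel class $A_{i_1}$, the edge $\{u_{i_1},v_{i_1}\}$ is a nonface of $\D$; in particular $u_{i_1}\notin F$. Now consider the face $G=F\setminus\{v_{i_1}\}=\{v_{i_2},\ldots,v_{i_r}\}$. I claim $G\cup\{u_{i_1}\}\in\D$. Indeed, $G\cup\{u_{i_1}\}$ and $G\cup\{v_{i_1}\}=F$ have the same cardinality $r$, so if $G\cup\{u_{i_1}\}\notin\D$ it would contain a minimal nonface; by the augmentation axiom applied to $G$ (which is a face) and $F$ (a strictly larger face is not available here, so instead) — more cleanly: apply the augmentation axiom to the faces $G$ and $F$. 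Since $|G|<|F|$, there is a vertex of $F$, namely $v_{i_1}$ (the only vertex of $F$ not in $G$), such that $G\cup\{v_{i_1}\}\in\D$ — which we already knew. That does not immediately give $u_{i_1}$, so instead I will argue via the link: by Remark~\ref{matr1}, in the $1$-skeleton of $\D$ we have $\lk_\D u_{i_1}=\lk_\D v_{i_1}$ since $\{u_{i_1},v_{i_1}\}\notin\D$, but I actually need the statement about higher faces. The correct tool is: for a matroid, $v_{i_1}$ and $u_{i_1}$ being parallel (joined by no edge) implies $\lk_\D v_{i_1}=\lk_\D u_{i_1}$ as full simplicial complexes, not just on the $1$-skeleton. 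This follows because $\D|_{\{u_{i_1},v_{i_1}\}\cup W}$ is pure for every $W$ (property 3 of the matroid definition): if $G\cup\{v_{i_1}\}\in\D$ but $G\cup\{u_{i_1}\}\notin\D$, then the restriction of $\D$ to $G\cup\{u_{i_1},v_{i_1}\}$ has a facet containing $v_{i_1}$ of dimension $|G|$ and, since $\{u_{i_1},v_{i_1}\}$ is a nonface, any facet containing $u_{i_1}$ omits $v_{i_1}$ and lies inside $G\cup\{u_{i_1}\}$, hence has dimension $<|G|$, contradicting purity. Therefore $\lk_\D v_{i_1}=\lk_\D u_{i_1}$, and since $G\in\lk_\D v_{i_1}$ we get $G\in\lk_\D u_{i_1}$, i.e.\ $G\cup\{u_{i_1}\}\in\D$, which is the single-swap claim.

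With the single-swap claim in hand, the induction is routine: starting from $\{v_{i_1},\ldots,v_{i_r}\}$, replace $v_{i_1}$ by $u_{i_1}$ to get $\{u_{i_1},v_{i_2},\ldots,v_{i_r}\}\in\D$; this is again a transversal of $r$ parallel classes lying in $\D$, so by the same claim applied to the class $A_{i_2}$ we may replace $v_{i_2}$ by $u_{i_2}$, and so on, arriving after $r$ steps at $\{u_{i_1},\ldots,u_{i_r}\}\in\D$. Note the hypothesis $r\le d$ is not actually needed for the argument — a transversal of $r$ parallel classes that happens to be a face stays a face under vertex substitution regardless — so I would either drop it or keep it only because it is the regime in which the lemma is used later.

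The main obstacle is the step establishing that parallel vertices have equal links as \emph{full} simplicial complexes rather than merely in the $1$-skeleton. The given Remark~\ref{matr1} is stated only for $1$-dimensional complexes, so I cannot invoke it directly in dimension $r-1$; the purity characterization (property 3 of the matroid definition), applied to the restriction to $G\cup\{u_{i_1},v_{i_1}\}$, is the right replacement and must be spelled out carefully, paying attention to the fact that $\{u_{i_1},v_{i_1}\}$ being a nonface forces every facet of that restriction to avoid one of the two vertices.
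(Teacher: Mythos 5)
Your proof is correct, but it takes a genuinely different route from the paper's. The paper argues directly with the augmentation axiom: assuming $\{u_1,\ldots,u_r\}\notin\D$, it takes a maximal subface $\{u_1,\ldots,u_s\}$, uses purity of the restriction $\D_{1,\ldots,r}$ to find an $(r-1)$-dimensional facet $F$ through $u_{s+1}$ (necessarily a transversal of the $r$ classes), and augments $\{u_1,\ldots,u_s\}$ from $F$; since no face meets a class twice, the vertex of $F$ lying in $A_{s+1}$ --- which is $u_{s+1}$ itself --- must be among the added vertices, contradicting maximality of $s$. You instead isolate a single-swap claim --- parallel vertices have the same link as full simplicial complexes, not just in the $1$-skeleton --- prove it by purity of the restriction to $G\cup\{u_{i_1},v_{i_1}\}$, and iterate class by class. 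Your single-swap claim is precisely the paper's Lemma~\ref{lemma2} (stated there only for facets, but its proof, essentially identical to yours, works for arbitrary faces), so in effect you derive Lemma~\ref{lemma1} from a strengthened Lemma~\ref{lemma2}, whereas the paper proves the two independently. Both arguments are sound and of comparable length; yours has the small advantages of making explicit that the hypothesis $r\le d$ is automatic (a face has at most $d$ vertices) and of unifying the two lemmas, at the cost of a meandering exposition --- the abandoned augmentation attempt in the middle of your second paragraph should simply be deleted. One point you use tacitly and should state: $\{u_{i_1}\}$ is a face of the restriction (true under the paper's standing assumption that every vertex belongs to $\D$), which is what guarantees that purity produces a top-dimensional facet through $u_{i_1}$ in the first place.
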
 

\begin{proof} 
We may assume without loss of generality that $i_j = j$, for $j = 1,\ldots,r$.  Choose now $r$ vertices $u_j \in A_j$ and assume that $\{u_1,\ldots,u_r\} \notin \D$. Let $s<r$ be the maximum size of a subset of $\{u_1,\ldots,u_r\}$ that belongs to $\D$. Again we may assume that actually $\{u_1,\ldots,u_s\} \in \D$. 
The simplicial complex $\D_{1,\ldots,r}$ is a matroid. Since $\{v_1,\ldots ,v_r\}\in\D_{1,\ldots,r}$ and the 1-skeleton of $\D_{1,\ldots,r}$ is complete $r$-partite, we have  $\dim \D_{1,\ldots,r} = r-1$. As a matroid is pure, we have that $u_{s+1}$ belongs to some $(r-1)$-dimensional facet $F$ of $\D_{1,\ldots,r}$. Notice that, by the $r$-partition of $\D_{1,\ldots,r}$'s 1-skeleton,  the facet $F$ has to contain exactly one vertex from each parallel class. By the augmentation axiom, we know that there exist $r-s$ vertices $w_1,\ldots,w_{r-s}\in F$ such that $\{u_1,\ldots,u_s, w_1,\ldots,w_{r-s}\} \in \D_{1,\ldots,r}$. As a face cannot contain two vertices from the same parallel class, we obtain that  $w_i = u_{s+1}$ for some $1\le i\le r-s$. In particular $\{u_1,\ldots,u_s,u_{s+1}\} \in \D$, a contradiction to the maximality of $s$. 
\end{proof}
\begin{lemma}\label{lemma2} Let $A_i$ be one of the parallel classes of $\D$ and let $v,w \in A_i$. Then 
\[ \lk_\D v = \lk_\D w.\]
\end{lemma}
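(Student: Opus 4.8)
The plan is to reduce the statement to the one-dimensional case handled by Remark \ref{matr1}, but applied to all links simultaneously. Concretely, I claim it suffices to show that for every face $G \in \D$ with $v, w \notin G$, one has $G \cup \{v\} \in \D$ if and only if $G \cup \{w\} \in \D$; this is exactly the assertion $\lk_\D v = \lk_\D w$ once one also checks that no face of $\lk_\D v$ contains $w$ (and vice versa), which is immediate since $\{v,w\} \notin \D$ as $v,w$ lie in the same parallel class $A_i$.

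First I would fix such a face $G \in \D$ with $v,w \notin G$ and suppose $G \cup \{v\} \in \D$; by symmetry it is enough to deduce $G \cup \{w\} \in \D$. The idea is to run an argument in the spirit of the proof of Lemma \ref{lemma1}: consider the restriction of $\D$ to the vertex set $G \cup \{v, w\}$, or more efficiently work directly with the augmentation axiom. Since $\{w\} \in \D$ (we are assuming every vertex is a face) and $|\{w\}| < |G \cup \{v\}|$, the augmentation axiom gives a vertex $u \in G \cup \{v\}$ with $\{w, u\} \in \D$; because $\{v,w\} \notin \D$ we must have $u \in G$. One then wants to iterate: having built a face $\{w\} \cup H$ with $H \subsetneq G$ and $|H| < |G|$, apply augmentation against the larger face $G \cup \{v\}$ to enlarge $H$. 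The subtle point is to guarantee that the newly added vertex can always be taken inside $G$ rather than being forced to be $v$ — and here the hypothesis $\{v,w\} \notin \D$ is exactly what blocks $v$ from being added, so at each step the new vertex lies in $G \setminus H$. Continuing until $H = G$ yields $G \cup \{w\} \in \D$, as desired.

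I expect the main obstacle to be making the iteration airtight: one must be careful that augmenting $\{w\} \cup H$ against $G \cup \{v\}$ indeed produces a vertex in $(G \cup \{v\}) \setminus (\{w\} \cup H)$, and that this set is nonempty at every stage before $H = G$ — which holds because $|\{w\}\cup H| = |H| + 1 \le |G| < |G| + 1 = |G \cup \{v\}|$ throughout, so the strict inequality needed for the augmentation axiom persists. The only way augmentation could add $v$ is ruled out by $\{v, w\} \notin \D$ together with the fact that all intermediate faces contain $w$; hence the added vertex is always in $G \setminus H$, strictly increasing $H$. This terminates after $|G \setminus H|$ steps with $\{w\} \cup G \in \D$. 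Swapping the roles of $v$ and $w$ gives the reverse inclusion, and combined with the observation that $w \notin \lk_\D v$ and $v \notin \lk_\D w$, we conclude $\lk_\D v = \lk_\D w$. (Alternatively, one could phrase the whole argument as an application of Remark \ref{matr1} to the $1$-skeleton of each $\lk_\D F$, but the direct augmentation argument above seems cleanest.)
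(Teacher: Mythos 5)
Your proof is correct, and it takes a genuinely different (though closely related) route from the paper's. The paper's argument is a one-step application of the third matroid axiom (every restriction is pure): it takes a facet $\{a_1,\ldots,a_{d-1}\}$ of $\lk_\D v$, restricts $\D$ to the $(d+1)$-element set $\{v,w,a_1,\ldots,a_{d-1}\}$, notes this restriction is pure of dimension $d-1$, and concludes that $w$ lies in a $d$-element face of it which cannot contain $v$ (since $\{v,w\}\notin\D$) and hence must be $\{w,a_1,\ldots,a_{d-1}\}$; equality of the links then follows because the links are pure. You instead use the augmentation axiom iteratively, growing $\{w\}$ inside $G\cup\{v\}$ one vertex at a time for an arbitrary face $G$ of the link, with $\{v,w\}\notin\D$ ruling out $v$ as the adjoined vertex at every stage. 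Both proofs hinge on exactly the same observation — the non-edge $\{v,w\}$ blocks $v$ from being exchanged in — so yours is essentially the ``step-by-step'' version of the paper's ``all at once'' purity argument; what you gain is that you treat all faces of the link directly rather than only facets, what you pay is an induction. Two minor points to make airtight: you need the standing assumption that $\{w\}\in\D$ (which the paper imposes at the start of Section 2), and you should read the augmentation axiom in its standard non-vacuous form with $i\in G\setminus F$, which you implicitly do when asserting that the newly produced vertex lies outside $\{w\}\cup H$. With those understood, the argument is complete.
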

\begin{proof}
Choose $\{a_1,\ldots,a_{d-1}\} \in \lk_\D v$. The restriction $\D|_{\{v,w, a_1,\ldots,a_{d-1}\}}$ is a $(d-1)$-dimensional pure complex. As $\{v,w\}\notin\D$ we obtain that $\{a_1,\ldots,a_{d-1},w\} \in \D|_{\{v,w, a_1,\ldots,a_{d-1}\}}$ and thus $\{a_1,\ldots,a_{d-1}\} \in \lk_\D w$.
\end{proof}

Exploiting the results of Lemma \ref{lemma1} and Lemma \ref{lemma2} we will simplify notation in the following way. We will write $A_{i_1}\ldots A_{i_r} \in \D$ if there exist vertices $v_{i_j} \in A_{i_j}$ for all $j = 1,\ldots,r$ such that $\{v_{i_1},\ldots,v_{i_r}\}\in\D$. By Lemma \ref{lemma1} this holds for any choice of $r$ vertices, one in each parallel class. As by Lemma \ref{lemma2} the link of all the vertices in one parallel class is the same, we will denote by $\lk_\D A_i$ the link of some vertex $v\in A_i$. 
These two lemmas lead us  to the following definition (see  \cite[p. 49] {Ox11} for the classical matroid-theoretical definition). 

\begin{definition}
Let $\D$ be a simplicial complex with complete $p$-partite 1-skeleton, satisfying Lemma \ref{lemma1}.  Let $A_1\cup\ldots\cup A_p$ be the $p$-partition and choose for each $i=1,\ldots,p$ a vertex $v_{i,1} \in A_i$.
We define the associated \emph{simplified} complex as\\
\[ \Ds=\D|_{\{v_{1,1},\ldots,v_{p,1}\}}. \]
\end{definition}
We will call a parallel class of $\D$ a \emph{cone class} if  the corresponding vertex in $\Ds$ is a cone point of $\Ds$. This is clearly equivalent to every facet of $\D$ containing a vertex of that parallel class.

\begin{remark} Let $\D$ be a simplicial complex with complete $p$-partite 1-skeleton. Then, using Lemma \ref{lemma1}, we have 
\[ \D \textrm{ is a matroid~} \iff \Ds \textrm{ is a matroid.}\]
\end{remark}

The next proposition shows the close relation between a matroid $\D$ and $\Ds$. 

\begin{proposition}\label{betti}
Given a matroid $\D$ on $[n]$, we have $\beta_i(S/J(\D))=\beta_i(S/J(\Ds))$ for all $i$. In particular, $\type(\D)=\type(\Ds)$.
\end{proposition}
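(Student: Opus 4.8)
The plan is to reduce the general statement to the case in which exactly one parallel class, say $A_1$, has more than one vertex, and then to handle this single reduction step directly. For the reduction: if several parallel classes are non-trivial, one can pass from $\D$ to the simplified complex one class at a time, so it suffices to prove that if $\D$ has a parallel class $A_1=\{v_{1,1},\ldots,v_{1,a_1}\}$ with $a_1\ge 2$ and $\D'$ is the matroid obtained by deleting the vertex $v_{1,1}$ (equivalently, restricting to the remaining $n-1$ vertices), then $\beta_i(S/J(\D))=\beta_i(S/J(\D'))$ for all $i$. Iterating this brings us down to $\Ds$. Note $v_{1,1}$ is not a cone point of $\D$ (the other vertices of $A_1$ witness a facet avoiding it, since $A_1$ is an independent set whose vertices all have the same link by Lemma \ref{lemma2}), so Lemma \ref{linkage} applies.

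The heart of the argument is then the single-step comparison. By Lemma \ref{linkage} applied at $v=v_{1,1}$, we have $J(\D)=x_{v}J(\D\setminus v)+J(\lk_\D v)$, and the Remark following it gives the short exact sequence
\[
0\longrightarrow J(\lk_{\D}v)(-1)\longrightarrow J(\D\setminus v)(-1)\oplus J(\lk_{\D}v) \longrightarrow J(\D) \longrightarrow 0.
\]
The key observation is that since $v=v_{1,1}$ and $w=v_{1,2}$ lie in the same parallel class, Lemma \ref{lemma2} gives $\lk_\D v=\lk_\D w$; moreover $w\notin\lk_\D v$ and $w$ is a vertex of $\D\setminus v$ that plays the role of $v$ inside that class, so that $\lk_{\D\setminus v} w=\lk_\D v$ as well. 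This means the inclusion $J(\lk_\D v)\hookrightarrow J(\D\setminus v)$ in the middle term is, up to multiplication by $x_w$, exactly the analogue of the linkage relation one step down, and in particular $J(\lk_\D v)$ is a direct summand "in the $x_w$-adic sense." Concretely, I would argue that the connecting maps in the long exact $\Tor$ sequence vanish: the map $J(\lk_\D v)(-1)\to J(\lk_\D v)$ is multiplication by $x_v$ on a module over $\Bbbk[x_2,\ldots,x_n,\ldots]$-type ring where $x_v$ is a non-zerodivisor, hence injective on $\Tor$; combined with the fact that $J(\D\setminus v)$ and $J(\lk_\D v)$ are Cohen–Macaulay of the expected (equal) dimension — using the algebraic characterization that symbolic powers of matroid Stanley–Reisner ideals are Cohen–Macaulay, or directly that matroid cover ideals define Cohen–Macaulay quotients — one gets that the sequence splits at the level of minimal free resolutions into the mapping cone without cancellation, yielding $\beta_i(S/J(\D))=\beta_i(S/J(\D\setminus v))$.

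A cleaner route, which I would actually prefer to write up, is to avoid resolutions of the short exact sequence entirely and instead use polarization/specialization. The passage $\D\rightsquigarrow\D'=\D\setminus v_{1,1}$ replaces the two variables $x_{v_{1,1}},x_{v_{1,2}}$ attached to one parallel class by a single variable; at the level of the cover ideal this is precisely a "depolarization" step along the two parallel vertices, since a basic vertex cover of $\D$ either contains all of $A_1$ or none of it — here I would invoke Lemma \ref{lemma2} to see that in any basic cover, $v_{1,1}\in A$ iff $v_{1,2}\in A$ — so $J(\D)$ is obtained from $J(\D')$ by replacing the variable of the merged class by a product, i.e. $J(\D')$ is a specialization of $J(\D)$ by a regular sequence of differences of variables. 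Total Betti numbers are preserved under such specialization by a linear non-zerodivisor, giving $\beta_i(S/J(\D))=\beta_i(S/J(\D'))$ at once, and iterating over all non-trivial parallel classes yields $\beta_i(S/J(\D))=\beta_i(S/J(\Ds))$. The statement about type then follows because the type is the last total Betti number $\beta_d$.

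The main obstacle is the bookkeeping in the combinatorial claim that "a basic vertex cover contains $v_{1,1}$ if and only if it contains $v_{1,2}$," together with checking that after merging the class the resulting ideal is genuinely a specialization of $J(\D)$ along a non-zerodivisor (equivalently, that the relevant difference of variables is a non-zerodivisor on $S/J(\D)$ — this uses the Cohen–Macaulayness and dimension count). Once that is in place, the preservation of total Betti numbers is standard, and the induction on the number of non-trivial parallel classes is immediate.
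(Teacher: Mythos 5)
Your preferred (``cleaner'') route contains a genuine error at its central step. The two preliminary claims you isolate are correct: every basic vertex cover of $\D$ contains a parallel class entirely or not at all (if $A$ is a cover with $v_{1,1}\in A$ and $v_{1,2}\notin A$, then since $\lk_\D v_{1,1}=\lk_\D v_{1,2}$ the cover $A$ already meets every facet of that common link, so $A\setminus\{v_{1,1}\}$ is still a cover and $A$ is not basic), and $x_{v_{1,1}}-x_{v_{1,2}}$ is a non-zerodivisor on $S/J(\D)$ because the associated primes of the radical ideal $J(\D)$ are the $\P_F$ with $F\in\F(\D)$ and no facet contains two vertices of the same class (Cohen--Macaulayness is not needed here). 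The problem is the assertion that this specialization produces $J(\D')$, or, after modding out all the differences within each class, $J(\Ds)$. It does not: the generator of $J(\D)$ attached to a basic cover containing $A_1$ is divisible by $x_{v_{1,1}}x_{v_{1,2}}\cdots x_{v_{1,a_1}}$, and setting $x_{v_{1,1}}=x_{v_{1,2}}$ turns this factor into $x_{v_{1,2}}^{2}x_{v_{1,3}}\cdots x_{v_{1,a_1}}$, not into $x_{v_{1,2}}x_{v_{1,3}}\cdots x_{v_{1,a_1}}$. So the ring you reach is a non-reduced monomial quotient whose ideal involves $a_i$-th powers of the surviving variables, and you still owe an argument that this ideal has the same Betti numbers as $J(\Ds)$. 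That missing comparison is exactly where the content lies, and it is the paper's entire proof: the paper defines $\phi:\Bbbk[y_1,\ldots,y_p]\to S$ by $y_i\mapsto\prod_{j\in A_i}x_j$, checks $\phi(J(\Ds))S=J(\D)$ (your combinatorial claim), and invokes Hartshorne's theorem that substitution along a regular sequence of monomials is flat, so a minimal free resolution of $R/J(\Ds)$ base-changes to one of $S/J(\D)$. Your depolarization step is a consequence of that flatness, not a substitute for it; to repair your route you would have to add the substitution $y_i\mapsto y_i^{a_i}$ (flat, indeed free) to bridge the powered ideal and $J(\Ds)$ --- at which point the one-step substitution $y_i\mapsto\prod_{j\in A_i}x_j$ is simpler.

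Your first route is not a proof at all: ``I would argue that the connecting maps vanish'' and ``splits into the mapping cone without cancellation'' are placeholders, not arguments. The exact sequence from Lemma \ref{linkage} only bounds $\beta_i(S/J(\D))$ by sums of Betti numbers of $J(\D\setminus v)$ and $J(\lk_\D v)$; extracting the asserted equality $\beta_i(S/J(\D))=\beta_i(S/J(\D\setminus v))$ would require exhibiting substantial cancellation, and nothing in your sketch does that.
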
 
\begin{proof}
Set $R=\Bbbk[y_1,\ldots ,y_p]$, and consider the $\Bbbk$-algebra homomorphism
$$
\begin{array}{rcl}
\phi: R & \longrightarrow & S\\
 y_i & \mapsto & \prod_{j\in A_i}x_j=m_i
\end{array}
$$
One can check that $\phi(J(\Ds))S=J(\D)$. Moreover it is obvious that $m_1,\ldots ,m_p$ form a regular sequence of $S$, so by a theorem of Hartshorne (\cite[Proposition 1]{Ha2}) $S$ is a flat $R$-module via  $\phi$. So, if $F_{\bullet}$ is a minimal free resolution of $R/J(\Ds)$ over $R$, then it follows that $F_{\bullet}\otimes_{R}S$ is a minimal free resolution of $S/J(\D)$ over $S$. Therefore we may conclude.
\end{proof}

\begin{remark}
With the notation of Proposition \ref{betti}, notice that $\phi$ allows also to recover the \emph{graded} Betti numbers of $J(\D)$ from those of $J(\Ds)$. Provided that the partition of the $1$-skeleton of $\D$ is known, it is enough to consider the natural $\ZZ^p$-grading both on $R$ and on $S$. The $\ZZ^p$-grading on  $S$  is given by the $p$-partition. 
\end{remark}

We will conclude this section with a first application of Equation \eqref{H}. We will find a formula the $h$-vectors $\HV^{\D}$ where $\D$ is a $1$-dimensional matroid. By Theorem \ref{duality}, this is equivalent to describing the $h$-vectors of $\Bbbk[\D]$, where $\D$ is a matroid such that its Stanley-Reisner ideal has height $2$.

By Proposition \ref{1dim}, a 1-dimensional matroid $\D$  is actually a complete $p$-partite graph on $n$ vertices. For all $k=1,\ldots ,n-1$, let us set
\[\AL_k(\D)=|\{i\in \{1,\ldots ,p\}~:~|A_i|\geq k\}|-1.\]

\begin{proposition}\label{characterizationcodimension2}
Let $\D$ be a $1$-dimensional matroid on $[n]$. For all $k=0,\ldots ,n-2$, we have 
\[\displaystyle \HV^{\D}(k)=\sum_{i=1}^{n-k-1}\AL_i(\D).\]
\end{proposition}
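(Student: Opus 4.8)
The natural approach is induction on $n$ using the recursive formula \eqref{H}. Since $\D$ is a $1$-dimensional matroid, by Proposition~\ref{1dim} it is a complete $p$-partite graph with parallel classes $A_1,\ldots,A_p$; write $a_i=|A_i|$ and assume (reordering) that $a_1\ge a_2\ge\cdots\ge a_p$. The base case is the minimal situation: either a single edge ($n=2$, $p=2$), where $\HV^{\D}=(1)$, or more generally a complete $p$-partite graph on $p$ vertices (all $a_i=1$), which is the skeleton of a simplex and for which $h^{\D}=h_{\D^c}$ can be computed directly; one checks the formula reduces to the binomial expression for $\HV^{\D}(k)$ in that case, consistent with item~5 in the list after Conjecture~\ref{conjd} (the dual is a uniform, hence paving, matroid).

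**The inductive step.** Pick a vertex $v$ in a parallel class of maximal size, say $v\in A_1$, and assume $A_1$ is not a cone class, i.e. $v$ is not a cone point — this is where one must be careful. If $p\ge3$, then $v$ is not a cone point (any edge inside $A_2\cup\cdots\cup A_p$ avoids it). Deleting $v$ from $\D$ gives the complete $p$-partite graph with classes $A_1\setminus\{v\},A_2,\ldots,A_p$ (dropping $A_1$ entirely if $a_1=1$), and $\lk_\D v$ is the $0$-dimensional complex on $[n]\setminus A_1$, whose $h$-vector $\HV^{\lk_\D v}$ is supported in degrees $0,\ldots,n-a_1-1$ with a known explicit shape (it is the $h$-vector of a $0$-dimensional matroid on $n-a_1$ vertices, namely $(1,1,\ldots,1)$ of length $n-a_1$, or better, one re-derives it from the formula applied in dimension $0$). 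Plugging into \eqref{H},
\[
\HV^{\D}(k)=\HV^{\D\setminus v}(k-1)+\HV^{\lk_\D v}(k),
\]
and applying the induction hypothesis to $\D\setminus v$ (which has $n-1$ vertices), the claim becomes a combinatorial identity: one must show
\[
\sum_{i=1}^{n-k-1}\AL_i(\D)=\sum_{i=1}^{n-k-1}\AL_i(\D\setminus v)+\HV^{\lk_\D v}(k).
\]
Since removing $v$ from $A_1$ decreases $\AL_i$ by exactly $1$ for each $i\le a_1$ (when $a_1\ge 2$; the case $a_1=1$ must be handled separately, as then $p$ drops and every $\AL_i$ drops by $1$), the left side exceeds $\sum_{i=1}^{n-k-2}\AL_i(\D\setminus v)$ in a controlled way, and the bookkeeping should match $\HV^{\lk_\D v}(k)$ term by term after reindexing.

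**Handling $p=2$.** The complete bipartite case needs separate treatment since then a vertex in the smaller class $A_2$ may force issues, but a vertex in $A_1$ is a cone point only if $a_2=0$, which is excluded. If $p=2$, every vertex of $A_1$ has link equal to all of $A_2$ and vice versa; here one can either run the same induction (deleting a vertex of $A_1$ keeps $p=2$ as long as $a_1\ge 2$, and when $a_1=1$ we are in the base case $n=2$) or compute $\HV^{\D}$ directly — a complete bipartite graph $K_{a_1,a_2}$ is the join-type complex whose $h$-vector is classical. One should double-check that the proposed formula gives the right answer on $K_{1,m}$ (a cone, so $\D$ has a cone point and $\HV^{\D}=\HV^{\lk_\D v}$ shifts appropriately) as a sanity test of the indexing conventions in the definition of $\AL_k$.

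**Expected main obstacle.** The genuine content is purely the combinatorial identity in the inductive step, together with getting the edge cases right: when $a_1=1$ (so deleting $v$ reduces the number of parts), when the deleted class becomes empty, and the off-by-one conventions in $\AL_k(\D)=|\{i:a_i\ge k\}|-1$ versus the range $1\le k\le n-1$ and the summation range $1\le i\le n-k-1$. Establishing \eqref{H} applies (i.e. exhibiting a non-cone vertex) is immediate once $p\ge 3$ and is trivial for $p=2$ with $a_1\ge 2$, so the structural input is cheap; the careful summation manipulation with the two degenerate cases is where the real work lies.
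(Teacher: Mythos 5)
Your overall strategy is exactly the paper's: induct on $n$, delete a non-cone vertex $v$ from one parallel class, observe that $J(\lk_{\D}v)$ is principal so that $\HV^{\lk_{\D}v}$ is $(1,1,\ldots,1)$ supported in degrees $0,\ldots,n-a_1-1$, apply \eqref{H}, and reduce to a combinatorial identity for the numbers $\AL_i$. (The paper deletes from $A_p$ rather than from the largest class; this is immaterial. Your choice of the largest class does guarantee that $v$ is not a cone point when $p=2$, but the correct reason is that $a_1\geq 2$ outside the base case, not that ``$a_2=0$ is excluded'': a vertex of $A_1$ is a cone point iff $a_1=1$.)

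However, the one computation you yourself identify as ``where the real work lies'' is stated incorrectly, and as stated it would make your identity false. Deleting a vertex from the class $A_1$ of size $a_1$ changes the list of class sizes from $(a_1,a_2,\ldots,a_p)$ to $(a_1-1,a_2,\ldots,a_p)$, so the count $|\{i:|A_i|\geq k\}|$ changes only at $k=a_1$: one has $\AL_{a_1}(\D\setminus v)=\AL_{a_1}(\D)-1$ and $\AL_k(\D\setminus v)=\AL_k(\D)$ for every $k\neq a_1$. This covers $a_1=1$ as well, where only $\AL_1$ drops --- not ``every $\AL_i$''. With your claimed rule (a drop of $1$ for every $i\leq a_1$) the difference $\sum_{i=1}^{n-k-1}\AL_i(\D)-\sum_{i=1}^{n-k-1}\AL_i(\D\setminus v)$ would equal $\min(a_1,\,n-k-1)$, which cannot match $\HV^{\lk_{\D}v}(k)\in\{0,1\}$, so the ``term by term'' matching you defer to bookkeeping would fail. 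With the correct rule the difference is $1$ exactly when $a_1\leq n-k-1$, i.e.\ $k\leq n-a_1-1$, which is precisely the support of $\HV^{\lk_{\D}v}$, and the identity closes; this is exactly the two-case check the paper performs. So the gap is a single wrong combinatorial fact at the crux; the fix is immediate, but the proof as written does not go through.
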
 

\begin{proof}
Let us choose a vertex $v\in A_p$. Clearly, the cover ideal of the link of $v$ is the principal ideal
\[\displaystyle J(\lk_{\D}v)=\biggl(\prod_{i\in [n]\setminus A_p}x_i\biggr).\]
In particular, we have 
\begin{equation*}
\HV^{\lk_{\D}v}(i)= \left\{
\begin{array}{ll}
1& \text{if } 0\le i<n-|A_p|, \\
0&\text{otherwise.}
\end{array}\right.
\end{equation*}
 The partition sets of the matroid $\D\setminus v$ are $A_1,A_2,\ldots ,A_{p-1},A_p\setminus \{v\}$, so we have
\begin{equation*}
\AL_k(\D\setminus v) = \left\{
\begin{array}{ll}
\AL_k(\D) & \text{if }k\neq |A_p|,\\
\AL_k(\D)-1& \text{if } k=|A_p|.
\end{array} \right.
\end{equation*}
\noindent By induction we have
\[\HV^{\D\setminus v}(k)=\sum_{i=1}^{n-k-2}\AL_i(\D\setminus v),\] 
for all $k=0,\ldots ,n-3$. On the other side,  by \eqref{H} we have
\[\HV^{\D}(k)=\HV^{\D\setminus v}(k-1)+\HV^{\lk_{\D}v}(k), \ \ \ \forall \ k=0,\ldots ,n-1.\]
Therefore,
\begin{equation*}
\HV^{\D}(k)=\left\{
\begin{array}{llll}
\sum_{i=1}^{n-k-1}\AL_i(\D)-1 + \HV^{\lk_{\D}v}(k)&=&\sum_{i=1}^{n-k-1}\AL_i(\D)&  \text{if } k\leq n -|A_p|-1,\\
\rule{0pt}{6ex} 
\sum_{i=1}^{n-k-1}\AL_i(\D) + \HV^{\lk_{\D}v}(k)&=&\sum_{i=1}^{n-k-1}\AL_i(\D)&\text{otherwise.}
\end{array}\right.
\end{equation*}
\end{proof}

\begin{corollary}\label{corheight2}
For a sequence $h=(1,2,h_3,\ldots ,h_s)$, the following are equivalent:
\begin{itemize}
\item[{\em (i)}] There is a matroid $\D$ such that $h$ is the $h$-vector of $\Bbbk[\D]$.
\item[{\em (ii)}] There is a matroid $\D$ such that $h$ is the $h$-vector of $S/J(\D)$.
\item[{\em (iii)}] $h$ is a pure $O$-sequence.
\item[{\em (iv)}] $h$ is the $h$-vector of a level algebra.
\item[{\em (v)}] $h_{i+1}\leq 2h_i+h_{i-1}$ for all $i=1,\ldots ,s$.
\end{itemize} 
\end{corollary}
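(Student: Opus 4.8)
The plan is to prove Corollary \ref{corheight2} by establishing the chain of implications $(v)\Rightarrow(i)\Rightarrow(ii)\Rightarrow(iii)\Rightarrow(iv)\Rightarrow(v)$, which closes the cycle. Several of these are either immediate or already known. For $(i)\Leftrightarrow(ii)$ we use Theorem \ref{duality}: if $h=h_{\Bbbk[\D]}$ for a matroid $\D$ of codimension $2$, then $h=h^{\D^c}$ and $\D^c$ is a $1$-dimensional matroid, and conversely; so these two statements are the same after passing to the dual. The implication $(ii)\Rightarrow(iii)$ is precisely Conjecture \ref{conjd} in the height $2$ case, which was recorded as item 6 in the list after the conjecture, and follows from the fact that height $2$ pure $O$-sequences coincide with $h$-vectors of level algebras together with \cite[Chapter III, Theorem 3.4]{St}. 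The implication $(iii)\Rightarrow(iv)$ is trivial, since a pure $O$-sequence is by definition the $h$-vector of an Artinian monomial level algebra. Finally $(iv)\Rightarrow(v)$ is the Hilbert--Burch/Macaulay-type numerical constraint on $h$-vectors of codimension $2$ level (indeed Cohen--Macaulay) algebras; alternatively it follows from item 6 once more combined with the characterization of height $2$ pure $O$-sequences in \cite[Proposition 4.5]{BMMNZ}.

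The real content, and the step I expect to be the main obstacle, is $(v)\Rightarrow(i)$: given a numerical sequence $h=(1,2,h_3,\ldots,h_s)$ satisfying $h_{i+1}\le 2h_i+h_{i-1}$, we must \emph{construct} a $1$-dimensional matroid $\D$ — equivalently, by Proposition \ref{1dim}, a complete $p$-partite graph — whose $h$-vector $\HV^{\D}$ equals $h$, and then the desired matroid in $(i)$ is $\D^c$. Here is where Proposition \ref{characterizationcodimension2} does the heavy lifting: it gives the explicit formula
\[
\HV^{\D}(k)=\sum_{i=1}^{n-k-1}\AL_i(\D),\qquad \AL_i(\D)=\#\{j:|A_j|\ge i\}-1,
\]
for a complete $p$-partite graph with parts $A_1,\ldots,A_p$. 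So I would reformulate the problem combinatorially: we are free to choose the partition $\aaa=(a_1\ge a_2\ge\cdots\ge a_p)$ of $n$ (with $p\ge2$), this determines the nonincreasing sequence $c_1\ge c_2\ge\cdots$ of the $\AL_i$'s (note $c_1=p-1$ and each $c_i$ counts parts of size $\ge i$, minus one), and then $\HV^{\D}$ is obtained by taking partial sums of the $c_i$ read in reverse. The task becomes: show that every $h$ satisfying (v) arises this way from a suitable choice of $\aaa$.

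To carry this out I would argue by induction on $s$ (the length of $h$), peeling off the last entry exactly as in the proof of Proposition \ref{characterizationcodimension2} via formula \eqref{H}: if $h=(1,2,h_3,\ldots,h_s)$ satisfies (v), then one checks that the "predecessor" sequence $h'$ with $\HV^{\D\setminus v}(k)=\HV^\D(k+1)$ — appropriately corrected by the contribution of $\HV^{\lk_\D v}$, which is a $0/1$ step function determined by $|A_p|$ — still satisfies (v) with smaller length, so by induction it is realized by some complete $p'$-partite graph, to which we add back a vertex in the last parallel class (or a new parallel class) to realize $h$. The delicate point is the bookkeeping at the "turning" index $k=n-|A_p|-1$ where the link's contribution switches from $1$ to $0$, and checking that the inequality $h_{i+1}\le 2h_i+h_{i-1}$ is exactly what guarantees that at each step the sequence $(c_i)$ we need remains a legitimate nonincreasing sequence of the form "$\#\{\text{parts}\ge i\}-1$" — i.e. that it can be realized by an honest partition with at least $d$ (here $\ge2$) parts. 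Equivalently, one may phrase this directly: the condition $h_{i+1}\le 2h_i+h_{i-1}$ says the first difference of the first difference of $h$ is bounded, which translates under the double-partial-sum/reversal correspondence into the statement that the multiplicities of part-sizes in $\aaa$ are nonnegative. Once this translation is made precise, the construction of $\aaa$ from $h$ is explicit and the verification that $\HV^{\D}=h$ is the computation already performed in Proposition \ref{characterizationcodimension2}; the only genuinely non-formal step is confirming that (v) is the precise image of "$\aaa$ is a partition into $\ge2$ parts" under this correspondence.
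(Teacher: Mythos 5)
Your proposal follows essentially the same route as the paper. The paper also disposes of everything except one implication by quoting known facts --- (i)$\Leftrightarrow$(ii) from Theorem \ref{duality}, (iii)$\Leftrightarrow$(iv) from Hilbert--Burch, (iv)$\Leftrightarrow$(v) from Iarrobino \cite{Ia}, and (ii)$\Rightarrow$(iv) from levelness of $S/J(\D)$ --- and then notes that the one substantive implication, (v)$\Rightarrow$(ii), ``follows easily from Proposition \ref{characterizationcodimension2}''. That is exactly the step you single out and flesh out; your rearrangement of the easy implications into a cycle, with (iv)$\Rightarrow$(v) derived directly from Hilbert--Burch rather than by citing Iarrobino, is an immaterial variation.

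One caveat you should make explicit, because it is the difference between a true and a false statement: the condition your construction in (v)$\Rightarrow$(i) actually uses is concavity, $h_{i+1}\le 2h_i-h_{i-1}$, not the stated $h_{i+1}\le 2h_i+h_{i-1}$. Setting $c_j=h_{n-j-1}-h_{n-j}$ for $j=1,\ldots,n-1$ (so $c_1=h_s$ and $c_{n-1}=h_0-h_1=-1$), concavity of $h$ is precisely the statement that $(c_j)$ is nonincreasing, hence that $c_j+1$ is the number of parts of size at least $j$ of an honest partition of $n$ into $h_s+1\ge 2$ parts, and Proposition \ref{characterizationcodimension2} then recovers $h$ from the associated complete multipartite graph --- this is the clean, non-inductive form of your ``translation'', and it works. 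With the literal plus sign, however, (v)$\Rightarrow$(i) fails: $(1,2,3,1)$ satisfies $h_{i+1}\le 2h_i+h_{i-1}$ but is neither a pure $O$-sequence nor the $h$-vector of a codimension-two matroid (type one forces Gorenstein, hence complete intersection, hence a symmetric $h$-vector). Your own phrasings ``the first difference of the first difference is bounded'' and ``multiplicities of part-sizes are nonnegative'' already presuppose the minus sign --- Iarrobino's condition --- and with that reading your sketch is correct and matches the paper's argument.
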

\begin{proof}
The equivalence between (i) and (ii) follows by Theorem \ref{duality}, whereas (iii) is equivalent to (iv) by  the Hilbert-Burch theorem. The equivalence between (iv) and (v) was shown by Iarrobino in \cite{Ia}. 
As $S/J(\D)$ is level, and thus (ii) implies (iv), we just need to prove that (v) implies (ii)
and this follows easily from Proposition  \ref{characterizationcodimension2}.
\end{proof}

\begin{corollary}
If $\D$ is a $1$-dimensional matroid, then $\type(\D)=p-1$, where $\D$ is $p$-partite. 
\end{corollary}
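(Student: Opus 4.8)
The plan is to read off the value of $\type(\D)$ from the $h$-vector using the levelness of $S/J(\D)$, which has already been recorded: $\type(\D) = \HV^{\D}(s)$ where $s = \max\{i : \HV^{\D}(i) \neq 0\}$. Since $\D$ is a $1$-dimensional matroid on $[n]$, we know from the discussion following Proposition \ref{1dim} that $s = n - d = n - 2$, so it suffices to compute $\HV^{\D}(n-2)$ and show it equals $p - 1$.

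For this I would simply specialize the formula of Proposition \ref{characterizationcodimension2} at $k = n - 2$. There the sum
\[
\HV^{\D}(n-2) = \sum_{i=1}^{n-(n-2)-1} \AL_i(\D) = \sum_{i=1}^{1} \AL_i(\D) = \AL_1(\D)
\]
collapses to a single term. By the definition $\AL_1(\D) = |\{i : |A_i| \geq 1\}| - 1$; since every parallel class $A_j$ is nonempty (indeed we assume every vertex lies in $\D$), this count is exactly $p$, so $\AL_1(\D) = p - 1$. Hence $\type(\D) = \HV^{\D}(n-2) = p - 1$.

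There is essentially no obstacle here: the statement is an immediate corollary of Proposition \ref{characterizationcodimension2} together with the already-established facts that $S/J(\D)$ is level and that the last nonzero entry of $\HV^{\D}$ sits in degree $n - d = n - 2$. The only thing to double-check is that $\HV^{\D}(n-2)$ is genuinely nonzero — i.e.\ that $n - 2$ really is the index $s$ — but this is guaranteed by $p \geq 2$ (Proposition \ref{1dim}), which forces $\AL_1(\D) = p - 1 \geq 1$. One could alternatively run the induction on $n$ directly from Equation \eqref{H}, tracking how removing a vertex from the largest class affects the partition, but invoking Proposition \ref{characterizationcodimension2} is cleaner and requires no new work.
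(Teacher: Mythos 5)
Your proof is correct and matches the paper's intent: the corollary is stated without proof immediately after Proposition \ref{characterizationcodimension2}, precisely because it follows by evaluating that formula at $k=n-2$ (where the sum collapses to $\AL_1(\D)=p-1$) and invoking levelness of $S/J(\D)$ to identify the last $h$-vector entry with the type. Your check that $p-1\geq 1$ guarantees $n-2$ is indeed the socle degree is a sensible piece of due diligence.
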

\section{Stanley's  Conjecture}\label{sec3}

The main result of this section is Theorem \ref{thmdeltat}, in which we prove that Stanley's conjecture holds for certain matroids which we identify in a natural way. The first discussion of this section and Theorem \ref{stanleyforcomplete} are particular cases of the main result. They are the starting point of  the inductive procedure in the proof of Theorem \ref{thmdeltat}. For a better understanding of the construction which we present here, we will start with a closer  look at an already known case of Stanley's Conjecture \ref{conjd}, namely the codimension two case. In this first part we will concentrate on examples which hopefully provide the necessary intuition for the more technical proofs.\\

Consider a 1-dimensional matroid $\D$ on $[n]$, thus by Proposition \ref{1dim} it is a complete $p$-partite graph. Recall that we denote the partition sets of the graph by $A_i$ and for  $i=1,\ldots,p$ we have $a_i=|A_i|$. For simplicity, we assume for the moment that $a_1\le\ldots\le a_p$.
We will now present an inductive method to compute the $h$-vector of $\D$.

When we restrict to the first layer $A_1$, we obtain a $0$-dimensional matroid on $a_1$ vertices. It is clear that in this case $J(\D_1) = (x_1\cdots x_{a_1})$ so the $h$-vector of $\D_1$ is the vector of length $a_1$: $(1,1,\ldots,1)$.
Let $v_{2,1}$ be the first vertex of the parallel class $A_2$. This vertex will be a cone-point of the $1$-dimensional matroid $\D|_{A_1\cup \{v_{2,1}\}}$, so the $h$-vector will be the same as the one of $\D_1$. 
We will now use the recursive formula \eqref{H} to compute the $h$-vector of $\D_{1,2}$. By Lemma \ref{lemma2} we have 
\[\lk_{\D_{1,2}} v_{2,i} = A_1,\quad\forall \ v_{2,i} \in A_2.\]
So the $h$-vector of $\D|_{A_1\cup\{v_{2,1},v_{2,2}\}}$ is computed as follows:
\[\begin{array}{cccccc}
  &1&1&\ldots&1&1\\
1&1&1&\ldots&1&0\\
\hline
\rule{0pt}{2.5ex}1&2&2&\ldots&2&1
\end{array}\]
where the first row represents the $h$-vector of $\D|_{A_1\cup \{v_{2,1}\}}$, the second row represents the $h$-vector of the link, i.e. of $\D_1$. The last row is the $h$-vector of  $\D|_{A_1\cup\{v_{2,1},v_{2,2}\}}$. To compute the $h$-vector of $\D|_{A_1\cup\{v_{2,1},v_{2,2},v_{2,3}\}}$ we proceed in the same way.
All together we have to apply this procedure $a_2-1$ times. This can be done also directly in the following way:
\[\begin{array}{cccccccccc}
 & &  &  &  &1&\ldots&1&1&1\\
 & &  &  &1&1&\ldots&1&1&0\\
 & &  &1&1&1&\ldots&1&0&0\\
  &&&&&&&&&\\
  &  &  &  &  &  &\ldots&  &  &  \\
  &&&&&&&&&\\
1&1&1&\ldots&1&0&\ldots&0&0&0\\
\hline
\rule{0pt}{2.5ex}1&2&3&\ldots&a_1&a_1&\ldots&3&2&1\\
\end{array}\]

\vspace{2mm}

The  $h$-vector of $\D_{1,2,3}$ of is computed in a similar way. The only difference is that $v_{3,1}$ will no longer be a cone point. Thus the first row will be $h^{\D_{1,2}}$ and the number of shifted rows will be $a_3$.
 Repeating this procedure,  we can imagine that $h^\D$ is computed summing the columns of the  staircase in Figure \ref{fig1}.
 Notice that the last nonzero entry of $h^\D$  is $p-1$.

\begin{figure}[!h]
\setlength{\unitlength}{5mm}
\begin{picture}(22,18)
%%%%%%%%%%%%%%%%%%%%%%%%linii orizontale
\put(0,2){\line(1,0){15.75}}
\put(1,3){\line(1,0){16.75}}

\put(3,5){\line(1,0){14.75}}\put(18.25,5){\line(1,0){2.75}}
\put(4,6){\line(1,0){13.75}}\put(18.25,6){\line(1,0){2.75}}
\put(5,7){\line(1,0){12.75}}\put(18.25,7){\line(1,0){2.75}}
\put(6,8){\line(1,0){11.75}}
\put(7,9){\line(1,0){10.75}}\put(18.25,9){\line(1,0){2.75}}

\put(8,10){\line(1,0){9.75}}\put(18.25,10){\line(1,0){2.75}}
\put(10,12){\line(1,0){7.75}}\put(18.25,12){\line(1,0){2.75}}

\put(12,14){\line(1,0){5.75}}\put(18.25,14){\line(1,0){2.75}}
\put(13,15){\line(1,0){4.75}}\put(18.25,15){\line(1,0){2.75}}
\put(14,16){\line(1,0){3.75}}

\put(18.75,1){\line(1,0){3.25}}
\put(18.75,8){\line(1,0){3.25}}
\put(18.75,11){\line(1,0){3.25}}
\put(18.75,16){\line(1,0){3.25}}
%%%%%%%%%%%%%%%%%%%%%%%%linii verticale
\put(0,1){\line(0,1){1}}
\put(1,1){\line(0,1){2}}
\put(2,1){\line(0,1){2.75}}
\put(3,1){\line(0,1){2.75}}\put(3,4.25){\line(0,1){0.75}}
\put(4,1){\line(0,1){2.75}}\put(4,4.25){\line(0,1){1.75}}
\put(5,1){\line(0,1){2.75}}\put(5,4.25){\line(0,1){2.75}}
\put(6,1){\line(0,1){2.75}}\put(6,4.25){\line(0,1){3.75}}
\put(7,1){\line(0,1){2.75}}\put(7,4.25){\line(0,1){4.75}}
\put(8,1){\line(0,1){2.75}}\put(8,4.25){\line(0,1){5}}\put(8,9.75){\line(0,1){0.25}}
\put(9,1){\line(0,1){2.75}}\put(9,4.25){\line(0,1){5}}\put(9,9.75){\line(0,1){1.25}}
\put(10,1){\line(0,1){2.75}}\put(10,4.25){\line(0,1){5}}\put(10,9.75){\line(0,1){2.25}}
\put(11,1){\line(0,1){2.75}}\put(11,4.25){\line(0,1){5}}\put(11,9.75){\line(0,1){3}}
\put(12,1){\line(0,1){2.75}}\put(12,4.25){\line(0,1){5}}\put(12,9.75){\line(0,1){3}}\put(12,13.25){\line(0,1){0.75}}
\put(13,1){\line(0,1){2.75}}\put(13,4.25){\line(0,1){5}}\put(13,9.75){\line(0,1){3}}\put(13,13.25){\line(0,1){1.75}}
\put(14,1){\line(0,1){2.75}}\put(14,4.25){\line(0,1){5}}\put(14,9.75){\line(0,1){3}}\put(14,13.25){\line(0,1){3.75}}
\put(15,1){\line(0,1){2.75}}\put(15,4.25){\line(0,1){5}}\put(15,9.75){\line(0,1){3}}\put(15,13.25){\line(0,1){2.75}}
\put(16,1){\line(0,1){2.75}}\put(16,4.25){\line(0,1){5}}\put(16,9.75){\line(0,1){3}}\put(16,13.25){\line(0,1){2.75}}
\put(17,1){\line(0,1){2.75}}\put(17,4.25){\line(0,1){5}}\put(17,9.75){\line(0,1){3}}\put(17,13.25){\line(0,1){2.75}}
\put(19,1){\line(0,1){2.75}}\put(19,4.25){\line(0,1){5}}\put(19,9.75){\line(0,1){3}}\put(19,13.25){\line(0,1){2.75}}
\put(20,1){\line(0,1){2.75}}\put(20,4.25){\line(0,1){5}}\put(20,9.75){\line(0,1){3}}\put(20,13.25){\line(0,1){2.75}}
\put(21,1){\line(0,1){2.75}}\put(21,4.25){\line(0,1){5}}\put(21,9.75){\line(0,1){3}}\put(21,13.25){\line(0,1){3.75}}
%%%%%%%%%%%%%%%%%%%%%%%%linii de masura
\put(21.5,4){\vector(0,-1){3}}
\put(21.5,5){\vector(0,1){3}}
\put(21.3,4.5){$a_p$}

\put(21.5,13){\vector(0,-1){2}}
\put(21.5,14){\vector(0,1){2}}
\put(21.3,13.5){$a_2$}

\put(17,16.5){\vector(-1,0){3}}
\put(18,16.5){\vector(1,0){3}}
\put(17.2,16.3){$a_1$}
%%%%%%%%%%%%%%%%%%%%%%%%linii orizontale GROASE
\linethickness{0.5mm}
\put(0,1){\line(1,0){17.75}}\put(18.25,1){\line(1,0){2.75}}
\put(6,8){\line(1,0){11.75}}\put(18.25,8){\line(1,0){2.75}}
\put(9,11){\line(1,0){8.75}}\put(18.25,11){\line(1,0){2.75}}
%%%%%%%%%%%%%%%%%%%%%%%%partea neagra
\linethickness{5mm}
\put(20.5,1){\line(0,1){2.75}}\put(20.5,4.25){\line(0,1){2.75}}
\put(19.5,1){\line(0,1){2.75}}\put(19.5,4.25){\line(0,1){1.75}}
\put(16.5,1){\line(0,1){2}}
\put(15.5,1){\line(0,1){1}}

\put(19.5,8){\line(0,1){1.25}}
\put(20.5,8){\line(0,1){1.25}}\put(20.5,9.75){\line(0,1){0.25}}

\put(20.5,11){\line(0,1){1.75}}\put(20.5,13.25){\line(0,1){1.75}}
\put(19.5,11){\line(0,1){1.75}}\put(19.5,13.25){\line(0,1){0.75}}

\linethickness{3.75mm}
\put(17.375,1){\line(0,1){2.75}}
\put(18.625,1){\line(0,1){2.75}}
\put(18.625,4.25){\line(0,1){0.75}}
\put(17.375,11){\line(0,1){1}}
\put(18.625,11){\line(0,1){1.75}}
\put(18.625,8){\line(0,1){1}}
%%%%%%%%%%%%%%%%%%%%%%%%  1
\multiput(0.25,1.25)(1,0){16}{$1$}
\multiput(1.25,2.25)(1,0){16}{$1$}

\multiput(4.25,5.25)(1,0){13}{$1$}
\multiput(5.25,6.25)(1,0){12}{$1$}\multiput(19.25,6.25)(1,0){1}{$1$}
\multiput(6.25,7.25)(1,0){11}{$1$}\multiput(19.25,7.25)(1,0){2}{$1$}
\multiput(7.25,8.25)(1,0){10}{$1$}

\multiput(9.25,10.25)(1,0){8}{$1$}\multiput(19.25,10.25)(1,0){2}{$1$}
\multiput(10.25,11.25)(1,0){7}{$1$}\multiput(19.25,11.25)(1,0){2}{$1$}

\multiput(13.25,14.25)(1,0){4}{$1$}\multiput(19.25,14.25)(1,0){2}{$1$}
\multiput(14.25,15.25)(1,0){3}{$1$}\multiput(19.25,15.25)(1,0){2}{$1$}

\multiput(2.35,3.75)(1,0){19}{$\cdot$}
\multiput(8.35,9.25)(1,0){13}{$\cdot$}
\multiput(11.35,12.75)(1,0){10}{$\cdot$}
\multiput(17.875,1.25)(0,1){15}{$\cdot$}
\end{picture}
\caption{Computing $h^\D$ for $d=2$}\label{fig1}
\end{figure}
In Figure \ref{scara2} we can see one example of how the corresponding order ideal is constructed in the case when $\D$ is the 1-dimensional matroid on 15 vertices, with 4-partition (3,3,4,5). Notice that the columns contain monomials of the same degree and that the exponent  of $x$ is constant on the rows.
\begin{figure}[!h]
\setlength{\unitlength}{7mm}
\begin{picture}(15,13)
%%%%%%%%%%%%%%%%%%%%%%%%linii orizontale
\put(1,2){\line(1,0){14}}
\put(2,3){\line(1,0){13}}
\put(3,4){\line(1,0){12}}
\put(4,5){\line(1,0){11}}

\put(6,7){\line(1,0){9}}
\put(7,8){\line(1,0){8}}
\put(8,9){\line(1,0){7}}

\put(10,11){\line(1,0){5}}
\put(11,12){\line(1,0){4}}
\put(12,13){\line(1,0){3}}

\put(1,0){\line(0,1){2}}
\put(2,0){\line(0,1){3}}
\put(3,0){\line(0,1){4}}
\put(4,0){\line(0,1){5}}
\put(5,0){\line(0,1){6}}
\put(6,0){\line(0,1){7}}
\put(7,0){\line(0,1){8}}
\put(8,0){\line(0,1){9}}
\put(9,0){\line(0,1){10}}
\put(10,0){\line(0,1){11}}
\put(11,0){\line(0,1){12}}
\put(12,0){\line(0,1){13}}
\put(13,0){\line(0,1){13}}
\put(14,0){\line(0,1){13}}
\put(15,0){\line(0,1){13}}

\linethickness{0.5mm}
\put(0,1){\line(1,0){15}}
\put(5,6){\line(1,0){10}}
\put(9,10){\line(1,0){6}}

\linethickness{7mm}
\put(14.5,1){\line(0,1){4}}
\put(13.5,1){\line(0,1){3}}
\put(12.5,1){\line(0,1){2}}
\put(11.5,1){\line(0,1){1}}

\put(14.5,6){\line(0,1){3}}
\put(13.5,6){\line(0,1){2}}
\put(12.5,6){\line(0,1){1}}

\put(14.5,10){\line(0,1){2}}
\put(13.5,10){\line(0,1){1}}

{\tiny
\put(1.45,1.35){$1$}
\put(2.4,1.35){$y$}
\put(3.35,1.35){$y^{2}$}
\put(4.35,1.35){$y^{3}$}
\put(5.35,1.35){$y^{4}$}
\put(6.35,1.35){$y^{5}$}
\put(7.35,1.35){$y^{6}$}
\put(8.35,1.35){$y^{7}$}
\put(9.35,1.35){$y^{8}$}
\put(10.35,1.35){$y^{9}$}

\put(2.4,2.35){$x$}
\put(3.3,2.35){$xy$}
\put(4.25,2.35){$xy^{2}$}
\put(5.25,2.35){$xy^{3}$}
\put(6.25,2.35){$xy^{4}$}
\put(7.25,2.35){$xy^{5}$}
\put(8.25,2.35){$xy^{6}$}
\put(9.25,2.35){$xy^{7}$}
\put(10.25,2.35){$xy^{8}$}
\put(11.25,2.35){$xy^{9}$}

\put(3.35,3.35){$x^{2}$}
\put(4.25,3.35){$x^{2}y$}
\put(5.15,3.35){$x^{2}y^{2}$}
\put(6.15,3.35){$x^{2}y^{3}$}
\put(7.15,3.35){$x^{2}y^{4}$}
\put(8.15,3.35){$x^{2}y^{5}$}
\put(9.15,3.35){$x^{2}y^{6}$}
\put(10.15,3.35){$x^{2}y^{7}$}
\put(11.15,3.35){$x^{2}y^{8}$}
\put(12.15,3.35){$x^{2}y^{9}$}

\put(4.35,4.35){$x^{3}$}
\put(5.15,4.35){$x^{3}y$}
\put(6.15,4.35){$x^{3}y^{2}$}
\put(7.15,4.35){$x^{3}y^{3}$}
\put(8.15,4.35){$x^{3}y^{4}$}
\put(9.15,4.35){$x^{3}y^{5}$}
\put(10.15,4.35){$x^{3}y^{6}$}
\put(11.15,4.35){$x^{3}y^{7}$}
\put(12.15,4.35){$x^{3}y^{8}$}
\put(13.15,4.35){$x^{3}y^{9}$}

\put(5.35,5.35){$x^{4}$}
\put(6.15,5.35){$x^{4}y$}
\put(7.15,5.35){$x^{4}y^{2}$}
\put(8.15,5.35){$x^{4}y^{3}$}
\put(9.15,5.35){$x^{4}y^{4}$}
\put(10.15,5.35){$x^{4}y^{5}$}
\put(11.15,5.35){$x^{4}y^{6}$}
\put(12.15,5.35){$x^{4}y^{7}$}
\put(13.15,5.35){$x^{4}y^{8}$}
\put(14.15,5.35){\color{red}$x^{4}y^{9}$}

\put(6.35,6.35){$x^{5}$}
\put(7.15,6.35){$x^{5}y$}
\put(8.15,6.35){$x^{5}y^{2}$}
\put(9.15,6.35){$x^{5}y^{3}$}
\put(10.15,6.35){$x^{5}y^{4}$}
\put(11.15,6.35){$x^{5}y^{5}$}

\put(7.35,7.35){$x^{6}$}
\put(8.15,7.35){$x^{6}y$}
\put(9.15,7.35){$x^{6}y^{2}$}
\put(10.15,7.35){$x^{6}y^{3}$}
\put(11.15,7.35){$x^{6}y^{4}$}
\put(12.15,7.35){$x^{6}y^{5}$}

\put(8.35,8.35){$x^{7}$}
\put(9.15,8.35){$x^{7}y$}
\put(10.15,8.35){$x^{7}y^{2}$}
\put(11.15,8.35){$x^{7}y^{3}$}
\put(12.15,8.35){$x^{7}y^{4}$}
\put(13.15,8.35){$x^{7}y^{5}$}

\put(9.35,9.35){$x^{8}$}
\put(10.15,9.35){$x^{8}y$}
\put(11.15,9.35){$x^{8}y^{2}$}
\put(12.15,9.35){$x^{8}y^{3}$}
\put(13.15,9.35){$x^{8}y^{4}$}
\put(14.15,9.35){\color{red}$x^{8}y^{5}$}

\put(10.35,10.35){$x^{9}$}
\put(11.15,10.35){$x^{9}y$}
\put(12.15,10.35){$x^{9}y^{2}$}

\put(11.35,11.35){$x^{10}$}
\put(12.15,11.35){$x^{10}y$}
\put(13.1,11.35){$x^{10}\!y^{2}$}

\put(12.35,12.35){$x^{11}$}
\put(13.15,12.35){$x^{11}y$}
\put(14.1,12.35){${\color{red}x^{11}\!y^{2}}$}
}

\put(0.25,0.25){$h:$}
\put(1.4,0.25){$1$}
\put(2.4,0.25){$2$}
\put(3.4,0.25){$3$}
\put(4.4,0.25){$4$}
\put(5.4,0.25){$5$}
\put(6.4,0.25){$6$}
\put(7.4,0.25){$7$}
\put(8.4,0.25){$8$}
\put(9.4,0.25){$9$}
\put(10.3,0.25){$10$}
\put(11.3,0.25){$10$}
\put(12.4,0.25){$9$}
\put(13.4,0.25){$6$}
\put(14.4,0.25){$3$}

\end{picture}
\caption{One order ideal which produces $h^\D$ the $4$-partition $(3,3,4,5)$}\label{scara2}
\end{figure}
Depending on the order of the parallel classes we can build a total of 12 different staircases, each one producing an order ideal. Eliminating the symmetry given by exchanging $x$ and $y$, we are left with 6 different order ideals with the right $f$-vector. For example ordering the partition as $(4,3,3,5)$ we obtain the order ideal generated by $\{x^4y^9,x^7y^6,x^{10}y^3\}$.

In higher dimensions the picture becomes more complicated. One can either imagine $d$-dimensional staircases, where each cube has value 1, or 2-dimensional staircases, where each row is the $h$-vector of the link of a parallel class. As we already saw, the order of the $a_i$'s plays no role in the computation of $h^\D$, providing us with several ways to construct an order ideal with the same $f$-vector. A complicated example in dimension 2, with 6-partite 1-skeleton shows that unfortunately with this method there is  no ``canonical'' choice. By canonical we understand a construction that should be independent of the values of the $a_i$'s.  \\

There is one case in which the choice of the order ideal is unique, namely the case when $d=p$. As we will see in Remark \ref{Gor=ci}, this is equivalent to $J(\D)$ being Gorenstein.
\begin{lemma}\label{d=p}
If $\D$ is a $(d-1)$-dimensional, $d$-partite matroid (so $d=p$) with partition $(a_1,\ldots, a_d)$, then 
\[h^\D = f(\langle y_1^{a_1-1}\cdots y_d^{a_d-1}\rangle).\]
\end{lemma}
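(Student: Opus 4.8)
The plan is to induct on $p=d$, using the recursive formula \eqref{H} together with the structural results of Section \ref{sec2}, and to track not just the $h$-vector but the explicit order ideal producing it. Write $n=a_1+\ldots+a_d$, so that the claimed order ideal $\langle y_1^{a_1-1}\cdots y_d^{a_d-1}\rangle$ lives in $\Bbbk[y_1,\ldots,y_d]$ and has exactly one maximal monomial; its $f$-vector in degree $k$ counts lattice points $(e_1,\ldots,e_d)$ with $0\le e_i\le a_i-1$ and $\sum e_i=k$, for $k=0,\ldots,n-d$. One first checks that this is the right length: $n-d=\sum(a_i-1)$ agrees with the remark from \cite[p.~94]{St} quoted in Section \ref{sec2}, since $\D$ is $(d-1)$-dimensional on $[n]$.

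\textbf{Base case and inductive step.} For $d=1$ the matroid is a single parallel class $A_1$ on $a_1$ vertices, $J(\D)=(x_1\cdots x_{a_1})$, and $h^{\D}=(1,1,\ldots,1)$ of length $a_1$, which is exactly $f(\langle y_1^{a_1-1}\rangle)$. For the inductive step, pick a vertex $v\in A_d$ (reordering so that $A_d$ is the class we peel off; by Lemma \ref{lemma2} the choice inside the class is immaterial, and since $d=p\ge 2$ and $\D$ is $(d-1)$-dimensional, $v$ is not a cone point as long as $a_d\ge 2$, while if $a_d=1$ one instead peels a vertex from a class of size $\ge 2$, which must exist unless $n=d$ where the statement is trivial). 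Then $\D\setminus v$ is the $(d-1)$-dimensional $d$-partite matroid with partition $(a_1,\ldots,a_{d-1},a_d-1)$ — still with $p=d$ parts — and $\lk_\D v$ is, by Lemma \ref{lemma2} and the analysis in Proposition \ref{characterizationcodimension2}'s style of argument, the $(d-2)$-dimensional $(d-1)$-partite matroid on $[n]\setminus A_d$ with partition $(a_1,\ldots,a_{d-1})$. By induction both of these are of the form covered by the lemma (for $\D\setminus v$ this is again a $d=p$ situation with one $a_i$ decreased by one; for $\lk_\D v$ it is the $d=p$ situation one dimension down). Feeding the two inductive order ideals into \eqref{H},
\[
h^{\D}(k)=h^{\D\setminus v}(k-1)+h^{\lk_\D v}(k),
\]
translates precisely into the lattice-point recursion $N_d(k;a_1,\ldots,a_d)=N_d(k-1;a_1,\ldots,a_{d-1},a_d-1)+N_{d-1}(k;a_1,\ldots,a_{d-1})$, where $N$ counts points in the box as above: a lattice point with $e_d\ge 1$ is counted by the first summand (after shifting $e_d\mapsto e_d-1$, noting the box constraint becomes $e_d\le a_d-2$), and a lattice point with $e_d=0$ by the second. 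Hence $h^{\D}=f(\langle y_1^{a_1-1}\cdots y_d^{a_d-1}\rangle)$.

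\textbf{Main obstacle.} The genuinely substantive point is the identification $\lk_\D v = $ the complete $(d-1)$-partite matroid on parallel classes $A_1,\ldots,A_{d-1}$; this is where matroid structure is essential and one must invoke Lemma \ref{lemma1} and Lemma \ref{lemma2} (the link of any vertex of a parallel class depends only on the class, and the link is exactly the restriction to the remaining classes, which inherits complete $(d-1)$-partiteness of dimension $d-2$). Everything else — verifying the degree shift in \eqref{H} matches the shift in the box recursion, and handling the $a_d=1$ bookkeeping so that the peeled vertex is not a cone point — is routine once this is in place. An alternative, perhaps cleaner, route avoiding case analysis on $a_d=1$ is to observe directly that $J(\D)$ is the image under the flattening map $\phi$ of Proposition \ref{betti} of $J(\Ds)$, where $\Ds$ is the uniform matroid $U_{d-1,d}$; since $J(U_{d-1,d})=(y_1\cdots y_d)$ is principal, a direct monomial computation of the quotient $\Bbbk[\D]$-Hilbert series via $\phi$ recovers the box count, but the inductive argument above is self-contained and matches the figure-driven intuition already developed in the text.
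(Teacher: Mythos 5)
Your inductive argument is correct, but it takes a genuinely different route from the paper. The paper's proof is a two-line structural observation: when $p=d$ every facet is a transversal of the parallel classes, so $A_1,\ldots,A_d$ are precisely the basic vertex covers, hence $J(\D)=(m_1,\ldots,m_d)$ with $m_i=\prod_{j\in A_i}x_j$ is a complete intersection with generator degrees $a_1,\ldots,a_d$, and the $h$-vector of such a complete intersection is the box count $\prod_i(1+t+\cdots+t^{a_i-1})$, i.e.\ $f(\langle y_1^{a_1-1}\cdots y_d^{a_d-1}\rangle)$. Your proof instead runs the deletion--link recursion \eqref{H} and matches it against the lattice-point recursion for the box; the substantive steps (that a vertex of a class of size $\geq 2$ is not a cone point, that $\D\setminus v$ stays in the $p=d$ class with one $a_i$ decreased, and that $\lk_\D v$ is the complete $(d-1)$-partite matroid on $A_1,\ldots,A_{d-1}$ via Lemma \ref{lemma1}) are all verified or correctly reducible to the cited lemmas, and the double induction on $d$ and $n$ is well-founded with the $a_d=1$ and $n=d$ cases handled. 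What the paper's approach buys is brevity and the stronger structural fact that $J(\D)$ is a complete intersection, which is reused later (Remark \ref{Gor=ci}); what yours buys is uniformity with the method of Theorem \ref{stanleyforcomplete}, of which this lemma is the induction base, and it does so without circularity since you never invoke that theorem.

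One correction to your closing aside: the simplification $\Ds$ of a $(d-1)$-dimensional $d$-partite matroid is the full simplex on $d$ vertices, not $U_{d-1,d}$, and its cover ideal is $(y_1,\ldots,y_d)$, which is not principal (the principal ideal $(y_1\cdots y_d)$ is the \emph{Stanley--Reisner} ideal of $U_{d-1,d}$). Pushing $(y_1,\ldots,y_d)$ through the flat map $\phi$ of Proposition \ref{betti} gives exactly the complete intersection $(m_1,\ldots,m_d)$ --- which is the paper's proof --- so the alternative route you gesture at is sound once stated correctly, but not for the reason you give. Since that remark is not load-bearing, your main argument stands.
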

\begin{proof}
The minimal generators of $J(\D)$ are the monomials corresponding to the basic covers of $\D$. In this situation, $A_1,\ldots ,A_d$ are the unique basic covers of $\D$, so $J(\D)$ is a complete intersection with $d$ generators of degrees $a_1,\ldots ,a_d$. The conclusion follows because the $h$-vector of a complete intersection depends only on the degree of its minimal generators. 
\end{proof}

We will now define  a class of matroids and prove that the Stanley conjecture holds for this class. When one fixes the dimension and the $p$-partition of the vertex set, these matroids will have all the admissible faces, thus they are in a sense a generalization of the Gorenstein matroids.  
\begin{definition}\label{def complete}
Let $\D$ be a $d-1$-dimensional matroid on $[n]$ with $p$-partite 1-skeleton. 
We say that $\D$ is a \emph{complete $p$-partite matroid}  if
\[A_{i_1}\ldots A_{i_d} \in \D,\quad \textup{for any subset~} \{i_1,\ldots,i_d\}\subset \{1,\ldots,p\}.\]
\end{definition}
Whenever $p$ is clear from the context, we will just call $\D$ \emph{complete}. Notice that a complete matroid is uniquely determined by the cardinalities of the parallel classes $a_1,\ldots,a_p$ and by $d$. 
It is  also clear that a matroid is complete if\mbox{}f  its simplification $\Ds$ is the \emph{uniform matroid} $U_{d,p}$ (see \cite[p. 17]{Ox11}). 
Complete matroids also generalize \emph{partition matroids} (see \cite[p. 18]{Ox11}), which correspond to the case $p=d$. 
In Proposition \ref{1dim} we proved that for $d=2$ all matroids are complete. For $d> 2$ this is no longer true, as the following easy example shows.
\begin{example}
Let $n=4$ and $\D = \{ \{1,2,3\},\{1,2,4\},\{1,3,4\}\}$. It is clear that $\D$ is a matroid. The 1-skeleton of $\D$ is
\[\D^1 = \{\{1,2\},\{1,3\},\{1,4\},\{2,3\},\{2,4\},\{3,4\}\} = K_4,\]
so it is a complete 4-partite graph. This means that  $a_1=a_2=a_3=a_4=1$. Clearly this matroid is not complete, as the face $\{2,3,4\}$ is missing. The complete 2-dimensional matroid corresponding to the above $a_i$'s is $\D' = \{ \{1,2,3\},\{1,2,4\},\{1,3,4\},\{2,3,4\}\}$.
\end{example}
\begin{remark}\label{complete link}
Let $\D$ be a complete $p$-partite matroid. We have
\begin{itemize}
\item[(i)] For any subset of vertices $M\subset [n]$ the restriction of $\D$ to $M$ is also a complete matroid.
\item[(ii)] For any parallel class $A_i$, the link in $\D$ of any of its vertices $\lk_\D A_i$ is also a complete matroid.
\end{itemize}
\end{remark}
\begin{theorem}\label{stanleyforcomplete}
Let $\D$ be a complete, $(d-1)$-dimensional matroid with $p$-partition of the 1-skeleton $A_1,\ldots,A_p$. For $i=1,\ldots,p$ we denote by $a_i=|A_i|$. Let $\G$ be the pure multi-complex on $\{y_1,\ldots,y_d\}$ with facets
\[ \F(\G)=\{ y_1^{(\sum_{i=l_0}^{l_1-1}a_i) -1}y_2^{(\sum_{i=l_1}^{l_2-1}a_i) -1}\cdots y_d^{(\sum_{i=l_{d-1}}^{p}a_i) -1}~:~ \forall~1=l_0<l_1<l_2<\ldots<l_{d-1}\le p\}.\]
Then we have that $$\HV^\D = f(\G),$$ where $\HV^\D$ is the $h$-vector of the algebra $S/J(\D)$.
\end{theorem}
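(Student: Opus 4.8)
The plan is to prove the identity $\HV^\D = f(\G)$ by induction, using the recursive formula \eqref{H} to peel off one vertex at a time and matching the combinatorics on the two sides. First, I would reduce to the simplified matroid: by Proposition \ref{betti} the Betti numbers, hence (via the level property) also the $h$-vector, are unchanged when we pass from $\D$ to $\Ds$, but here a more direct observation is available — applying \eqref{H} successively to the extra vertices in a parallel class $A_i$ exactly reproduces the ``staircase'' picture of Figure \ref{fig1}, so I would instead argue that the formula for $f(\G)$ is obtained from the $d=p$ (complete intersection) case by the same shifting operation. Concretely, the $d=p$ case is Lemma \ref{d=p}: when the simplification is $U_{d,d}$, $J(\D)$ is a complete intersection of degrees $a_1,\ldots,a_d$, and $\HV^\D$ is the $f$-vector of $\langle y_1^{a_1-1}\cdots y_d^{a_d-1}\rangle$, which is the single-facet case $p=d$ of the claimed $\F(\G)$.

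For the inductive step I would induct on $p-d \ge 0$ (equivalently on the number of vertices, keeping $d$ fixed), the base case $p=d$ being Lemma \ref{d=p}. Given a complete $(d-1)$-dimensional $p$-partite matroid with $p>d$, pick a vertex $v$ in a parallel class, say $v \in A_p$. By Remark \ref{complete link}, both $\lk_\D v$ and $\D\setminus v$ are again complete matroids; $\lk_\D v$ is $(d-2)$-dimensional on the $p-1$ classes $A_1,\ldots,A_{p-1}$, while $\D\setminus v$ is $(d-1)$-dimensional on the classes $A_1,\ldots,A_{p-1},A_p\setminus\{v\}$ (the last possibly empty, in which case $\D\setminus v$ has $p-1$ classes — one must check $v$ is not a cone point, which holds since $p>d$ forces some facet to avoid $A_p$). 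Formula \eqref{H} then gives $\HV^\D(k)=\HV^{\D\setminus v}(k-1)+\HV^{\lk_\D v}(k)$. Applying the induction hypothesis to both smaller matroids, the right-hand side becomes $f(\G')$ shifted by one plus $f(\G'')$, where $\G'$ and $\G''$ are the multicomplexes prescribed by the theorem for $\D\setminus v$ and $\lk_\D v$ respectively.

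The heart of the proof — and the step I expect to be the main obstacle — is the purely combinatorial identity showing that $y_d\cdot\G' \ \sqcup\ \G''$ (a disjoint union of monomials, which one must also check is again an order ideal) has the correct $f$-vector and, more importantly, that it equals $f(\G)$. Here $\G''$ corresponds to the compositions $1=l_0<\cdots<l_{d-2}\le p-1$ with the exponent in the last variable being $(\sum_{i=l_{d-2}}^{p-1}a_i)-1$ (one fewer class, hence $d-1$ variables), while $y_d\cdot\G'$ corresponds to compositions $1=l_0<\cdots<l_{d-1}\le p-1$ for $\D\setminus v$ with its last class having size $a_p-1$. The claim is that these two families biject with the compositions $1=l_0<\cdots<l_{d-1}\le p$ defining $\F(\G)$: those with $l_{d-1}\le p-1$ come from $\G'$ (the final block $\sum_{i=l_{d-1}}^p a_i$ versus $\sum_{i=l_{d-1}}^{p-1}(a_i) + a_p$ — matching the $y_d$-shift and the reduced $a_p-1$), and those with $l_{d-1}=p$ come from $\G''$ (the block $\{A_p\}$ alone contributes exponent $a_p-1$, and the remaining $d-1$ blocks of $\{A_1,\ldots,A_{p-1}\}$ give $\G''$). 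I would verify this bijection respects degrees, so that $f_k$ agrees on the nose, and separately confirm that the union of the two shifted order ideals is genuinely an order ideal (no overlaps and closed under division) — the disjointness being guaranteed because monomials in $y_d\cdot\G'$ have $y_d$-degree $\ge a_p-1 \ge$ something, while $\G''$ has strictly smaller last-variable degree in the relevant range, or more cleanly by tracking which monomials are divisible by $y_d$. Once this bookkeeping is done, $\HV^\D = f(\G)$ follows, completing the induction.
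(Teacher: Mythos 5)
Your proposal is correct and follows essentially the same route as the paper: a double induction anchored at the complete--intersection case $p=d$ (Lemma \ref{d=p}), the recursion \eqref{H} applied to the vertices of $A_p$, and a partition of $\G$ according to divisibility by $y_d$ that matches $y_d\cdot\G'$ and $\G''$ to the two terms of the recursion. The only differences are cosmetic: the paper removes the whole class $A_p$ in one batch (applying \eqref{H} $a_p$ times and stratifying $\G$ by the exact exponent of $y_d$ from $0$ to $a_p-1$ versus $\ge a_p$), which is just your single-vertex step iterated, and your induction should be declared on the number of vertices together with $d$ rather than on $p-d$, since deleting one vertex of $A_p$ leaves $p-d$ unchanged when $a_p\ge 2$ (your parenthetical shows you intend exactly this, and the monomial-level ``divisible by $y_d$ or not'' bijection you fall back on is the correct and sufficient form of the combinatorial step).
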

 Before we start the proof, let us make a few easy remarks and  introduce some notation.
For each  $i \in \{d,\ldots,p\}$, we denote the link of the $i$-parallel class in the restriction of $\D$ to the first $i$ parallel classes by
\[ L_i= \lk_{\D_{1,\ldots,i}}A_i.\]
Notice that $L_i$ is the $(d-2)$-skeleton of $\D_{1,\ldots,i-1}$. We will write $r(i)$ for the length of the $h$-vector of $L_i$. 
As the number of vertices of $L_i$ is $a_1+\ldots+a_{i-1}$ and its dimension is $d-2$, we have that 
\[r(i) = 2-d +\sum_{j=1}^{i-1} a_j.\]

\begin{proof}
We will prove this theorem by simultaneous  induction on $d$ and $p-d$. The case $d=1$ is trivially true and by Lemma \ref{d=p} we know  that the theorem is true for $p=d$. 

For each $i$, denote by $\G^{L_i}$ the pure multi-complex corresponding to $L_i$ which is given by the inductive hypothesis. 
We assume now that $p>d>1$ and that $\HV^{\D_{1,\ldots,p-1}} =f(\G_{p-1})$, where 
\[ \G_{p-1}= \langle y_1^{(\sum_{i=1}^{l_1-1}a_i) -1}y_2^{(\sum_{i=l_1}^{l_2-1}a_i) -1}\cdots y_{d}^{(\sum_{i=l_{d-2}}^{p}a_i) -1}~:~ \forall~1<l_1<l_2<\ldots<l_{d-1}\le p-1\rangle.\]
We will use  $\HV^{\D_{1,\ldots,p-1}}$ and $\HV^{L_p}$ to compute $\HV^\D$ via the formula given in \eqref{H}. Clearly this formula has to be applied $a_p$ times, once for every vertex in $A_p$. So we obtain
\begin{equation}\label{inductcomplete}
\HV^\D(j) = \HV^{\D_{1,\ldots,p-1}}(j-a_p)+ \sum_{k=0}^{a_p-1}\HV^{L_p}(j-k),
\end{equation}
for all $0\le j \le 1-d+\sum_{k=1}^p a_k$.
To conclude we just need to check that the $f$-vectors of $\G$,  $\G_{p-1}$ and  $\G^{L_p}$ satisfy the same formula. To this purpose, for any $j\in \ZZ$, let us denote $F_j=\{ M\in \G~:~\deg M = j\}$, $G_j=\{ M\in \G^{L_p}~:~\deg M = j\}$ and $H_j=\{ M\in \G_{p-1}~:~\deg M = j\}$. Let us furthermore partition $F_j$ as
\[\displaystyle F_j=F_{j,\geq a_p}\bigcup \left(\bigcup_{k=0}^{a_p-1}F_{j,k}\right),\]
where $F_{j,\geq a_p}=\{M\in F_j~:~y_d^{a_p}\mid M\}$ and $F_{j,k}=\{M\in F_j~:~y_d^{k}\mid M \mbox{ and }y_d^{k+1}\nmid M\}$. It is easy to check the bijections of sets
\begin{eqnarray*}
G_{j-a_p} & \xrightarrow{\cong} & F_{j,\geq a_p} \\
M & \mapsto & M\cdot y_d^{a_p}
\end{eqnarray*}
and, for all $k=0,\ldots, a_p-1$,
\begin{eqnarray*}
H_{j-k} & \xrightarrow{\cong} & F_{j,k} \\
M & \mapsto & M\cdot y_d^{k}
\end{eqnarray*}
Therefore we get the formula
\[f_j(\G)=f_{j-a_p}(\G_{p-1})+\sum_{k=0}^{a_p-1}f_{j-k}(\G^{L_p}) \ \ \ \forall \ j\in \ZZ,\]
which, together with \eqref{inductcomplete}, yields the conclusion by induction. 
\end{proof}

Fixing two positive integers $d$ and $n$ and a vector $\aaa = (a_1,\ldots ,a_p)\in (\ZZ_+)^p$ such that $p\geq d$, $a_1+\ldots +a_p = n$, we introduce the class 
\[\M(d,p,\aaa),\] 
consisting of all $(d-1)$-dimensional matroids with  $p$-partite  $1$-skeleton, where the partition sets $A_i$ have cardinality $a_i$ for all $i=1,\ldots ,p$. Note that the classes $\M(d,p,\aaa)$ depend only on the set $\{a_1,\ldots ,a_p\}$. That is, $\M(d,p,\aaa)$ coincides with $\M(d,p,\aaa^{\sigma})$ for any permutation $\sigma$ of $p$ elements ($\aaa^{\sigma}$ means $(a_{\sigma(1)},\ldots ,a_{\sigma(p)})$). Furthermore notice that, if $d=2$ or $p=d$, $\M(d,p,\aaa)$ consists of a single matroid, but this happens only in these cases. To see this, it is enough to consider for $t=0,\ldots ,d-2$, the following simplicial complexes
\begin{equation}\label{deltat}
\D_t(d,p,\aaa)=\langle \{v_1,v_2,\ldots ,v_t,v_{i_1},\ldots ,v_{i_{d-t}}\} ~:~ t< i_1< \ldots <i_{d-t}\leq p \mbox{ where }v_i\in A_i \rangle.
\end{equation}
It is easy to see  that $\D_t(d,p,\aaa)$ are elements of $\M(d,p,\aaa)$. Moreover, one can show that, if $p>d$, they are not isomorphic pairwise - the easiest way to show this is to notice that they have a different number of facets. The matroid $\D_0(d,p,\aaa)$ is just the complete $p$-partite matroid whose partition sets $A_1,\ldots ,A_p$ satisfy $|A_i|=a_i$ for all $i=1,\ldots ,p$. Notice that, apart from the case  $t=0$, the matroid $\D_t(d,p,\aaa)$  depends on the vector $\aaa$, not just on the set of its entries. 

\begin{remark} For every $t, d, n$ and $\aaa$ as above the matroids $\D_t(d,p,\aaa)$ are representable. To see this it is enough to notice that their simplification satisfies
\[\Ds_t(d,p,\aaa) = \{v_{1},\ldots,v_{t}\}\ast U_{d-t,p-t} = \langle \{v_{1},\ldots,v_{t}\}\cup F~:~ F \in \F(U_{d-t,p-t})\rangle ,\]
where the $v_i$'s are fixed vertices and $U_{d-t,p-t}$ is the uniform matroid of rank $d-t$ on $p-t$ vertices.
Thus, a representation of  $\D_t(d,p,\aaa)$ is obtained by taking  $a_i$ copies of the $i$th column ($i=1,\ldots,p$)  in a representation of $\{v_{1},\ldots,v_{t}\}\ast U_{d-t,p-t}$.
Furthermore, it is easy to check that in order to obtain a representation over a field $\mathbb{F}$, its cardinality has to be ``large enough''.
\end{remark}
As a first thing, we want to show that Stanley's conjecture holds true for all $\D_t(d,p,\aaa)$.

\begin{theorem}\label{thmdeltat}
Let $d,p\in \NN$ be such that $p\geq d\geq 1$ and $\aaa = (a_1,\ldots ,a_p)\in (\ZZ_+)^p$. Then $h^{\D_t(d,p,\aaa)}$ is a pure {\it O}-sequence for all $t=0,\ldots ,d-2$.
\end{theorem}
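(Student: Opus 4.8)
The plan is to reduce $\D_t(d,p,\aaa)$ to the complete case treated in Theorem \ref{stanleyforcomplete} by an inductive ``peeling'' of the $t$ cone classes, exactly paralleling the way Theorem \ref{stanleyforcomplete} was proved by peeling one parallel class at a time. First I would record the key structural observation from the preceding remark: $\Ds_t(d,p,\aaa) = \{v_1,\ldots,v_t\}\ast U_{d-t,p-t}$, so the vertices $v_1,\ldots,v_t$ form cone classes of $\D_t(d,p,\aaa)$, while the remaining classes $A_{t+1},\ldots,A_p$ behave like a complete $(p-t)$-partite matroid of dimension $(d-t-1)$. In particular, for a vertex $v\in A_1$ (the first cone class, assuming $t\ge 1$), the link $\lk_{\D_t(d,p,\aaa)}v$ is again of the form $\D_{t-1}(d-1,p-1,\aaa')$ where $\aaa'=(a_2,\ldots,a_p)$, and the restriction $\D_t(d,p,\aaa)\setminus v$ has partition $(a_1-1,a_2,\ldots,a_p)$ with the same combinatorial type. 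This lets me set up a double induction.

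The core of the argument is an induction on $t$, with the base case $t=0$ being precisely Theorem \ref{stanleyforcomplete} (the complete $p$-partite matroid), which already exhibits an explicit pure multicomplex $\G$ with $f(\G)=h^{\D_0(d,p,\aaa)}$. For the inductive step, suppose $t\ge 1$ and that the statement holds for $t-1$ in all dimensions. I would apply formula \eqref{H} repeatedly at the $a_1$ vertices of the cone class $A_1$. Since $A_1$ is a cone class, passing to the face deletion lowers $a_1$ by one without changing the dimension or the other data, while $\lk_{\D_t(d,p,\aaa)}v = \D_{t-1}(d-1,p-1,\aaa')$ has a pure $O$-sequence $h$-vector by the inductive hypothesis on $t$. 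After $a_1$ applications of \eqref{H}, exactly as in \eqref{inductcomplete}, one gets
\[
h^{\D_t(d,p,\aaa)}(j)=h^{\D_t(d,p,\aaa')'}(j-a_1)+\sum_{k=0}^{a_1-1}h^{\D_{t-1}(d-1,p-1,\aaa')}(j-k),
\]
where the first summand corresponds to the ``degenerate'' case in which the class $A_1$ has collapsed — which when $a_1$ drops all the way is again handled by induction, or more cleanly: iterating down one sees that $h^{\D_t(d,p,\aaa)}$ is built from copies of $h^{\D_{t-1}(d-1,p-1,\aaa')}$ shifted and stacked in a staircase, together with the top piece governed by $\D_0$. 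The combinatorial task is then to realize this recursion on the level of order ideals: given a pure order ideal $\G'$ in $\Bbbk[y_1,\ldots,y_{d-1}]$ with $f(\G')=h^{\D_{t-1}(d-1,p-1,\aaa')}$, introduce a new variable $y_d$ and take the order ideal whose facets are $\{y_d^{\ell}\cdot(\text{facet of }\G')\}$ suitably normalized so that all top-degree monomials have the same degree — this is the same multiplication-by-$y_d^k$ bijection used in the proof of Theorem \ref{stanleyforcomplete}. Checking that the resulting $f$-vector matches the right-hand side of the displayed recursion, and that purity is preserved, finishes the induction.

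I expect the main obstacle to be the bookkeeping at the ``boundary'' of the staircase — that is, making precise what the first summand $h^{\D_t(d,p,\aaa')'}(j-a_1)$ becomes after the class $A_1$ has been fully exhausted, and confirming that the pure order ideal one builds genuinely has constant top degree (purity) rather than merely the correct $f$-vector. Concretely, one must exhibit explicit generators analogous to those in Theorem \ref{stanleyforcomplete}: monomials in $y_1,\ldots,y_d$ of the shape $y_1^{a_1-1}\cdots y_t^{a_t-1}\, y_{t+1}^{(\sum_{i=l_t}^{l_{t+1}-1}a_i)-1}\cdots$, where the first $t$ exponents are forced by the cone classes and the remaining exponents range over the ``complete'' choices $t<l_t<\cdots<l_{d-1}\le p$. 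The verification that this multicomplex is pure and that its $f$-vector satisfies the same recursion as $h^{\D_t(d,p,\aaa)}$ is then a routine — if slightly tedious — induction of exactly the type already carried out for Theorem \ref{stanleyforcomplete}, so I would organize the write-up to quote that proof's bijection lemma verbatim and only spell out the new feature, namely the inert prefix $y_1^{a_1-1}\cdots y_t^{a_t-1}$ contributed by the cone classes.
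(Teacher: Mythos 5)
Your proposal follows essentially the same route as the paper: induction on $t$ with Theorem \ref{stanleyforcomplete} as the base case, peeling the cone class $A_1$ one vertex at a time via \eqref{H} so that each link is $\D_{t-1}(d-1,p-1,\aaa')$ with $\aaa'=(a_2,\ldots,a_p)$, and realizing the resulting recursion on order ideals by appending the inert factor $y_1^{a_1-1}$ to the maximal monomials of the order ideal obtained inductively. The one loose end you flag --- the fate of the deletion term once $A_1$ is exhausted --- resolves itself: because $A_1$ is a cone class, the subcomplex containing a single remaining vertex of $A_1$ is a cone over the link, so its $h$-vector already equals $h^{\D_{t-1}(d-1,p-1,\aaa')}$ and the recursion telescopes cleanly to $h^{\D_t(d,p,\aaa)}(k)=\sum_{j=0}^{a_1-1}h^{\D_{t-1}(d-1,p-1,\aaa')}(k-j)$; the extra first summand in your displayed formula should not appear (deleting all of $A_1$ would drop the dimension, so \eqref{H} is never applied at the last vertex).
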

\begin{proof}
The case $t=0$ has already been treated in Theorem \ref{stanleyforcomplete}. So, we will use induction on $t$, assuming that $t\geq 1$. Let us write $\D_t$ for $\D_t(d,p,\aaa)$. The restricted simplicial complex $\D_t'=(\D_t)_{2,3,\ldots ,p}$ is just $\D_{t-1}(d-1,p-1,\tilde{\aaa})$, where $\tilde{\aaa}=(a_2,\ldots ,a_p)$. Therefore, we know by induction that $h^{\D_t'}$ is a pure $O$-sequence.
Set $A_1=\{v_{1,1},\ldots ,v_{1,a_1}\}$ and $\D_t^i\subset \D_t$ the sub-complex induced by the vertices $A_2\cup \ldots \cup A_p\cup \{v_{1,1},\ldots ,v_{1,i}\}$  for all $i=1,\ldots ,a_1$. We have $h^{\D_t^1}=h^{\D_t'*a_{1,1}}=h^{\D_t'}$. Moreover, for all $i\geq 2$ and $k\in \ZZ$, we have
\[h^{\D_t^i}(k)=h^{\D_t^{i-1}}(k-1)+h^{\D_t'}(k).\]
Particularly, since $\D_t=\D_t^{a_1}$, we get
\begin{equation}\label{thmcones}
h^{\D_t}(k)=\sum_{j=0}^{a_1-1}h^{\D_t'}(k-j) \ \ \ \ \ \forall \ k\in \ZZ.
\end{equation}
We know that $h^{\D_t'}$ is a pure $O$-sequence, so let $\Gamma'$ be the order ideal such that $f_{\Gamma'}=h^{\D_t'}$. Let us suppose that the set of maximal degree monomials  of $\Gamma'$ is
\[\F_{\Gamma'}=\{u_1,\ldots ,u_s~:~u_i\in \Bbbk[y_2,\ldots ,y_{d}] \mbox{ and } \deg(u_i)=a_2+\ldots +a_p-d+1\}.\]
Let $\Gamma$ be the pure order ideal with the following set of maximal monomials:
\[\F(\Gamma)=\{u_1y_1^{a_1-1},\ldots ,u_sy_1^{a_1-1}\}.\]
One can easily see that
\[f_{\Gamma}(k)=\sum_{j=0}^{a_1-1}f_{\Gamma'}(k-j), \ \ \ \ \ \forall \ k \in \ZZ,\]
so \eqref{thmcones} yields the conclusion.
\end{proof}

Putting together Theorem \ref{stanleyforcomplete} and the proof of Theorem \ref{thmdeltat} we obtain an explicit construction for an order ideal with the $f$-vector we are looking for. Namely, we obtain the following corollary.
\begin{corollary} If we denote by $\Gamma_t(d,p,\aaa)$ the following order ideal:
\[ \langle y_1^{a_1-1}\cdots y_t^{a_t-1}y_{t+1}^{(\sum_{i=t+1}^{l_1-1}a_i) -1}\cdots y_d^{(\sum_{i=l_{d-t-1}}^{p}a_i) -1}~:~ \forall~t+1<l_1<l_2<\ldots<l_{d-t-1}\le p\rangle,\]
we have that 
\[h^{\D_t(d,p,\aaa)} = f(\Gamma_t(d,p,\aaa)).\]
In particular,
\begin{equation}\label{socleequation}
\displaystyle \type(S/J(\D_t(d,p,\aaa)))=\binom{p-t-1}{d-t-1}
\end{equation}
\end{corollary}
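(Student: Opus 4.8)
The plan is to prove $h^{\D_t(d,p,\aaa)}=f(\Gamma_t(d,p,\aaa))$ by induction on $t$, drawing the base case from Theorem~\ref{stanleyforcomplete} and the inductive step from the construction already carried out in the proof of Theorem~\ref{thmdeltat}, and then to read off the type formula from the levelness of $S/J(\D_t(d,p,\aaa))$.

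For $t=0$ there is essentially nothing new: $\D_0(d,p,\aaa)$ is exactly the complete $p$-partite matroid, so Theorem~\ref{stanleyforcomplete} gives $h^{\D_0(d,p,\aaa)}=f(\G)$, and choosing $l_0=1$ in the description of $\F(\G)$ one recognizes $\G$ as $\Gamma_0(d,p,\aaa)$. For $t\ge 1$ I would reuse two ingredients from the proof of Theorem~\ref{thmdeltat}: first, that $\D_t'=(\D_t(d,p,\aaa))_{2,\ldots,p}$ equals $\D_{t-1}(d-1,p-1,\tilde\aaa)$ with $\tilde\aaa=(a_2,\ldots,a_p)$; second, the recursion~\eqref{thmcones},
\[h^{\D_t(d,p,\aaa)}(k)=\sum_{j=0}^{a_1-1}h^{\D_t'}(k-j)\qquad\forall\,k\in\ZZ.\]
By the inductive hypothesis $h^{\D_t'}=f(\Gamma_{t-1}(d-1,p-1,\tilde\aaa))$; after relabelling the variables by $y_i\mapsto y_{i+1}$ so that this order ideal lives in $\Bbbk[y_2,\ldots,y_d]$ (as in the proof of Theorem~\ref{thmdeltat}), its top-degree monomials are exactly $y_2^{a_2-1}\cdots y_t^{a_t-1}u$, where $u$ runs over the monomials $y_{t+1}^{(\sum_{i=t+1}^{l_1-1}a_i)-1}\cdots y_d^{(\sum_{i=l_{d-t-1}}^{p}a_i)-1}$ with $t+1<l_1<\cdots<l_{d-t-1}\le p$. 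Multiplying each of them by $y_1^{a_1-1}$, which is precisely the construction of the order ideal $\Gamma$ in that proof, produces exactly the generating set of $\Gamma_t(d,p,\aaa)$, and the same elementary bijection of monomials shows $f_{\Gamma_t(d,p,\aaa)}(k)=\sum_{j=0}^{a_1-1}f_{\Gamma_{t-1}(d-1,p-1,\tilde\aaa)}(k-j)$. Comparing this with the displayed recursion for $h^{\D_t(d,p,\aaa)}$ completes the induction.

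The type formula is then immediate. Since $\D_t(d,p,\aaa)^c$ is a matroid by Theorem~\ref{duality}, the ring $S/J(\D_t(d,p,\aaa))=\Bbbk[\D_t(d,p,\aaa)^c]$ is level, hence its type equals the last nonzero entry of $h^{\D_t(d,p,\aaa)}$, i.e. the number of generators of $\Gamma_t(d,p,\aaa)$. These are indexed by the chains $t+1<l_1<\cdots<l_{d-t-1}\le p$, that is by the $(d-t-1)$-subsets of the $(p-t-1)$-element set $\{t+2,\ldots,p\}$, which gives $\binom{p-t-1}{d-t-1}$.

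I do not foresee any real difficulty; the argument is bookkeeping resting on results already proved. The one place that demands attention is checking that all the listed generators of $\Gamma_t(d,p,\aaa)$ share a common degree, namely $a_1+\cdots+a_p-d=n-d$ (in agreement with $n-d$ being the top degree of $h^{\D_t(d,p,\aaa)}$), and that the variable relabelling across the inductive step is consistent with the construction in the proof of Theorem~\ref{thmdeltat}; both are straightforward once written out.
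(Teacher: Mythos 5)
Your proposal is correct and follows essentially the same route as the paper, which derives this corollary precisely by combining Theorem \ref{stanleyforcomplete} (the $t=0$ case) with the explicit order-ideal construction in the proof of Theorem \ref{thmdeltat}, tracking the generators through the recursion \eqref{thmcones} and then reading the type off the top-degree monomials via levelness. The counting of chains $t+1<l_1<\cdots<l_{d-t-1}\le p$ giving $\binom{p-t-1}{d-t-1}$ matches the paper's conclusion.
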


A consequence of Theorem \ref{thmdeltat} is the following interesting fact:

\begin{corollary}\label{d+1parallel classes}
Let $d\geq 1$. For all $\aaa \in (\ZZ_+)^{d+1}$ and $\D\in \M(d,d+1,\aaa)$, $h^{\D}$ is a pure {\it O}-sequence.
\end{corollary}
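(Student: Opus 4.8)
The plan is to reduce Corollary \ref{d+1parallel classes} to Theorem \ref{thmdeltat} by showing that when $p = d+1$ there are essentially only $d-1$ matroids in $\M(d,d+1,\aaa)$, namely the $\D_t(d,d+1,\aaa)$ for $t = 0,\ldots,d-2$. First I would fix $\D\in\M(d,d+1,\aaa)$ and pass to its simplified matroid $\Ds$, which lives on exactly $d+1$ vertices $w_1,\ldots,w_{d+1}$ (one per parallel class). By Proposition \ref{betti} and, more to the point, by Lemma \ref{d=p}-style reasoning together with Equation \eqref{H}, the $h$-vector $h^\D$ is determined by $h^{\Ds}$ together with the cardinalities $\aaa$; indeed applying \eqref{H} repeatedly to peel off the extra vertices in each parallel class shows $h^\D(k) = \sum h^{\Ds}(k-j)$ over the appropriate box of shifts, exactly as in \eqref{thmcones} and \eqref{inductcomplete}. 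So it suffices to prove the statement for the simplified matroid, i.e. to understand all matroids on $d+1$ vertices with complete $(d+1)$-partite $1$-skeleton (the $1$-skeleton being $K_{d+1}$, so all singletons and pairs are faces).

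The key combinatorial step is to classify rank-$d$ matroids on $d+1$ elements all of whose $2$-element subsets are independent. Such a matroid $\Ds$ is a $(d-1)$-dimensional complex on $[d+1]$; its set of \emph{non-faces} among the $d$-subsets determines it. I would argue that the complement of $\Ds$ inside the $(d-1)$-skeleton is governed by the circuits: a circuit of a matroid on $d+1$ elements of rank $d$ has size between $2$ and $d+1$; since all pairs are independent there are no $2$-circuits, and since the only $(d+1)$-subset is $[d+1]$ itself, the circuits of size $\le d$ are precisely the minimal non-faces. A classical fact (or a short direct argument via the circuit exchange axiom on $[d+1]$) is that such a matroid is a \emph{nested} or \emph{Schubert} matroid, and concretely it must be of the form $\{w_1,\ldots,w_t\}\ast U_{d-t,d+1-t}$ for some $t$, i.e. the first $t$ vertices are cone points and the rest form a uniform matroid. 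This is exactly $\Ds_t(d,d+1,\aaa)$ as recorded in the remark preceding Theorem \ref{thmdeltat}. Hence $\D$ is isomorphic to some $\D_t(d,d+1,\aaa)$, and Theorem \ref{thmdeltat} applies.

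The main obstacle I anticipate is the classification step: proving rigorously that every rank-$d$ matroid on $d+1$ elements with no loops and no parallel pairs has the form $\{w_1,\ldots,w_t\}\ast U_{d-t,d+1-t}$. The clean way is to induct using the structural lemmas already in the paper. If $\Ds$ has a cone point $w$, then $\Ds = \{w\}\ast \lk_{\Ds}w$, and $\lk_{\Ds}w$ is a rank-$(d-1)$ matroid on $d$ elements with complete $d$-partite $1$-skeleton, so by induction it is $\{w_2,\ldots,w_t\}\ast U_{d-t,d+1-t}$ and we are done. If $\Ds$ has no cone point, I claim $\Ds = U_{d,d+1}$, i.e. every $d$-subset is a facet: suppose some $d$-subset $G$ is not a face, so its unique complementary vertex, say $w_{d+1}$, together with... actually the cleanest phrasing is via vertex covers — $J(\Ds)$ has minimal generators corresponding to basic covers, and a basic cover of a $(d-1)$-matroid on $d+1$ vertices has size $\le 2$; if $\{w_i\}$ is a cover then $w_i$ is a cone point, contradiction, so all basic covers have size $2$, forcing the missing $d$-faces to come in a very restricted pattern that, absent cone points, leaves only the uniform matroid. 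I would then spell this out carefully; it is elementary but needs a careful case check. Once the classification is in hand the rest is immediate from Theorem \ref{thmdeltat} and the reduction to $\Ds$.
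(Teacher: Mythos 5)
Your proposal follows essentially the same route as the paper: reduce to the simplified matroid $\Ds$, show that any matroid in $\M(d,d+1,\aaa)$ is isomorphic to some $\D_t(d,d+1,\aaa)$, and invoke Theorem \ref{thmdeltat}. The paper's classification step is slicker than your circuit/cover-based one: \emph{any} pure $(d-1)$-dimensional simplicial complex on $d+1$ vertices is automatically a matroid, and each of its facets is the complement of a single vertex, so if $m\ge 3$ vertices occur as such complements the complex is the join of the remaining $d+1-m$ (cone) vertices with $U_{m-1,m}$, i.e.\ $\D_{d+1-m}(d,d+1,\uno)$ — no circuit axioms or case analysis are needed, though your ``no cone point $\Rightarrow$ every vertex is missed by some facet $\Rightarrow$ uniform'' observation is an equally quick finish. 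One caveat: your side claim that $h^\D(k)=\sum_j h^{\Ds}(k-j)$ over a box of shifts is false in general — already for $d=2$ and $\aaa=(1,1,2)$ one has $h^\D=(1,2,2)$ while the box sum of $h^{\Ds}=(1,2)$ gives $(1,3,2)$; the recursion \eqref{H} introduces links, not just shifts of $\Ds$. Fortunately this formula is never used: the argument only needs the isomorphism type of $\D$ (which, by Lemma \ref{lemma1}, is determined by $\Ds$ together with the class sizes), after which Theorem \ref{thmdeltat} applies directly to $\D$ itself.
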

\begin{proof}
We want to show that $\D$ actually is $\D_t(d,p,\aaa)$ for some $t=0,\ldots ,d-2$, so that Theorem \ref{thmdeltat} would give the thesis. Passing to $\Ds$, a proof in the case $\aaa = \uno = (1,1,\ldots ,1)\in (\ZZ)_+^{d+1}$ is enough. Notice that any $(d-1)$-dimensional pure simplicial complex on the vertex set $\{1,\ldots ,d+1\}$ is a matroid. In order to have the complete graph on $d+1$ vertices as $1$-skeleton, $\Ds$ must have $m\geq 3$ facets. Moreover, if $\D$ is a $(d-1)$-simplicial complex on $d+1$ vertices with $m\geq 3$ facets, then it is easy to prove that $\D$ is isomorphic to the matroid $\D_{d-m+1}(d,d+1,\uno)$. 
\end{proof}

\section{Minimal and maximal $h$-vectors}\label{sec4}

Among the matroids described in \eqref{deltat},  two play a fundamental role:
\begin{eqnarray}\label{deltamaxmin}
\DM(d,p,\aaa)&=&\D_0(d,p,\aaa),\\
\Dm(d,p,\aaa)&=&\D_{d-2}(d,p,\aaa^{\sigma}),\nonumber
\end{eqnarray}
where $\sigma$ is a permutation of $p$ elements such that $a_{\sigma(1)}\leq \ldots \leq a_{\sigma(p)}$.
In this section we will see that, for any $\D\in \M(d,p,\aaa)$, we have
\[h^{\Dm(d,p,\aaa)}\leq h^{\D}\leq h^{\DM(d,p,\aaa)}\]
component-wise.

 Given a matroid $\D$ with parallel classes $A_1,\ldots ,A_p$,  we need to consider in the following lemma the matroid $\D_{r\leftrightarrow s}$, where the parallel classes $A_r$ and $A_s$ are switched. Let us give a more rigorous definition: The matroid  $\D_{r\leftrightarrow s}$ has as facets the subsets $F=\{v_{i_1},\ldots ,v_{i_d}\}$ of $[n]$ such that one of the following happens:
\begin{compactitem}
\item[(i)] $|F\cap (A_r\cup A_s)|\in\{0,2\}$ and $F\in\F(\D)$,
\item[(ii)] $v_{i_j}\in A_r$, $F\cap A_s =\emptyset$ and there exists $v\in A_s$ such that $(F\setminus\{v_{i_j}\})\cup \{v\}\in \F(\D)$,
\item[(iii)] $v_{i_k}\in A_s$, $F\cap A_r=\emptyset$ and there exists $u\in A_r$ such that $(F\setminus\{v_{i_k}\})\cup \{u\}\in \F(\D)$.
\end{compactitem}

\begin{lemma}\label{switch}
Let $p>d$ and $\aaa = (a_1,\ldots ,a_p)\in (\ZZ_+)^p$ be a vector such that $a_1\leq \ldots \leq a_p$. Let $\D\in \M(d,p,\aaa)$ be a matroid such that $A_p$ is a cone class for $\D$ (i.e. it corresponds to a cone point in $\Ds$). Pick $\ell\in\{1,\ldots ,p-1\}$ such that $A_\ell$ is not a cone class for $\D$ (it exists because $p>d$). Then
\[h^{\D_{\ell\leftrightarrow p}}\leq h^{\D}.\]
\end{lemma}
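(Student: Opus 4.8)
The strategy is to compare the $h$-vectors of $\D$ and $\D_{\ell\leftrightarrow p}$ by peeling off the cone class $A_p$ from $\D$ and the parallel class of the same size $a_p$ from $\D_{\ell\leftrightarrow p}$, and then iterating the recursion \eqref{H}. Concretely, since $A_p$ is a cone class for $\D$, every vertex $v\in A_p$ is a cone point of $\Ds$ but \emph{not} of $\D$ (because $a_p\ge 2$ would be the only interesting case; if $a_p=1$ then $A_p$ cone class means $v$ is literally a cone point of $\D$ and one reduces to $\lk_\D A_p$ directly, handled separately). Applying formula \eqref{H} $a_p$ times to strip the vertices of $A_p$ one at a time gives
\[
h^{\D}(k)=\sum_{j=0}^{a_p-1}h^{\lk_{\D}A_p}(k-j)+h^{\D_{1,\ldots,p-1}}(k-a_p),\quad\forall\,k\in\ZZ,
\]
and since $A_p$ is a cone class, $\D_{1,\ldots,p-1}$ is exactly $\lk_\D A_p$ (every facet of $\D$ meets $A_p$, so deleting $A_p$ collapses onto the link). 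Hence in fact $h^{\D}(k)=\sum_{j=0}^{a_p}h^{\lk_\D A_p}(k-j)$ — the cone-class case of the construction in the $d=2$ warm-up. A parallel computation must be carried out on $\D_{\ell\leftrightarrow p}$, where now the roles of $A_\ell$ and $A_p$ are exchanged: in $\D_{\ell\leftrightarrow p}$ the class $A_\ell$ (of size $a_\ell\le a_p$) plays the role of $A_p$, and one writes $h^{\D_{\ell\leftrightarrow p}}$ in terms of $h^{\lk_{\D_{\ell\leftrightarrow p}}A_\ell}$ and $h^{(\D_{\ell\leftrightarrow p})\setminus A_\ell}$.

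\textbf{Key steps.} First I would establish that $\lk_{\D}A_p = \lk_{\D_{\ell\leftrightarrow p}}A_\ell$, or at least that these two links have the same $h$-vector: this should follow from the definition of $\D_{r\leftrightarrow s}$ via cases (i)--(iii), since switching $A_r$ and $A_s$ is designed precisely so that, restricted away from those two classes, nothing changes, and the link of the ``big'' class in one equals the link of the ``small'' class in the other. Denote this common complex $\Lambda$ and its $h$-vector $h^{\Lambda}=(g_0,\dots,g_m)$. Second, I would identify $(\D_{\ell\leftrightarrow p})\setminus A_\ell$: it should be a matroid in $\M(d,p-1,\aaa')$ for a suitable $\aaa'$ obtained from $\aaa$ by removing $a_\ell$ — call it $\D'$ with $h$-vector $h^{\D'}$. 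Then by \eqref{H} applied $a_\ell$ times,
\[
h^{\D_{\ell\leftrightarrow p}}(k)=\sum_{j=0}^{a_\ell-1}g_{k-j}+h^{\D'}(k-a_\ell),\quad
h^{\D}(k)=\sum_{j=0}^{a_p}g_{k-j},
\]
(the second using $A_p$ cone class, so no surviving deletion term other than the link reappearing). Third, I would need a comparison $h^{\D'}\le$ (something) to close the induction — the natural move is induction on $p$, or on $\sum a_i$, reducing $\D_{\ell\leftrightarrow p}$ further. Actually the cleanest path: observe $h^{\D}(k)=\sum_{j=0}^{a_p}g_{k-j}$ while, writing $\D'$ itself has $A_p$ (the original size-$a_p$ class, now sitting inside $\D_{\ell\leftrightarrow p}$ after the switch) as a \emph{non-cone} class in general, one bounds $h^{\D'}(k-a_\ell)\le \sum_{j=a_\ell}^{a_p}g_{k-j}$ by a sub-induction, whence
\[
h^{\D_{\ell\leftrightarrow p}}(k)\le\sum_{j=0}^{a_\ell-1}g_{k-j}+\sum_{j=a_\ell}^{a_p}g_{k-j}=\sum_{j=0}^{a_p}g_{k-j}=h^{\D}(k).
\]
The point is that in $\D_{\ell\leftrightarrow p}$ the large class $A_p$ is no longer forced to be a cone class, so stripping it contributes \emph{fewer} shifted copies of the link than in $\D$, and the surviving deletion term is controlled because $h$-vectors of matroid quotients are non-negative (levelness gives this).

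\textbf{Main obstacle.} The delicate point is controlling the deletion term $h^{\D'}=h^{(\D_{\ell\leftrightarrow p})\setminus A_\ell}$ and showing it is dominated by the appropriate tail $\sum_{j=a_\ell}^{a_p}g_{k-j}$ of shifted link-$h$-vectors. This requires understanding that in $\D_{\ell\leftrightarrow p}$ the switched class of size $a_p$ is ``as far from being a cone class as $A_\ell$ was in $\D$'', and that the relevant link stabilizes to $\Lambda$ again. I expect one must set up the whole argument as a double induction (on $p-d$ and on the number of non-cone classes, or on $\sum a_i$), so that stripping $A_\ell$ from $\D_{\ell\leftrightarrow p}$ lands in a strictly smaller instance to which the inductive hypothesis — or Lemma~\ref{switch} itself in a smaller case — applies. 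A secondary technical nuisance is verifying that $\D_{r\leftrightarrow s}$ is genuinely a matroid in $\M(d,p,\aaa)$ and that its links and deletions behave as claimed; this is bookkeeping with the augmentation axiom and cases (i)--(iii), but it must be done carefully because the whole comparison rests on the identification $\lk_\D A_p\cong \lk_{\D_{\ell\leftrightarrow p}}A_\ell$.
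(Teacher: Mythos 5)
Your overall strategy --- peel the cone class $A_p$ off $\D$, peel the new cone class $A_\ell$ off $\D_{\ell\leftrightarrow p}$, and compare the resulting expansions of \eqref{H} --- is the right one, and you correctly observe that $A_\ell$ becomes a cone class after the switch. But the identity your comparison rests on, namely $\lk_\D A_p=\lk_{\D_{\ell\leftrightarrow p}}A_\ell$ (even just at the level of $h$-vectors), is false whenever $a_\ell<a_p$. The first complex is a $(d-2)$-dimensional matroid on $n-a_p$ vertices and still contains the parallel class $A_\ell$ of size $a_\ell$; the second is a $(d-2)$-dimensional matroid on $n-a_\ell$ vertices and contains $A_p$ of size $a_p$. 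Their $h$-vectors do not even have the same length ($n-a_p-d+2$ versus $n-a_\ell-d+2$). The switch identifies the \emph{simplifications} of the two links, not the links themselves. Consequently there is no single vector $(g_0,\ldots,g_m)$ playing both roles, the telescoping $\sum_{j=0}^{a_\ell-1}g_{k-j}+\sum_{j=a_\ell}^{a_p}g_{k-j}=\sum_{j=0}^{a_p}g_{k-j}$ has nothing to telescope, and the ``sub-induction'' bounding the deletion term by a tail of the $g$'s is unsupported. (Two smaller slips: a cone class of size $a_p$ contributes exactly $a_p$ shifted copies of its link, not $a_p+1$ --- the last remaining vertex of $A_p$ is a genuine cone point, so Lemma \ref{linkage} cannot be applied to it and no extra deletion summand survives; and since $A_\ell$ is a cone class of $\D_{\ell\leftrightarrow p}$, the term $h^{\D'}(k-a_\ell)$ in your expansion of $h^{\D_{\ell\leftrightarrow p}}$ should not appear either.)

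The repair, which is the route the paper takes, is to expand one level further. Set $L_p=\lk_\D A_p$ and $\overline{L}_\ell=\lk_{\D_{\ell\leftrightarrow p}}A_\ell$, and strip the \emph{other} switched class out of each link: the deletions $L_p\setminus A_\ell$ and $\overline{L}_\ell\setminus A_p$ are isomorphic (both live on $[n]\setminus(A_\ell\cup A_p)$, where the switch changes nothing), and so are the iterated links $\lk_{L_p}A_\ell$ and $\lk_{\overline{L}_\ell}A_p$. Writing $h'$ and $h''$ for these two common $h$-vectors and expanding each side twice via \eqref{H} gives
\[
h^\D(k)=\sum_{i=0}^{a_p-1}h'_{k-a_\ell-i}+\sum_{i=0}^{a_p-1}\sum_{j=1}^{a_\ell}h''_{k-a_\ell-i+j},\qquad
h^{\D_{\ell\leftrightarrow p}}(k)=\sum_{i=0}^{a_\ell-1}h'_{k-a_p-i}+\sum_{i=0}^{a_\ell-1}\sum_{j=1}^{a_p}h''_{k-a_p-i+j}.
\]
The two double sums are literally equal after the reindexing $(i,j)\mapsto(a_p-j,\,a_\ell-i)$, and the two single sums compare because $a_\ell\le a_p$ makes the index range $\{k-a_p-a_\ell+1,\ldots,k-a_p\}$ a subset of $\{k-a_p-a_\ell+1,\ldots,k-a_\ell\}$ and $h'\ge 0$. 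This is where the hypothesis $a_\ell\le a_p$ actually enters; in your version it only entered through the unproven claim about $h^{\D'}$.
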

\begin{proof}
Set $L_p=\lk_{\D}A_p$ and $\overline{L}_\ell=\lk_{\D_{\ell\leftrightarrow p}}A_{\ell}$. Furthermore, let $L_p'=L_p\setminus A_\ell$  and $\overline{L'}_\ell=\overline{L}_{\ell} \setminus A_p$. Notice that $L_p'\cong \overline{L'}_\ell$ and  that $T=\lk_{L_p}A_\ell \cong \lk_{\overline{L'}_\ell}A_p=U$. For all $k\in\ZZ$, we have
\begin{equation}\label{eqswitch1}
h^\D(k) = \sum_{i=0}^{a_p-1}h^{L_p'}({k-a_\ell-i})+\sum_{i=0}^{a_p-1}\sum_{j=1}^{a_\ell}h^{T}({k-a_{\ell}-i+j})
\end{equation}
and 
\begin{equation}\label{eqswitch2}
h^{\D_{\ell\leftrightarrow p}}(k) = \sum_{i=0}^{a_\ell-1}h^{\overline{L'}_\ell}({k-a_p-i})+\sum_{i=0}^{a_\ell-1}\sum_{j=1}^{a_p}h^{U}(_{k-a_p-i+j}).
\end{equation}
From the above discussion we  have $h^{L_p'}(r)=h^{\overline{L'}_\ell}(r)=:h_r'$ and $h^T(r)=h^U(r)=:h_r''$ for all $r\in \ZZ$. Let us set 
\[M_1=\sum_{i=0}^{a_p-1}h_{k-a_\ell-i}', \ \ \ M_2=\sum_{i=0}^{a_p-1}\sum_{j=1}^{a_\ell}h_{k-a_{\ell}-i+j}''\] 
and
\[N_1=\sum_{i=0}^{a_\ell-1}h_{k-a_p-i}', \ \ \ N_2=\sum_{i=0}^{a_\ell-1}\sum_{j=1}^{a_p}h_{k-a_p-i+j}''.\]
Because $a_\ell\leq a_p$, obviously $N_1\leq M_1$. Moreover we claim that $N_2=M_2$. To see this, it is enough to notice that 
\[h_{k-a_p -i+j}''=h_{k-a_\ell-(a_p-j)+(a_\ell-i)}''.\]
So, we get that \eqref{eqswitch2} is less than or equal to \eqref{eqswitch1}.
\end{proof}

We need one more technical lemma.

\begin{lemma}\label{lemmamin}
Let $A_1,\ldots ,A_p$ and $B_1,\ldots ,B_q$ be partitions of $\{1,\ldots ,n\}$ of cardinality $|A_i|=a_i$ and $|B_j|=b_j$, where $p\geq d$ and $q\geq d$, such that
\begin{enumerate}
\item[{\em (i)}] $a_1\leq \ldots \leq a_p$,
\item[{\em (ii)}] $b_1\leq \ldots \leq b_q$,
\item[{\em (iii)}] $B_j=\bigcup_{k=1}^{r_j}A_{i_{j,k}}$,
\item[{\em (iv)}] $\bigcup_{i=1}^dA_i\subset \bigcup_{i=1}^dB_i$.
\end{enumerate}
Set $\tilde{\aaa}=(a_1,a_2,\ldots ,a_{d-1},a_d+\ldots +a_p)$ and $\bbb=(b_1,\ldots ,b_q)$. If $\Gamma$ is the only $(d-1)$-dimensional matroid in $\M(d,d,\tilde{\aaa})$, then
\[h^{\Gamma}\leq h^{\D} \ \ \ \forall \ \D\in \M(d,q,\bbb).\]
\end{lemma}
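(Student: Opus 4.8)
The plan is to reduce the general inequality to a sequence of moves between matroids in classes with fewer parallel classes, and then to a single base inequality that compares a complete intersection with an arbitrary complete-intersection-dominated $h$-vector. First I would set up the comparison: by Proposition \ref{betti} we may replace $\D \in \M(d,q,\bbb)$ by its simplification, and by the hypothesis (iii) that each $B_j$ is a union of certain $A_i$'s we can view any $\D \in \M(d,q,\bbb)$ as built from the finer partition $A_1,\dots,A_p$ by \emph{amalgamating} parallel classes. Since the $h$-vector only depends on the cardinalities of the parallel classes of the simplification together with the combinatorial type of $\Ds$ (Proposition \ref{betti} and the $\ZZ^p$-grading remark), the real content is: among all $(d-1)$-dimensional matroids whose $1$-skeleton is $q$-partite with part sizes $\bbb$, the one with the \emph{smallest} $h$-vector is obtained by collapsing all but $d-1$ parallel classes into a single big class, i.e.\ $\Gamma = \Dm$-type with partition $\tilde\aaa$. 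Condition (iv) $\bigcup_{i=1}^d A_i \subset \bigcup_{i=1}^d B_i$ is what guarantees the collapsed class is "as large as possible'' relative to $\D$.

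The key tool is Lemma \ref{switch}, applied repeatedly. The idea is: given $\D \in \M(d,q,\bbb)$, a facet of $\Ds$ is a $d$-subset of its $q$ vertices. I would argue by induction that one can perform a sequence of \emph{switches} $\D \rightsquigarrow \D_{\ell \leftrightarrow q}$ — each time moving a non-cone class past a cone class — to drive all the "bulk'' of the partition into a single large class while only \emph{decreasing} the $h$-vector (Lemma \ref{switch} gives $h^{\D_{\ell\leftrightarrow q}} \le h^{\D}$ at each step). More precisely: using the exchange property one shows that in any matroid in $\M(d,q,\bbb)$ with $q>d$ there is at least one non-cone class; applying \eqref{H} along the vertices of a cone class versus a non-cone class is exactly the computation inside the proof of Lemma \ref{switch}, so repeated application, together with the ordering $b_1 \le \dots \le b_q$, lets us push toward the configuration where $d-1$ classes are singletons-or-small and the rest is one merged class of size $b_d + \dots + b_q = a_d + \dots + a_p$ (using (iii), (iv)). That merged matroid is $\D_{d-2}(d,d,\tilde\aaa)$, which by Lemma \ref{d=p} (or Theorem \ref{stanleyforcomplete} with $p=d$) is the unique matroid $\Gamma$ in $\M(d,d,\tilde\aaa)$, a complete intersection of degrees $a_1,\dots,a_{d-1},a_d+\dots+a_p$. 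Chaining the inequalities yields $h^{\Gamma} \le h^{\D}$.

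An alternative, perhaps cleaner, route avoids switches entirely and argues directly by induction on $d$ and on $\sum a_i$ using formula \eqref{H}: pick a vertex $v$ in the largest class of $\D$; then $h^{\D}(k) = h^{\D\setminus v}(k-1) + h^{\lk_\D v}(k)$, and similarly for $\Gamma$ (whose largest class is $A_d \cup \dots \cup A_p$, which is non-trivial since $q \ge d$ forces $\Gamma$'s big class to have $\ge p-d+1 \ge 1$ original classes, hence $\ge 1$ vertex — and in the relevant range it has a non-cone vertex). One checks that $\D \setminus v$ and $\lk_\D v$ again fit the hypotheses of the lemma with smaller data (the link drops $d$ by one and all the containments (i)–(iv) are inherited), so induction gives $h^{\Gamma \setminus w} \le h^{\D\setminus v}$ and $h^{\lk_\Gamma w} \le h^{\lk_\D v}$, and \eqref{H} adds these up. The base cases $d=1$ (trivial) and $q=d$ (then $\D = \Gamma$ after merging, or the complete-intersection comparison of Lemma \ref{d=p}) close the induction.

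The main obstacle, in either approach, is verifying that the inductive data is genuinely preserved — in particular that after deleting a vertex or performing a switch, the chosen class in $\Gamma$ still has a non-cone vertex so that \eqref{H} is applicable, and that condition (iv), which is the subtle hypothesis, survives. I expect the bookkeeping around (iii) and (iv) (keeping track of which original $A_i$'s land in which $B_j$ and ensuring the "first $d$'' classes on both sides stay aligned after an operation) to be where the real care is needed; the Hilbert-function arithmetic via \eqref{H} is then routine, exactly paralleling the computation already carried out in the proof of Lemma \ref{switch}.
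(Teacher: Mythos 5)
Your second route is the one the paper actually follows (induction on the number of parallel classes, peeling off the largest class $B_q$ of $\D$ via \eqref{H}, with Lemma \ref{switch} invoked once to guarantee $B_q$ is not a cone class so that the deletion keeps dimension $d-1$), but both of your routes have a genuine gap at the same place. In the first route you ask Lemma \ref{switch} to ``drive all the bulk of the partition into a single large class'': the switch operation $\D\rightsquigarrow\D_{\ell\leftrightarrow p}$ only exchanges the roles (and hence the cardinalities) of two parallel classes; it never changes $q$ and never merges classes, so no sequence of switches can turn a $q$-partite matroid into the $d$-partite matroid $\Gamma$. The merging step is precisely the content of the lemma, and Lemma \ref{switch} does not supply it.

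In the second route the unproved step is the link comparison $h^{\lk_\Gamma w}\leq h^{\lk_\D v}$. You assert that hypotheses (i)--(iv) are ``inherited'' by the links so that induction on $d$ applies, but this fails: $\lk_\Gamma B_q$ lives on the vertex set $A_1\cup\ldots\cup A_{d-1}$, while $\lk_\D B_q$ is a $(d-2)$-dimensional matroid on a partition $C_1,\ldots,C_r$ of a generally \emph{larger} vertex set, so condition (iii) (each part a union of $A_i$'s of a common ground set) and the alignment in (iv) cannot even be formulated for the pair of links, and the two sides do not have matching total vertex numbers. The paper closes this by a different argument: it shows that $\lk_\Gamma B_q$, which is the unique $(d-1)$-partite $(d-2)$-dimensional (complete intersection) matroid on $(a_1,\ldots,a_{d-1})$, is isomorphic to an \emph{induced subcomplex} of $\lk_\D B_q$ of the same dimension (choosing $a_k$ vertices inside suitably chosen classes $C_{i_k}$ with $a_k\leq |C_{i_k}|$), and then uses that restriction decreases the $h$-vector. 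Without this (or some substitute), the inductive step does not close; likewise the base case $q=d$ needs the explicit complete-intersection comparison (a balanced degree sequence dominates a less balanced one of the same total), which you gesture at via Lemma \ref{d=p} but do not verify.
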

\begin{proof}
If $q=d$, then the assertion can be deduced by inspection on the $h$-vectors of $\D$ and $\Gamma$, described in Theorem \ref{stanleyforcomplete}. In fact, using this theorem, one can show a more general statement: Let ${\bm \alpha}=(\alpha_1,\ldots ,\alpha_d)\in (\ZZ_+)^d$ and ${\bm \beta}=(\beta_1,\ldots ,\beta_d)\in (\ZZ_+)^d$ be vectors such that $\alpha_1\leq \ldots \leq \alpha_d$, $\beta_1\leq \ldots \leq \beta_d$, $\sum_{i=1}^d\alpha_i=\sum_{i=1}^d\beta_i$ and $\alpha_i\leq \beta_i$ for all $i=1,\ldots ,d-1$. Then, the $h$-vector of the only matroid in $\M(d,d,{\bm \alpha})$ is less than or equal to the $h$-vector of the only matroid in $\M(d,d,{\bm \beta})$. We leave the easy proof of this fact to the reader.

We will use  induction on $p$.  Notice that, as we always have $d\leq q\leq p$, 
the case $p=d$, implies $q=d$, so we are done by the above discussion.

If $p>d$ and $q>d$, then $i_{q,k}>d$ for all $k=1,\ldots ,r_q$. Consider the sub-complex $\Gamma'\subset \Gamma$ induced by the vertices not in $B_q$ and set $L=\lk_\Gamma B_q$.  As $B_q$ is a subset of a parallel class in $\Gamma$, L is well defined and for all $k\in \ZZ$ we have
\[h^{\Gamma}(k)=h^{\Gamma'}({k-b_q})+\sum_{i=1}^{b_q}h^{L}({k-b_q+i}).\]

In the same vein, we can consider the sub-complex $\Dp\subset \D$ induced by all the vertices of $\D$ not in $B_q$ and we set $K=\lk_{\D}B_q$. Once again we have, for all $k\in \ZZ$, 
\[h^{\D}(k)=h^{\D'}({k-b_q})+\sum_{i=1}^{b_q}h^{K}({k-b_q+i}).\]
By Lemma \ref{switch} we can assume that $B_q$ is not a cone class of  $\D$, so that $\Dp$  has dimension $d-1$. Therefore by the induction on $p$ we immediately get $h^{\Gamma'}\leq h^{\D'}$. 

On the other hand, $L$ is the unique $(d-1)$-partite $(d-2)$-dimensional matroid on the partition $(a_1,\ldots ,a_{d-1})$, whereas $K$ is a $(d-2)$-dimensional matroid on a certain partition $C_1,\ldots ,C_r$. For sure $r\geq d-1$ and, provided that $|C_1|\leq \ldots \leq |C_r|$, we get also that $a_i\leq b_i\leq |C_i|$ for all $i=1,\ldots d-1$. Take a facet $\{v_{i_1},\ldots ,v_{i_{d-1}}\}$ of $K$ and suppose that each $v_{i_k}\in C_{i_k}$. Then the sub-complex $K'\subset K$ induced by the vertices of $C_{i_1}\cup \ldots \cup C_{i_{d-1}}$ is a complete $(d-1)$-partite $(d-2)$-dimensional matroid. We can assume $ i_1<\ldots <i_d$, so that $a_k\leq |C_{i_k}|$ for all $k=1,\ldots ,d-1$. So we can choose $a_k$ vertices in each one of the $C_{i_k}$s. It turns out that $L$ is isomorphic to the sub-complex of $K'$ induced by these vertices. Therefore $L$ is isomorphic to an induced sub-complex of $K$, which implies $h^{L}\leq h^{K}$. So we can conclude.
\end{proof}

\begin{theorem}\label{minimality}
If $d\geq 1$ and $\aaa=(a_1,\ldots ,a_p)\in (\ZZ_+)^p$ with $p\geq d$, then
\[h^{\Dm(d,p,\aaa)}\leq h^{\D} \ \ \ \forall \ \D\in \M(d,p,\aaa).\]
\end{theorem}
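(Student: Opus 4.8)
The plan is to induct on $p$, with $d$ held fixed. Since $\M(d,p,\aaa)$, and with it $\Dm(d,p,\aaa)$, depends only on the multiset $\{a_1,\dots,a_p\}$, I may relabel the parallel classes of every matroid in sight so that $a_1\le\cdots\le a_p$; with this convention $\Dm(d,p,\aaa)=\D_{d-2}(d,p,\aaa)$. The case $d=1$ and the base case $p=d$ are immediate, since then $\M(d,p,\aaa)$ consists of a single matroid. For the inductive step let $p>d$ and $\D\in\M(d,p,\aaa)$. If $A_p$ is a cone class of $\D$, Lemma~\ref{switch} produces a non-cone class $A_\ell$ with $\D_{\ell\leftrightarrow p}\in\M(d,p,\aaa)$, with $A_p$ not a cone class of $\D_{\ell\leftrightarrow p}$, and with $h^{\D_{\ell\leftrightarrow p}}\le h^\D$; hence it is enough to treat matroids $\D$ in which $A_p$ is not a cone class.

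Assume then that $A_p$ is not a cone class. Then $\D\setminus A_p=\D_{1,\dots,p-1}$ is still $(d-1)$-dimensional and lies in $\M(d,p-1,\aaa')$ with $\aaa'=(a_1,\dots,a_{p-1})$, while $\lk_\D A_p$ is a $(d-2)$-dimensional matroid whose vertex set is all of $A_1\cup\cdots\cup A_{p-1}$ (here one uses that the $1$-skeleton of $\D$ is complete $p$-partite) and whose parallel classes coarsen $\{A_1,\dots,A_{p-1}\}$. Stripping the $a_p$ vertices of $A_p$ one at a time and applying~\eqref{H} (the link of each of them is $\lk_\D A_p$) gives
\[
h^\D(k)=h^{\D_{1,\dots,p-1}}(k-a_p)+\sum_{i=0}^{a_p-1}h^{\lk_\D A_p}(k-i),\qquad\forall\,k\in\ZZ .
\]
Running the same computation on $\Dm(d,p,\aaa)=\D_{d-2}(d,p,\aaa)$, one checks that $(\Dm(d,p,\aaa))\setminus A_p=\D_{d-2}(d,p-1,\aaa')=\Dm(d,p-1,\aaa')$ and that $\Lambda:=\lk_{\Dm(d,p,\aaa)}A_p$ is the unique matroid of $\M(d-1,d-1,\tilde\aaa')$ with $\tilde\aaa'=(a_1,\dots,a_{d-2},a_{d-1}+\cdots+a_{p-1})$, so that
\[
h^{\Dm(d,p,\aaa)}(k)=h^{\Dm(d,p-1,\aaa')}(k-a_p)+\sum_{i=0}^{a_p-1}h^{\Lambda}(k-i),\qquad\forall\,k\in\ZZ .
\]
Both right-hand sides are the image of their pair of ingredients under one and the same operation, which is monotone in each argument; so it suffices to prove the two termwise inequalities $h^{\Dm(d,p-1,\aaa')}\le h^{\D_{1,\dots,p-1}}$ and $h^{\Lambda}\le h^{\lk_\D A_p}$. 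The first is exactly the inductive hypothesis applied to $\D_{1,\dots,p-1}\in\M(d,p-1,\aaa')$.

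The second inequality $h^{\Lambda}\le h^{\lk_\D A_p}$ is where the real work lies, and it is the step I expect to be the main obstacle. Let $C_1,\dots,C_q$ ($q\ge d-1$) be the parallel classes of $\lk_\D A_p$, sorted by $|C_1|\le\cdots\le|C_q|$. Applying Lemma~\ref{lemmamin} with the partition $\{C_1,\dots,C_q\}$ in \emph{both} roles (so that its hypotheses (iii) and (iv) are automatic) yields $h^{\Lambda''}\le h^{\lk_\D A_p}$, where $\Lambda''$ is the unique matroid of $\M(d-1,d-1,(|C_1|,\dots,|C_{d-2}|,|C_{d-1}|+\cdots+|C_q|))$. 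Since each $C_j$ is a union of $A_i$'s, a pigeonhole argument (the one used in the proof of Lemma~\ref{lemmamin}) gives $a_i\le|C_i|$ for $i=1,\dots,d-2$; then the ``more general statement'' recorded inside the proof of Lemma~\ref{lemmamin} --- monotonicity of complete-intersection $h$-vectors under rebalancing of the generator degrees --- gives $h^{\Lambda}\le h^{\Lambda''}$. Chaining $h^{\Lambda}\le h^{\Lambda''}\le h^{\lk_\D A_p}$ closes the induction.

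The delicate point, and the reason Lemma~\ref{lemmamin} cannot simply be quoted once, is that its hypothesis (iv) fails if one tries to bound $h^{\lk_\D A_p}$ directly by the sharp value $h^{\Lambda}$: a small parallel class of $\D$ may be absorbed into a large parallel class of $\lk_\D A_p$, so one is forced to detour through the intermediate matroid $\Lambda''$ and then restore sharpness via the rebalancing monotonicity. Secondary things to verify are that $\D_{\ell\leftrightarrow p}$ is genuinely a matroid lying in $\M(d,p,\aaa)$ with $A_p$ non-cone, that $\D_{1,\dots,p-1}$ keeps dimension $d-1$ (which holds precisely because $A_p$ is not a cone class), and the two identifications $(\Dm(d,p,\aaa))\setminus A_p=\Dm(d,p-1,\aaa')$ and $\lk_{\Dm(d,p,\aaa)}A_p=\Lambda$ used in the second displayed formula.
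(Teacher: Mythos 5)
Your proposal is correct and follows the paper's own proof almost step for step: the same induction on $p$ after sorting $a_1\le\cdots\le a_p$, the same use of Lemma~\ref{switch} to reduce to the case where $A_p$ is not a cone class, the same $a_p$-fold application of \eqref{H} to both $\D$ and $\Dm(d,p,\aaa)$, the inductive hypothesis for the deletions, and Lemma~\ref{lemmamin} for the links. The one place where you genuinely depart from the paper is the comparison $h^{\Lambda}\le h^{\lk_\D A_p}$. The paper applies Lemma~\ref{lemmamin} in a single shot, with $(A_1,\dots,A_{p-1})$ as the fine partition and the parallel classes of $\lk_\D A_p$ as the coarse one, and simply asserts that the hypotheses are met; you instead apply the lemma with the link's own partition in both roles (making (iii) and (iv) vacuous) and then pass from $\Lambda''$ down to $\Lambda$ via the rebalancing monotonicity of complete-intersection $h$-vectors recorded inside the proof of Lemma~\ref{lemmamin}. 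Your caution here is well placed: hypothesis (iv) of Lemma~\ref{lemmamin} can indeed fail in the paper's direct application --- for instance, for the rank-$3$ matroid of five points in the plane with only $v_1,v_4,v_5$ collinear, thickened with class sizes $(1,2,3,4,10)$, the smallest class $A_1$ is absorbed into the largest class of $\lk_\D A_5$, so $A_1\cup A_2\not\subset B_1\cup B_2$. Your detour through $\Lambda''$ (together with the pigeonhole bound $a_i\le|C_i|$, which only needs (iii)) delivers the same inequality without that hypothesis, so your write-up is not merely a restatement but a small repair of the argument. All the secondary verifications you list (that $\D_{\ell\leftrightarrow p}$ stays in $\M(d,p,\aaa)$ with $A_p$ non-cone, that the deletion keeps dimension $d-1$, and the two identifications for $\Dm$) check out.
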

\begin{proof}
Since neither the matroid $\Dm(d,p,\aaa)$ nor the set $\M(d,p,\aaa)$ depend on the order of $a_1,\ldots ,a_p$, we are allowed to assume that $a_1\leq \ldots \leq a_p$.
We  induct on $p$. If $p=d$, then the theorem is trivial, since $\M(d,d,\aaa)$ consists of only one matroid, namely $\Dm(d,d,\aaa)$. If $p>d$, let us set $\Dm(d,p,\aaa)'\subset \Dm(d,p,\aaa)$ and $\D'\subset \D$ the sub-complexes induced by the vertices of $A_1\cup \ldots \cup A_{p-1}$. Furthermore set $\aaa'=(a_1,\ldots ,a_{p-1})$. We have that $\Dm(d,p,\aaa)'=\Dm(d,p-1,\aaa')$. Exploiting Lemma \ref{switch}, we can assume that $\dim \D' = d-1$, so that $\D'\in \M(d,p-1,\aaa')$. So, by induction, we get 
\[h^{\Dm(d,p,\aaa)'}\leq h^{\D'}.\]
Now set $L=\lk_{\Dm(d,p,\aaa)}A_p$ and $K=\lk_{\D}A_p$. It turns out that $L$ is the unique $(d-1)$-partite $(d-2)$-dimensional matroid on the partition $(a_1,\ldots ,a_{d-2},a_{d-1}+\ldots +a_{p-1})$. Instead $K$ will be a $(d-2)$-dimensional matroid on a certain partition $(b_1,\ldots ,b_q)$. Such partitions satisfy the hypotheses of Lemma \ref{lemmamin}, so we get 
\[h^L\leq h^K.\]
This yields the conclusion, since for all $k\in \ZZ$
\begin{eqnarray*}
h^{\Dm(d,p,\aaa)}(k)&=&h^{\Dm(d,p,\aaa)'}({k-a_p})+\sum_{i=1}^{a_p}h^L({k-a_p+i})\ \ \ \textrm{and} \\
h^{\D}(k)&=&h^{\D'}({k-a_p})+\sum_{i=1}^{a_p}h^K({k-a_p+i}).
\end{eqnarray*}
\end{proof}

\begin{remark}\label{Gor=ci}
By Theorem \ref{minimality} and \eqref{socleequation} one has that, for all $\D\in \M(d,p,\aaa)$,
\begin{equation}\label{socleinequality}
\type(S/J(\D))\geq p-d+1.
\end{equation}
This implies that, for any matroid $\D$, $S/I_{\D}$ is Gorenstein if and only if $I_{\D}$ is a complete intersection if and only if $p=d$. So we recover \cite[Theorem 4.4.10]{Sto}.
\end{remark}

Equation \eqref{socleinequality} allows us to prove the following:

\begin{theorem}
If $\D$ is a matroid on $\{1,\ldots ,n\}$ such that $\type(S/I_{\D})\leq 3$, then $h(\D)=h(\Bbbk[\D])$ is a pure {\it O}-sequence. Equivalently, if the $h$-vector of a matroid has the shape $(1,h_1,\ldots ,h_s)$ with $h_s\leq 3$, then it is a pure $O$-sequence.
\end{theorem}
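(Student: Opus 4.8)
The plan is to reduce the statement to a claim about cover ideals via matroid duality, and then to split into cases according to the number of parallel classes.

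\textbf{Reduction.} Since $\Bbbk[\D]=S/I_{\D}=S/J(\D^{c})$ we have $h(\Bbbk[\D])=h^{\D^{c}}$, and by Theorem~\ref{duality} the complex $\D^{c}$ is again a matroid. As $S/I_{\D}$ is Cohen--Macaulay, the last Betti number of $S/J(\D^{c})=S/I_{\D}$ equals $\type(S/I_{\D})$, so $\type(\D^{c})\le 3$. Hence it suffices to prove: \emph{if $\Gamma$ is a matroid with $\type(\Gamma)\le 3$, then $h^{\Gamma}$ is a pure $O$-sequence}. Write $\Gamma\in\M(d,p,\aaa)$. By the inequality \eqref{socleinequality} of Remark~\ref{Gor=ci}, $p-d+1\le\type(\Gamma)\le 3$, so $p\in\{d,d+1,d+2\}$. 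If $p=d$, then $\M(d,d,\aaa)=\{\D_{0}(d,d,\aaa)\}$ and the conclusion is Theorem~\ref{thmdeltat} (or Lemma~\ref{d=p}); if $p=d+1$, it is Corollary~\ref{d+1parallel classes}. So the only case left is $p=d+2$, where necessarily $\type(\Gamma)=3$.

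\textbf{The case $p=d+2$.} The plan is to show $\Gamma\cong\D_{d-2}(d,d+2,\bbb)$ for some reordering $\bbb$ of $\aaa$, after which Theorem~\ref{thmdeltat} applies. Let $\Lambda$ be the simplified matroid of $\Gamma$: it is simple, of rank $d$, on $d+2$ vertices, and since a cone point $v$ of a matroid satisfies $J(\cdot)=(x_{v})+J(\cdot\setminus v)$, deleting coloops does not change $h^{\Lambda}$; together with Proposition~\ref{betti} this gives $\type(\Lambda)=\type(\Gamma)=3$. Write $\Lambda=\{v_{1},\dots,v_{k}\}\ast M$ with $M$ coloop-free, simple, of rank $r:=d-k$ on $r+2$ vertices; then $\type(M)=\type(\Lambda)=3$. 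The key claim is that $r=2$. Granting it, $M=U_{2,4}$ (the unique coloop-free simple rank-$2$ matroid on four vertices), hence $\Lambda=\{v_{1},\dots,v_{d-2}\}\ast U_{2,4}$; so $\Gamma$ has exactly $d-2$ cone classes while its restriction to the remaining four parallel classes is a $1$-dimensional matroid whose simplification is $U_{2,4}$, i.e. a complete $4$-partite matroid. Listing the cardinalities of the cone classes first then identifies $\Gamma$ with $\D_{d-2}(d,d+2,\bbb)$, and Theorem~\ref{thmdeltat} finishes the case.

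\textbf{Proof of the claim $r=2$ (the main obstacle).} Suppose $r\ge 3$. For every vertex $w$ of $M$, formula \eqref{H} gives $3=\type(M)=h^{M}(2)=h^{M\setminus w}(1)+h^{\lk_{M}w}(2)$. Now $M\setminus w\in\M(r,r+1,\uno)$, so by \eqref{socleinequality} and levelness $h^{M\setminus w}(1)=\type(M\setminus w)\ge 2$; and $\lk_{M}w$ has rank $r-1$ on $r+1$ vertices, say with $q\ge r-1$ parallel classes, so $h^{\lk_{M}w}(2)=\type(\lk_{M}w)\ge q-r+2\ge 1$. Since the two terms sum to $3$, we must have $q=r-1$, i.e. $\lk_{M}w$ is a partition matroid --- whose circuits are precisely the $2$-subsets of its parallel classes --- \emph{for every} $w$. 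A closure argument then rules out $3$-circuits of $M$: if $\{a,b,c\}$ were one, choose $w\notin\mathrm{cl}(\{a,b,c\})$ (possible since $r\ge 3$); then $\{a,b,c\}$ is dependent in the partition matroid $\lk_{M}w$, so a pair from it, say $\{a,b\}$, is a circuit there, forcing $\{w,a,b\}$ to be a circuit of $M$ and hence $w\in\mathrm{cl}(\{a,b\})=\mathrm{cl}(\{a,b,c\})$, a contradiction. Applying the same argument to a circuit of minimal size $\le r$ shows $M$ has no circuit of size $\le r$ at all; as $M$ is simple of rank $r$, every circuit has size $r+1$, so every $r$-subset is a basis and $M=U_{r,r+2}$. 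But then $\type(M)=\binom{r+1}{2}\ge 6$ by \eqref{socleequation}, contradicting $\type(M)=3$. Since $r\le 1$ is impossible for a coloop-free simple matroid with at least two elements, $r=2$. Everything outside this claim is routine manipulation of \eqref{H}, the socle inequality \eqref{socleinequality}, and Theorem~\ref{thmdeltat}; the genuinely delicate point is the combinatorial fact that, for $r\ge 3$, no coloop-free simple rank-$r$ matroid on $r+2$ vertices has Cohen--Macaulay type $3$, established by the closure/circuit argument above.
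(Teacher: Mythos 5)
Your proof is correct, and while the overall skeleton (dualize, apply the socle inequality \eqref{socleinequality} to reduce to $p\in\{d,d+1,d+2\}$, dispose of $p=d$ and $p=d+1$ via Lemma \ref{d=p} and Corollary \ref{d+1parallel classes}, then identify the remaining matroids with $\D_{d-2}(d,d+2,\aaa)$ and invoke Theorem \ref{thmdeltat}) coincides with the paper's, the key step in the case $p=d+2$ is handled by a genuinely different argument. The paper dualizes a second time: $(\Ds)^c$ is a rank-$2$ matroid, hence a complete multipartite graph $G$ with $h_G=(1,v-2,e-v+1)$, and $e=v+2$ forces $G=K_4$, whence $\Ds=\D_{d-2}(d,d+2,\uno)$. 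You instead stay on the primal side, strip the coloops from $\Ds$ to get a coloop-free simple rank-$r$ matroid $M$ on $r+2$ elements with $\type(M)=3$, and show $r=2$ by a circuit/closure argument: the decomposition $3=h^{M\setminus w}(1)+h^{\lk_M w}(2)$ together with \eqref{socleinequality} forces every $\lk_M w$ to be a partition matroid, which (for $r\ge 3$) excludes all circuits of size $\le r$ and pins $M$ down as $U_{r,r+2}$, of type $\binom{r+1}{2}\ge 6$. Your route is longer, but it buys something concrete: the paper's assertion that $K_4$ is the only complete multipartite graph with $e=v+2$ is, taken literally, false ($K_{2,4}$ and $K_{1,1,3}$ also satisfy it); those graphs are only excluded because their duals are not simple, a point the paper leaves implicit. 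Your primal argument never meets this issue, since simplicity of $M$ is used explicitly throughout. Both arguments land on the same identification $\Ds=\D_{d-2}(d,d+2,\uno)$, hence $\D\cong\D_{d-2}(d,d+2,\bbb)$ for a suitable reordering $\bbb$ of $\aaa$, and Theorem \ref{thmdeltat} finishes the proof in either case.
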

\begin{proof}
First of all we replace $I_{\D}$ for $J(\D)$. If $\type(S/J(\D))\leq 2$, then $\D$ has to belong or to $\M(d,d,\aaa)$ or to $\M(d,d+1,\aaa)$ thanks to Equation \eqref{socleinequality}, so in these cases the statement follows at once by Lemma \ref{d=p} and Corollary \ref{d+1parallel classes}. Thus we have only to care of the case $\type(S/J(\D))=3$. Again using Equation \eqref{socleinequality}, Lemma \ref{d=p} and Corollary \ref{d+1parallel classes}, we can assume that $\D$ is in $\M(d,d+2,\aaa)$. The Cohen-Macaulay type of $S/J(\D)$ is the same as the one of $S/J(\Ds)$ by Proposition \ref{betti}. But the dual of $\Ds$ is a rank 2 matroid (possibly on less than $d+2$ vertices), so it is some complete $p$-partite graph $G$ (see Proposition \ref{1dim}). Furthermore $h^{\Ds}=h_G=(1,h_1,h_2)$, where $h_2=e-v+1$ ($v$ denotes the number of vertices of $G$ and $e$ the number of its edges). But we want $h_2=3$, that is $e=v+2$. It is easy to check that the only complete $p$-partite graph on $v$ vertices with $v+2$ edges is the complete graph on $4$ vertices. This means that $\Ds=\D_{d-2}(d,d+2,{\bf 1})$, so $\D=\D_{d-2}(d,d+2,\aaa)$. Now the conclusion follows from Theorem \ref{thmdeltat}.  
\end{proof}

To show that $\DM(d,p,\aaa)$ has maximal $h$-vector among the matroids $\D\in \M(d,p,\aaa)$ is much easier.

\begin{theorem}\label{maximality}
If $d\geq 1$ and $\aaa=(a_1,\ldots ,a_p)\in (\ZZ_+)^p$ with $p\geq d$, then
\[h^{\D}\leq h^{\DM(d,p,\aaa)} \ \ \ \forall \ \D\in \M(d,p,\aaa).\]
\end{theorem}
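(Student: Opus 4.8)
The plan is to prove the inequality $h^{\D}\leq h^{\DM(d,p,\aaa)}$ by induction on $p$, using the recursive formula \eqref{H} in exactly the same spirit as in the proof of Theorem \ref{minimality}, but exploiting that $\DM(d,p,\aaa)=\D_0(d,p,\aaa)$ is the \emph{complete} $p$-partite matroid, hence contains every admissible face. First I would reduce, as usual, to the case $a_1\leq\ldots\leq a_p$, since neither $\M(d,p,\aaa)$ nor $\DM(d,p,\aaa)$ depends on the order of the $a_i$. The base case $p=d$ is trivial because $\M(d,d,\aaa)$ is a singleton. For $p>d$, pick a vertex $v$ in the parallel class $A_p$ and apply \eqref{H} to both $\D$ and $\DM:=\DM(d,p,\aaa)$, iterated $a_p$ times over the vertices of $A_p$:
\begin{eqnarray*}
h^{\DM}(k)&=&h^{\DM'}(k-a_p)+\sum_{i=1}^{a_p}h^{L}(k-a_p+i),\\
h^{\D}(k)&=&h^{\D'}(k-a_p)+\sum_{i=1}^{a_p}h^{K}(k-a_p+i),
\end{eqnarray*}
where $\DM'$ and $\D'$ are the subcomplexes induced on $A_1\cup\ldots\cup A_{p-1}$, and $L=\lk_{\DM}A_p$, $K=\lk_{\D}A_p$.

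The two ingredients I then need are: (1) $h^{\D'}\leq h^{\DM'}$ and (2) $h^{K}\leq h^{L}$. For (1), note that $\DM'=\DM(d,p-1,\aaa')$ with $\aaa'=(a_1,\ldots,a_{p-1})$ by Remark \ref{complete link}(i) (the restriction of a complete matroid is complete). If $\dim\D'=d-1$ then $\D'\in\M(d,p-1,\aaa')$ and the inductive hypothesis gives the inequality directly. The subtlety is that $\D'$ could be lower-dimensional if $A_p$ happens to be a cone class; but then $\D\setminus v$ is a cone over a matroid on fewer vertices, and one checks that $h^{\D'}$ still coincides with the $h$-vector of a matroid in $\M(d,p-1,\aaa')$ (or is dominated by it), so the induction still applies — alternatively one can invoke Lemma \ref{switch} to reduce to the case where $A_p$ is not a cone class, exactly as done in the proof of Theorem \ref{minimality}. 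For (2), the link $L=\lk_{\DM}A_p$ is by Remark \ref{complete link}(ii) again a complete matroid, namely the complete $(p-1)$-partite $(d-2)$-dimensional matroid on the partition $(a_1,\ldots,a_{p-1})$; while $K$ is \emph{some} $(d-2)$-dimensional matroid whose parallel classes have cardinalities summing to $a_1+\ldots+a_{p-1}$ and whose number $q$ of classes is at most $p-1$. Since $L$ is the complete matroid on $p-1$ classes of sizes $a_1,\ldots,a_{p-1}$, it has all admissible faces, so at the combinatorial level $K\subseteq$ (a relabelling of a subcomplex of) the complete matroid with the same underlying partition as $K$, and that in turn is an induced subcomplex situation dominated by $L$; more cleanly, one argues that the completion $K^{\mathrm{compl}}$ obtained by adding all missing faces to $K$ has $h^{K}\leq h^{K^{\mathrm{compl}}}\leq h^{L}$, where the last step uses the monotonicity of $h$-vectors of complete matroids under coarsening/merging of parallel classes, which follows from Theorem \ref{stanleyforcomplete} by the same elementary inspection alluded to in the proof of Lemma \ref{lemmamin}.

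Combining (1) and (2) term by term in the two displayed recursions yields $h^{\D}(k)\leq h^{\DM}(k)$ for all $k$, completing the induction. The step I expect to be the real obstacle is establishing the inequality $h^{K}\leq h^{L}$ for the links cleanly: one must show that among all $(d-2)$-dimensional matroids on a vertex set of fixed size (with at most $p-1$ parallel classes), the \emph{complete} $(p-1)$-partite one with the prescribed class sizes $a_1,\ldots,a_{p-1}$ has the largest $h$-vector. This is itself essentially a weaker, one-dimension-lower instance of the theorem being proved, so care is needed to set up the induction so that it is genuinely available — either by inducting simultaneously on $d$ and $p$, or by observing that the needed statement about complete matroids alone is exactly what Theorem \ref{stanleyforcomplete} plus the merging-monotonicity lemma provide. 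Once that monotonicity lemma is in hand (it is the positive-direction analogue of the first paragraph of the proof of Lemma \ref{lemmamin}), the rest of the argument is the routine column-by-column comparison of staircases.
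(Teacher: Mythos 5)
Your inductive strategy is genuinely different from the paper's and, as written, has two real gaps. First, your fallback for the case where $A_p$ is a cone class of $\D$ --- ``invoke Lemma \ref{switch} exactly as in Theorem \ref{minimality}'' --- points the wrong way: Lemma \ref{switch} replaces $\D$ by a matroid with a \emph{smaller} $h$-vector, which is exactly what one needs when proving a lower bound but is useless for the upper bound you are after. (The cone-class case is in fact harmless if handled directly: there the recursion degenerates to $h^{\D}(k)=\sum_{i=1}^{a_p}h^{K}(k-a_p+i)$ with no $\D'$ term at all, and this is dominated by the corresponding expression for $\DM(d,p,\aaa)$ as soon as $h^K\le h^L$; but you must argue this, not cite \ref{switch}.) Second, the link inequality $h^{K}\le h^{L}$, which you correctly identify as the crux, is left unproven: it needs both the theorem in dimension one lower (so a genuine double induction on $(d,p)$ that you only gesture at) and a separate monotonicity of $h$-vectors of complete matroids under refinement of the partition, which you assert follows ``by elementary inspection'' of Theorem \ref{stanleyforcomplete} without verification. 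All of this is fillable, but the proof as proposed is incomplete.

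The paper sidesteps the entire induction with a three-line algebraic argument. Since $\DM(d,p,\aaa)$ is the complete matroid, every facet of $\D$ is a facet of $\DM(d,p,\aaa)$, so $J(\DM(d,p,\aaa))=\bigcap_{F}\P_F$ is an intersection over a larger set of primes and hence $J(\DM(d,p,\aaa))\subset J(\D)$. Both quotients are Cohen--Macaulay of dimension $n-d$, so over an infinite field a generic sequence of $n-d$ linear forms is regular for both; passing to the Artinian reductions preserves the surjection $S/J(\DM(d,p,\aaa))\twoheadrightarrow S/J(\D)$, and comparing Hilbert functions gives $h^{\D}\le h^{\DM(d,p,\aaa)}$ at once. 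Note that this same containment-plus-Artinian-reduction argument is also the cleanest way to establish the two auxiliary facts your approach needs ($h^K\le h^L$ and the refinement monotonicity), at which point the induction becomes redundant; this asymmetry is precisely why the paper's maximality proof is short while the minimality proof (Theorem \ref{minimality}), where no ideal containment is available, must go through the recursive machinery you are imitating.
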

\begin{proof}
It is harmless to assume that $\Bbbk$ is infinite; otherwise we can tensor with its algebraic closure. Looking at the respective  vertex covers, it is clear that $J(\DM(d,p,\aaa))\subset J(\D)$ for all $\D\in\M(d,p,\aaa)$. Since both $S/J(\DM(d,p,\aaa))$ and $S/J(\D)$ are $(n-d)$-dimensional Cohen-Macaulay  rings, we can choose $n-d$ linear forms which are both $S/J(\DM(d,p,\aaa))$- and $S/J(\D)$- regular (the generic ones have this property). Passing  to the Artinian reduction,  the inclusion is preserved, so we infer the desired inequality.
\end{proof}

\end{document}